\documentclass[11pt]{article}

\usepackage{enumerate,xspace}
\usepackage{amsmath,amssymb,wasysym}
\usepackage[all]{xy}
\usepackage{proof}
\usepackage[svgnames]{xcolor}
\usepackage{pict2e} 
\usepackage{tikz}
\usepackage{stmaryrd}
\usepackage{mathtools}
\usepackage{latexsym}
\usepackage{dsfont}
\usepackage{multicol}
\usepackage{cmll}
\usepackage{multirow}
\usepackage{longtable}
\usepackage[sort,nocompress]{cite}

\usepackage{todonotes}

\usepackage{lscape}
\usepackage{array}

\usepackage{hyperref} 
\hypersetup{
    colorlinks,
    citecolor=red,
    filecolor=red,
    linkcolor=blue,
    urlcolor=red
}

\newtheorem{observation}{Remark}[section]
\newtheorem{lemma}[observation]{Lemma}  
\newtheorem{theorem}[observation]{Theorem}
\newtheorem{definition}[observation]{Definition}
\newtheorem{example}[observation]{Example}
\newtheorem{remark}[observation]{Remark}

\newtheorem{proposition}[observation]{Proposition} 
\newtheorem{corollary}[observation]{Corollary}

\newcommand{\wand}{\ensuremath{
  \mathrel{\vbox{\offinterlineskip\ialign{
    \hfil##\hfil\cr
    $\star$\cr
    \noalign{\kern-1ex}
    $\vert$\cr
}}}}}

\makeatletter

\newdimen\w@dth

\def\setw@dth#1#2{\setbox\z@\hbox{\scriptsize $#1$}\w@dth=\wd\z@
\setbox\@ne\hbox{\scriptsize $#2$}\ifnum\w@dth<\wd\@ne \w@dth=\wd\@ne \fi
\advance\w@dth by 1.2em}

\def\t@^#1_#2{\allowbreak\def\n@one{#1}\def\n@two{#2}\mathrel
{\setw@dth{#1}{#2}
\mathop{\hbox to \w@dth{\rightarrowfill}}\limits
\ifx\n@one\empty\else ^{\box\z@}\fi
\ifx\n@two\empty\else _{\box\@ne}\fi}}
\def\t@@^#1{\@ifnextchar_ {\t@^{#1}}{\t@^{#1}_{}}}

\def\t@left^#1_#2{\def\n@one{#1}\def\n@two{#2}\mathrel{\setw@dth{#1}{#2}
\mathop{\hbox to \w@dth{\leftarrowfill}}\limits
\ifx\n@one\empty\else ^{\box\z@}\fi
\ifx\n@two\empty\else _{\box\@ne}\fi}}
\def\t@@left^#1{\@ifnextchar_ {\t@left^{#1}}{\t@left^{#1}_{}}}

\def\two@^#1_#2{\def\n@one{#1}\def\n@two{#2}\mathrel{\setw@dth{#1}{#2}
\mathop{\vcenter{\hbox to \w@dth{\rightarrowfill}\kern-1.7ex
                 \hbox to \w@dth{\rightarrowfill}}%
       }\limits
\ifx\n@one\empty\else ^{\box\z@}\fi
\ifx\n@two\empty\else _{\box\@ne}\fi}}
\def\tw@@^#1{\@ifnextchar_ {\two@^{#1}}{\two@^{#1}_{}}}

\def\tofr@^#1_#2{\def\n@one{#1}\def\n@two{#2}\mathrel{\setw@dth{#1}{#2}
\mathop{\vcenter{\hbox to \w@dth{\rightarrowfill}\kern-1.7ex
                 \hbox to \w@dth{\leftarrowfill}}%
       }\limits
\ifx\n@one\empty\else ^{\box\z@}\fi
\ifx\n@two\empty\else _{\box\@ne}\fi}}
\def\t@fr@^#1{\@ifnextchar_ {\tofr@^{#1}}{\tofr@^{#1}_{}}}

\newdimen\W@dth
\def\setW@dth#1#2{\setbox\z@\hbox{$#1$}\W@dth=\wd\z@
\setbox\@ne\hbox{$#2$}\ifnum\W@dth<\wd\@ne \W@dth=\wd\@ne \fi
\advance\W@dth by 1.2em}

\def\T@^#1_#2{\allowbreak\def\N@one{#1}\def\N@two{#2}\mathrel
{\setW@dth{#1}{#2}
\mathop{\hbox to \W@dth{\rightarrowfill}}\limits
\ifx\N@one\empty\else ^{\box\z@}\fi
\ifx\N@two\empty\else _{\box\@ne}\fi}}
\def\T@@^#1{\@ifnextchar_ {\T@^{#1}}{\T@^{#1}_{}}}

\def\T@left^#1_#2{\def\N@one{#1}\def\N@two{#2}\mathrel{\setW@dth{#1}{#2}
\mathop{\hbox to \W@dth{\leftarrowfill}}\limits
\ifx\N@one\empty\else ^{\box\z@}\fi
\ifx\N@two\empty\else _{\box\@ne}\fi}}
\def\T@@left^#1{\@ifnextchar_ {\T@left^{#1}}{\T@left^{#1}_{}}}

\def\Tofr@^#1_#2{\def\N@one{#1}\def\N@two{#2}\mathrel{\setW@dth{#1}{#2}
\mathop{\vcenter{\hbox to \W@dth{\rightarrowfill}\kern-1.7ex
                 \hbox to \W@dth{\leftarrowfill}}%
       }\limits
\ifx\N@one\empty\else ^{\box\z@}\fi
\ifx\N@two\empty\else _{\box\@ne}\fi}}
\def\T@fr@^#1{\@ifnextchar_ {\Tofr@^{#1}}{\Tofr@^{#1}_{}}}

\def\Two@^#1_#2{\def\N@one{#1}\def\N@two{#2}\mathrel{\setW@dth{#1}{#2}
\mathop{\vcenter{\hbox to \W@dth{\rightarrowfill}\kern-1.7ex
                 \hbox to \W@dth{\rightarrowfill}}%
       }\limits
\ifx\N@one\empty\else ^{\box\z@}\fi
\ifx\N@two\empty\else _{\box\@ne}\fi}}
\def\Tw@@^#1{\@ifnextchar_ {\Two@^{#1}}{\Two@^{#1}_{}}}

\def\to{\@ifnextchar^ {\t@@}{\t@@^{}}}
\def\from{\@ifnextchar^ {\t@@left}{\t@@left^{}}}
\def\tofro{\@ifnextchar^ {\t@fr@}{\t@fr@^{}}}
\def\To{\@ifnextchar^ {\T@@}{\T@@^{}}}
\def\From{\@ifnextchar^ {\T@@left}{\T@@left^{}}}
\def\Two{\@ifnextchar^ {\Tw@@}{\Tw@@^{}}}
\def\Tofro{\@ifnextchar^ {\T@fr@}{\T@fr@^{}}}

\makeatother

\title{Derivations as Algebras}
\author{Jean-Simon Pacaud Lemay and Chiara Sava}
\date{}

\begin{document}
\allowdisplaybreaks
\maketitle

\begin{abstract} Differential categories provide the categorical foundations for the algebraic approaches to differentiation. They have been successful in formalizing various important concepts related to differentiation, such as, in particular, derivations. In this paper, we show that the differential modality of a differential category lifts to a monad on the arrow category and, moreover, that the algebras of this monad are precisely derivations. Furthermore, in the presence of finite biproducts, the differential modality in fact lifts to a differential modality on the arrow category. In other words, the arrow category of a differential category is again a differential category. As a consequence, derivations also form a tangent category, and derivations on free algebras form a cartesian differential category. 
\end{abstract}

 \noindent \small \textbf{Acknowledgements.} For this research, Lemay was funded by an ARC DECRA award (\# DE230100303) and this material is based upon work supported by the AFOSR under award number FA9550-24-1-0008; while Sava was supported by the Charles University Grant Agency (GA UK) project n° 222923 and by the Primus grant PRIMUS/23/SCI/006. The authors would also like to thank the Australian Category Theory group at Macquarie University who helped fund Sava with the support of a Scott Russell Johnson Postgraduate Scholarship. 

 \tableofcontents

 \newpage 

\section{Introduction}

Differential categories, introduced by Blute, Cockett, and Seely in \cite{blute2006differential}, provide a powerful categorical framework for the algebraic foundations of differentiation, as well as for the categorical semantics of differential linear logic \cite{ehrhard2017introduction}. At their core, differential categories isolate and axiomatize the essential structural properties of differentiation in a way that is both conceptually clean and mathematically flexible. Concretely, a differential category (Def \ref{def:diffcat}) is a symmetric monoidal category equipped with a differential modality: a monad $\mathsf{S}$ together with a deriving transformation $\mathsf{d}$, whose axioms categorify fundamental identities of differential calculus, such as the chain rule and the Leibniz rule. This structure admits an intuitive insight: $\mathsf{S}(A)$ may be viewed as an algebra of differentiable functions with input in $A$, while $\mathsf{d}$ plays the role of an abstract differential operator sending functions to their derivatives. Differential categories encompass a rich collection of meaningful and well-studied examples which capture important models of differentiation. These include polynomial differentiation \cite{blute2006differential}, where $\mathsf{S}=\mathsf{Sym}$ is the free symmetric algebra monad (Ex \ref{ex:sym}); smooth functions \cite{cruttwell2019integral}, where $\mathsf{S}=\mathsf{S}^\infty$ is the free $\mathcal{C}^\infty$-ring monad (Ex \ref{ex:s-inf}); as well as more exotic examples from computer science, such as differentiation over finiteness spaces or Köthe spaces \cite{ehrhard2017introduction}.

\smallskip

The theory of differential categories has been quite successful in formalizing various key differentiation related concept such as, in particular, \textit{derivations}. In classical algebra, a derivation is the generalization of the differential operator which, recall, is a linear operator from an algebra to a module which satisfies the Leibniz rule. Derivations are a  fundamental and ubiquitous concept, with deep applications across algebra, differential geometry, algebraic geometry, and beyond. 

\smallskip

In \cite{blute2015derivations}, inspired by earlier work on Kähler differentials \cite{blute2011kahler}, Blute, Lucyshyn-Wright, and O’Neill introduced a categorical generalization of derivations within a differential category, defining the notion of an $\mathsf{S}$-derivation relative to a differential modality $\mathsf{S}$. An $\mathsf{S}$-derivation (Def \ref{def:S-der}) is a map from an $\mathsf{S}$-algebra (in the monad sense) to a module, axiomatized not by the Leibniz rule, but instead by the chain rule. Remarkably, every $\mathsf{S}$-derivation automatically satisfies the Leibniz rule as a consequence, and hence recovers the classical notion of derivation. However, this is much more than just a reformulation: $\mathsf{S}$-derivations are precisely those derivations that are compatible with the class of differentiable functions encoded by the differential modality $\mathsf{S}$. For example, $\mathsf{Sym}$-derivation correspond to ordinary derivations, since they satisfy a chain rule with polynomials (Ex \ref{ex:der-sym}), while $\mathsf{S}^\infty$-derivations correspond to $\mathcal{C}^\infty$-derivations \cite{dubuc19841,joyce2011introduction}, which satisfy a chain rule with real smooth functions (Ex \ref{ex:der-smooth}). 

\smallskip

A substantial body of classical results and concepts about derivations admits meaningful generalizations to $\mathsf{S}$-derivations. For example, derivations can be characterized as algebra morphisms into a suitable semi-direct product of an algebra and a module; an analogous statement holds for $\mathsf{S}$-derivations \cite[Prop 5.21]{blute2015derivations}. Likewise, the classical construction of the universal derivation via Kähler differentials extends, under mild colimit assumptions, to yield universal $\mathsf{S}$-derivations for $\mathsf{S}$-algebras \cite[Thm 5.23]{blute2015derivations}. This in turn enables the development of de Rham cohomology internal to differential categories \cite{o2017smoothness}. In another direction, $\mathsf{S}$-algebras equipped with $\mathsf{S}$-derivations to themselves generalize differential algebras in a differential category \cite{lemay2019differential}. 

\smallskip

The main contribution of this paper is a new and conceptually striking characterization of $\mathsf{S}$-derivations: we show that they are precisely the algebras of a monad on the arrow category. More specifically, we first show that any differential modality $\mathsf{S}$ lifts canonically to a monad $\overline{\mathsf{S}}$ on the arrow category (Prop \ref{prop:monad}). The functor $\overline{\mathsf{S}}$ is constructed directly from the deriving transformation $\mathsf{d}$, and its monad structure is well defined by the chain rule axiom. We then prove that the algebras of this lifted monad $\overline{\mathsf{S}}$ are exactly $\mathsf{S}$-derivations.

\smallskip

This characterization is useful for several reasons. It holds in any differential category, without requiring additional structure such as biproducts or colimits, in contrast to other approaches \cite{blute2015derivations}. Moreover, it offers a genuinely new perspective on the well established notion of derivations, which is only visible through the lens of differential categories.  When biproducts are present, this perspective becomes even richer. We show that the arrow category admits a special monoidal structure (Lemma \ref{lem:arrow-monoidal}) which makes $\overline{\mathsf{S}}$ into a differential modality (Thm \ref{thm:arrow-diff-cat}), which is well-defined this time thanks to the Leibniz rule. As a consequence, the arrow category of a differential category with biproducts is itself a differential category, providing a new and systematic source of examples. In this setting, commutative monoids in the arrow category correspond exactly to classical derivations (Prop \ref{prop:mon=der-arrow}), reinforcing the tight connection with our monadic characterization.

\smallskip

These results have meaningful implications for other related categorical frameworks of differentiation. Indeed, differential categories are closely linked to cartesian differential categories \cite{blute2009cartesian}, which provide the categorical foundations for multivariable differential calculus, and tangent categories \cite{cockett2014differential}, which provide the categorical foundations for differential geometry. For a differential modality $\mathsf{S}$, its Eilenberg-Moore category (so its category of $\mathsf{S}$-algebras) is a tangent category \cite{cockett_et_al:LIPIcs:2020:11660}, while the opposite of its Kleisli category (so its category of free $\mathsf{S}$-algebras) is a cartesian differential category. As an application of our main result, we then get that $\mathsf{S}$-derivations form a tangent category (Cor \ref{cor:tan}), which means that ordinary derivations and $\mathcal{C}^\infty$-derivations each respectively form tangent categories (Ex \ref{ex:tan-poly} \& \ref{ex:tan-smooth}), and that $\mathsf{S}$-derivations on free $\mathsf{S}$-algebras form a cartesian differential category (Cor \ref{cor:CDC}). 

\smallskip

\textbf{Outline:} Sec \ref{sec:diffcats} and Sec \ref{sec:derivations} are background sections where we review differential categories and derivations. In Sec \ref{sec:monad} we show that a differential modality $\mathsf{S}$ lifts to a monad $\overline{\mathsf{S}}$ on the arrow category (Prop \ref{prop:monad}). Then, in Sec \ref{sec:deri-alg}, we show that the algebras of this monad $\overline{\mathsf{S}}$ correspond precisely to $\mathsf{S}$-derivations (Thm \ref{thm:deri=alg}), which is the main result of this paper. In Sec \ref{sec:der-monoid} we add biproducts to the story to define a monoidal structure on the arrow category (Lemma \ref{lem:arrow-monoidal}) and show that commutative monoids, in this case, correspond precisely to derivations in the classical sense (Prop \ref{prop:mon=der-arrow}). From this, we then show in Sec \ref{sec:diff-mod} that in the presence of biproducts, $\overline{\mathsf{S}}$ is in fact a differential modality, and thus the arrow category is itself a differential category (Thm \ref{thm:arrow-diff-cat}). Then in Sec \ref{sec:tan}, we briefly explain how we obtain a tangent category and a cartesian differential category of $\mathsf{S}$-derivations. 

\section{Differential Categories}\label{sec:diffcats}

In this background, mostly to introduce notation, we briefly review differential categories. For a more in-depth introduction on differential categories, we refer the reader to \cite{blute2006differential,Blute2019,blute2015derivations,lemay2019differential,o2017smoothness,blute2011kahler}.

\smallskip

\begin{remark} Before we begin we must stress an important point regarding terminology. In this paper, following recent conventions, by a differential category/modality, we mean it in the dual sense of \cite{blute2006differential,Blute2019}, so what was called a codifferential category/modality in \cite{blute2015derivations,lemay2019differential,o2017smoothness,blute2011kahler} for example. The original name ``differential categories" in \cite{blute2006differential} came from the fact that they were used for the categorical semantics of differential linear logic \cite{ehrhard2017introduction}, which meant they dealt with differential structure on \textit{co}monads and \textit{co}algebras. Thus as is customary in category theory, the dual notion ``codifferential categories" is then the proper setting to work with differential structure on monads and algebras. So the classical algebra notions of say derivations and differential algebras were then formalized in a codifferential category \cite{blute2015derivations,blute2011kahler,lemay2019differential}. This caused a bit of awkwardness, especially when trying to explain the story of differential categories to algebraist. Thus there has been a recent push to refer to codifferential categories instead as ``algebraic" differential categories or simply differential categories. In this paper we take this approach. 
\end{remark}

The underlying structure of a differential category is a symmetric monoidal category. We will denote an arbitrary category as $\mathbb{X}$, objects will be denoted using capital letters $A$, $B$, $C$ etc., homsets will be denoted as $\mathbb{X}(A,B)$ and maps will be denoted by minuscule letters $f,g,h, etc. \in \mathbb{X}(A,B)$. Arbitrary maps will be denoted using an arrow $f: A \to B$, identity maps as $1_A: A \to A$, and for composition we will use the standard notation $\circ$ (as opposed to diagrammatic notation which is often used in differential category literature, such as in \cite{blute2006differential,Blute2019,blute2015derivations,lemay2019differential,blute2011kahler}). Following the convention used in most of the literature on differential categories, we will work in a symmetric \emph{strict} monoidal category, so the associativity and unit isomorphisms for the monoidal product are equalities. For an arbitrary symmetric (strict) monoidal category $\mathbb{X}$, we denote its monoidal product as $\otimes$, the monoidal unit as $I$, and the natural symmetry isomorphism as $\sigma_{A, B}: A \otimes B \xrightarrow{\cong} B \otimes A$. Strictness allows us to write $A_1 \otimes A_2 \otimes \hdots \otimes A_n$ and $A \otimes I = A = I \otimes A$.

\smallskip

The underlying structure of a differential modality is a monad. Recall that a \textbf{monad} on a category $\mathbb{X}$ is a triple $(\mathsf{S}, \mu, \eta)$ consisting of an endofunctor $\mathsf{S}: \mathbb{X} \to \mathbb{X}$, a natural transformation $\mu_A: \mathsf{S}\mathsf{S}(A) \to \mathsf{S}(A)$, called the monad multiplication, and a natural transformation $\eta_A: A \to \mathsf{S}(A)$, called the monad unit, such that the following diagrams commute: 
\begin{equation}\begin{gathered}\label{diag:monad}\xymatrixrowsep{1.75pc}\xymatrixcolsep{5pc}\xymatrix{ 
        \mathsf{S}(A) \ar[r]^-{\mathsf{S}(\eta_A)} \ar[d]_-{\eta_{\mathsf{S}(A)}}   \ar@{=}[dr]^-{}& \mathsf{S} \mathsf{S}(A)  \ar[d]^-{\mu_A}  & \mathsf{S} \mathsf{S} \mathsf{S}(A) \ar[r]^-{\mathsf{S}(\mu_A)}  \ar[d]_-{\mu_{\mathsf{S}(A)}} & \mathsf{S}\mathsf{S}(A)  \ar[d]^-{\mu_A} \\
        \mathsf{S} \mathsf{S}(A) \ar[r]_-{\mu_A}  & \mathsf{S}(A)  & \mathsf{S} \mathsf{S}(A)  \ar[r]_-{\mu_A} & \mathsf{S}(A).  } \end{gathered}\end{equation}
        
        \smallskip
        
In a differential category, one asks that the free algebras of the monad be commutative monoids in a natural way. So recall that in a symmetric monoidal category $\mathbb{X}$, a \textbf{commutative monoid} is a triple $(A, \mathsf{m}, \mathsf{u})$ consisting of an object $A$, a map $\mathsf{m}: A \otimes A \to A$, called the multiplication, and a map $\mathsf{u}: I \to A$, called the unit, such that the following diagrams commute: 
\begin{equation}\begin{gathered}\label{diag:monoid}\xymatrixrowsep{1.75pc}\xymatrixcolsep{3pc}\xymatrix{A  \otimes  A  \otimes  A  \ar[r]^-{1_A \otimes \mathsf{m}} \ar[d]_-{\mathsf{m} \otimes 1_A}  & A  \otimes  A  \ar[d]^-{\mathsf{m}} & A \ar@{=}[dr] \ar[r]^-{1_A \otimes \mathsf{u}}\ar[d]_-{\mathsf{u} \otimes 1_A} & A  \otimes  A \ar[d]^-{\mathsf{m}} & A \otimes A  \ar[r]^-{\sigma_{A,A}}  \ar[dr]_-{\mathsf{m}} & A  \otimes  A \ar[d]^-{\mathsf{m}} \\
      A \otimes  A  \ar[r]_-{\mathsf{m}} &A  & A \otimes  A \ar[r]_-{\mathsf{m}}   & A  && A.} \end{gathered}\end{equation} 
We will also require that the monad multiplication be a monoid morphism. So recall that for commutative monoids $(A, \mathsf{m}, \mathsf{u})$ and $(A^\prime, \mathsf{m}^\prime, \mathsf{u}^\prime)$, a \textbf{monoid morphism} $f: (A, \mathsf{m}, \mathsf{u}) \to (A^\prime, \mathsf{m}^\prime, \mathsf{u}^\prime)$ between them is a map between the underlying objects $f: A \to A^\prime$, such that the following diagrams commute: 
\begin{equation}\begin{gathered}\label{diag:monoid-morph}\xymatrixrowsep{1.75pc}\xymatrixcolsep{5pc}\xymatrix{A \otimes A \ar[r]^-{f \otimes f}  \ar[d]_-{\mathsf{m}} & A^\prime \otimes A^\prime \ar[d]^-{\mathsf{m}^\prime} & I \ar[r]^-{\mathsf{u}} \ar[dr]_-{\mathsf{u}^\prime} & A \ar[d]^-{f}   \\
      A \ar[r]_-{f} & A^\prime & & A^\prime. }  \end{gathered}\end{equation}
We denote $\mathsf{CMON}[\mathbb{X}]$ to be category of commutative monoids of a symmetric monoidal category $\mathbb{X}$ and monoids morphisms between them. Then by an algebra modality, we mean a monad whose free algebras are naturally commutative monoids. 

\begin{definition} An \textbf{algebra modality} \cite[Dual of Def 1]{Blute2019} on symmetric monoidal category $\mathbb{X}$ is a quintuple $(\mathsf{S}, \mu, \eta, \mathsf{m}, \mathsf{u})$ consisting of a monad $(\mathsf{S}, \mu, \eta)$, a natural transformation $\mathsf{m}_A: \mathsf{S}(A) \otimes \mathsf{S}(A) \to \mathsf{S}(A)$, and a natural transformation $\mathsf{u}_A: I \to \mathsf{S}(A)$, such that for every object $A$: 
\begin{enumerate}[{\em (i)}]
\item $(\mathsf{S}(A), \mathsf{m}_A, \mathsf{u}_A)$ is a commutative monoid;
\item $\mu_A: (\mathsf{S}\mathsf{S}(A), \mathsf{m}_{\mathsf{S}(A)}, \mathsf{u}_{\mathsf{S}(A)}) \to (\mathsf{S}(A), \mathsf{m}_A, \mathsf{u}_A)$ is a monoid morphism. 
  \end{enumerate}
\end{definition}

It is worth mentioning that for an algebra modality, the naturality of $\mathsf{m}$ and $\mathsf{u}$ imply that for every map $f: A \to B$, $\mathsf{S}(f): (\mathsf{S}(A), \mathsf{m}_A, \mathsf{u}_A) \to (\mathsf{S}(B), \mathsf{m}_B, \mathsf{u}_B)$ is a monoid morphism. Therefore, the underlying endofunctor $\mathsf{S}$ factors through $\mathsf{CMON}[\mathbb{X}]$. In Sec \ref{sec:derivations} we will review how the algebras of an algebra modality are in fact also commutative monoids. 

\smallskip

A differential modality is an algebra modality that comes equipped with an extra natural transformation which behaves like a derivation. In order to express the Leibniz rule and the constant rule of differentiation, we need to be able to take the sum of maps and have zero maps. As such, we need to work in an additive symmetric monoidal category. Following the terminology in differential category literature \cite{blute2006differential,Blute2019}, by an \textbf{additive category} \cite[Def 3]{Blute2019} we mean a category $\mathbb{X}$ enriched over commutative monoids, that is, each homset $\mathbb{X}(A,B)$ is a commutative monoid, with binary operation $+$ and unit element $0: A \to B$, such that composition is a monoid morphism, that is, the following equalities holds for all suitable maps: 
\begin{align}
f \circ (g+h) = f\circ g + f \circ h && (g+h) \circ k = g \circ k + h \circ k && f \circ 0 = 0 && 0 \circ f = 0.
\end{align}
Note that here for an additive category, unlike in other references, we are not assuming the existence of biproducts or negatives. While negatives are not necessary for the story of this paper, we will add biproducts to our story later in Sec \ref{sec:deri-alg} and \ref{sec:diff-mod}.

\smallskip

An \textbf{additive symmetric monoidal category} \cite[Def 3]{Blute2019} is a symmetric monoidal category $\mathbb{X}$ which is also an additive category such that monoidal product preserves the additive structure, that is, the following equalities hold for all suitable maps: 
\begin{align}
f \otimes (g +h) = f \otimes g + f \otimes h && (g+h) \otimes k = g \otimes k + h \otimes k && f \otimes 0 = 0 && 0 \otimes f = 0.
\end{align}
With our additive structure, we are now in a position to properly define a differential modality and, with it, a differential category. 

\begin{definition}\label{def:diffcat} A \textbf{differential modality} \cite[Dual of Def 7]{Blute2019} on an additive symmetric monoidal category $\mathbb{X}$ is a sextuple $(\mathsf{S}, \mu, \eta, \mathsf{m}, \mathsf{u}, \mathsf{d})$ consisting of an algebra modality $(\mathsf{S}, \mu, \eta, \mathsf{m}, \mathsf{u})$ equipped with a natural transformation \[\mathsf{d}_A: \mathsf{S}(A) \to \mathsf{S}(A) \otimes A,\] called the \textbf{deriving transformation}, such that the following diagrams commute: 
  
  \begin{equation}\begin{gathered}\label{diag:deriving}
  \xymatrixrowsep{1.75pc}\xymatrixcolsep{2.75pc}\xymatrix{I \ar[r]^-{\mathsf{u}_A} \ar@/_/[dr]_-{0}  &\mathsf{S}(A)  \ar[d]^-{\mathsf{d}_A}  \ar@{}[dl]|(0.4){\text{\normalfont \textbf{[D.1]}}}  & \mathsf{S}(A) \otimes \mathsf{S}(A) \ar@{}[drr]|-{\text{\normalfont \textbf{[D.2]}}} \ar[rr]^-{(1_{\mathsf{S}(A)} \otimes \mathsf{d}_A) + (1_{\mathsf{S}(A)} \otimes \sigma_{A,\mathsf{S}(A)}) \circ (\mathsf{d}_A \otimes 1_{\mathsf{S}(A)})} \ar[d]_-{\mathsf{m}_A} && \mathsf{S}(A) \otimes \mathsf{S}(A) \otimes A \ar[d]^-{\mathsf{m}_A \otimes 1_A}   \\
  &  \mathsf{S}(A) \otimes A & \mathsf{S}(A) \ar[rr]_-{\mathsf{d}_A} && \mathsf{S}(A) \otimes A \\
 A \ar[r]^-{\eta_A} \ar@/_/[dr]_-{\mathsf{u}_A \otimes 1_A} & \mathsf{S}(A) \ar@{}[dl]|(0.4){\text{\normalfont \textbf{[D.3]}}}  \ar[d]^-{\mathsf{d}_A} & \mathsf{S}\mathsf{S}(A)\ar[rr]^-{\mu_A} \ar[d]_-{\mathsf{d}_{\mathsf{S}(A)}} \ar@{}[drr]|-{\text{\normalfont \textbf{[D.4]}}} && \mathsf{S}(A) \ar[d]^-{\mathsf{d}_A}   \\
& \mathsf{S}(A) \otimes A & \mathsf{S}\mathsf{S}(A) \otimes \mathsf{S}(A) \ar[r]_-{\mu_A \otimes \mathsf{d}_A} & \mathsf{S}(A) \otimes \mathsf{S}(A) \otimes A \ar[r]_-{\mathsf{m}_A \otimes 1_A} & \mathsf{S}(A) \otimes A \\
& \mathsf{S}(A)  \ar[d]_-{\mathsf{d}_A} \ar[rr]^-{\mathsf{d}_A} \ar@{}[drr]|-{\text{\normalfont \textbf{[D.5]}}} && \mathsf{S}(A) \otimes A \ar[d]^-{\mathsf{d}_A \otimes 1_A} \\
 & \mathsf{S}(A) \otimes A \ar[r]_-{\mathsf{d}_A \otimes 1_A} & \mathsf{S}(A) \otimes A \otimes A \ar[r]_-{1_{\mathsf{S}(A)} \otimes \sigma_{A,A}} & \mathsf{S}(A) \otimes A \otimes A. 
}
\end{gathered}\end{equation}
A \textbf{differential category} is an additive symmetric monoidal category equipped with a differential modality. 
\end{definition}

Let us give some brief intuitions about the axioms of a deriving transformation. For our differential modality, we can naively interpret $\mathsf{S}(A)$ as the object representing all ``smooth" functions from $A$ to $I$. The monad multiplication $\mu$ captures composition of these smooth functions, while the monad $\eta$ picks out the linear smooth functions and, in particular, the identity function. The monoid multiplication $\mathsf{m}$ says that we can multiply smooth functions together, while the monoid unit $\mathsf{u}$ picks out the constant functions. The deriving transformation $\mathsf{d}$ of course captures differentiating smooth functions, where the idea is that it sends a smooth function $f$ to its derivative $f^\prime \otimes \mathsf{d}(x)$. Then \textbf{[D.1]} is the constant rule which says that the derivative of a constant is zero; \textbf{[D.2]} is the famous Leibniz rule which tells us how to differentiate a product; \textbf{[D.3]} is the linear rule which says that the derivative of a linear function is a constant; \textbf{[D.4]} is the equally famous chain rule which tells us how to differentiation a composite of functions; and lastly \textbf{[D.5]} is the interchange rule which captures the symmetry of the mixed partial derivatives. This intuition about interpreting differential modalities in terms of smooth functions of some sort can be made precise in the context of differential linear logic -- we invite the curious reader to see \cite{ehrhard2017introduction}. It is worth noting that \textbf{[D.5]} was not included in the original definition \cite[Def 2.5]{blute2006differential} but has since been added to definition of deriving transformation -- see \cite[Sec 4]{Blute2019} for details. 

\smallskip

We now give our two main examples of differential categories, one that captures polynomial differentiation and one that captures differentiating real smooth functions. For other interesting examples of differential categories, we invite the reader to see \cite{Blute2019}.

\begin{example} \textbf{\textsf{Polynomials.}} \label{ex:sym} Let $\mathbb{K}$ be a field and let $\mathsf{VEC}_{\mathbb{K}}$ be the category of $\mathbb{K}$-vector spaces and $\mathbb{K}$-linear maps between them. Then $\mathsf{VEC}_{\mathbb{K}}$ is a differential category where symmetric algebras \cite[Chap 14, Sec 8]{lang2002algebra} induce a differential modality which captures differentiation polynomials. Indeed, recall that for a $\mathbb{K}$-vector space $V$, its symmetric algebra $\mathsf{Sym}(V)$ is the free commutative $\mathbb{K}$-algebra over $V$. If $X$ is a basis of $V$, then there is a canonical isomorphism of $\mathbb{K}$-algebras  $\mathsf{Sym}(V) \cong \mathbb{K}[X]$, where $\mathbb{K}[X]$ is the polynomial ring over $X$. We then obtain a differential modality $\mathsf{Sym}$ on $\mathsf{VEC}_{\mathbb{K}}$ where the monad structure corresponds to polynomial composition, the monoid structure corresponds to polynomial multiplication, and the deriving transformation (written in terms of polynomial rings) $\mathsf{d}_V: \mathbb{K}[X] \to \mathbb{K}[X] \otimes V$ captures differentiating polynomials by sending a polynomial $p \in \mathbb{K}[X]$ to the sum of its partial derivatives: 
\[ \mathsf{d}_V(p) = \sum \limits_{x \in X} \frac{\partial p}{\partial x} \otimes x. \]
 For full details on this differential modality, we invite the reader to see \cite[Sec 2.5.3]{blute2006differential}.
\end{example}

It is worth pointing out that the above example easily generalizes to the category of modules over a commutative (semi)ring \cite[Ex 1]{Blute2019}. In fact, in any additive symmetric monoidal category with enough colimits to build the free symmetric algebra is going to be a differential category \cite[Thm 6.1]{blute2015derivations}. For example, the category of sets and relations is a differential category in this manner \cite[Sec 2.5.1]{blute2006differential}. 

\begin{example}\label{ex:s-inf} \textbf{\textsf{Real smooth functions.}} Let $\mathbb{K} = \mathbb{R}$ the field of real numbers. Then $\mathsf{VEC}_{\mathbb{R}}$ admits another differential modality which this time captures differentiating smooth functions by instead considering $\mathcal{C}^\infty$-rings \cite[Chap 1]{moerdijk2013models}. Briefly, recall that a $\mathcal{C}^\infty$-ring is a commutative $\mathbb{R}$-algebra $A$ such that for every real smooth function $f: \mathbb{R}^n \to \mathbb{R}$ there is a function $\Phi_f: A^n \to A$ satisfying certain coherences with respect to composition and projections. The intuition here is that $\Phi_f$ can be interpreted as evaluating the smooth function $f$ with elements of $A$. The main class of examples of $\mathcal{C}^\infty$-rings comes from smooth manifolds \cite[Chap 2]{moerdijk2013models}, where, for a smooth manifold $M$, \[\mathcal{C}^\infty(M) = \lbrace f: M \to \mathbb{R} \vert~ \text{$f$ is a real smooth function} \rbrace\] is a $\mathcal{C}^\infty$-ring. In particular, for every $\mathbb{R}$-vector space $V$, there exits a free $\mathcal{C}^\infty$-ring $\mathsf{S}^\infty(V)$ over it \cite[Sec 4]{cruttwell2019integral}, where for $V = \mathbb{R}^n$ we have that $\mathsf{S}^\infty(\mathbb{R}^n) = \mathcal{C}^\infty(\mathbb{R}^n)$. In turn, this induces a differential modality $\mathsf{S}^\infty$ on $\mathsf{VEC}_{\mathbb{R}}$ where, as expected, the monad structure corresponds to composing smooth functions, the monoid structure corresponds to multiplying smooth functions, and the deriving transformation captures differentiating smooth functions. In the special case of $V = \mathbb{R}^n$, the deriving transformation $\mathsf{d}_{\mathbb{R}^n}: \mathcal{C}^\infty(\mathbb{R}^n) \to \mathcal{C}^\infty(\mathbb{R}^n) \otimes \mathbb{R}^n$ sends a smooth function $f: \mathbb{R}^n \to \mathbb{R}$ to the sum of its partial derivatives: 
\[ \mathsf{d}_{\mathbb{R}^n}(f) = \sum \limits_{i=1}^{n} \frac{\partial f}{\partial x_i} \otimes \vec{e_i} \]
where $\vec{e_i} \in \mathbb{R}^n$ is the canonical basis vector with $1$ in the $i$-th coordinate and zeroes everywhere else. For full details on this differential modality, we invite the reader to see \cite[Sec 5]{cruttwell2019integral}. 
\end{example}

\section{Derivations}\label{sec:derivations}

In this section we briefly review derivations in a differential category. For a more detailed introduction, we invite the reader to see \cite{blute2015derivations,o2017smoothness,lemay2019differential,blute2011kahler}.

\smallskip

In any additive symmetric monoidal category $\mathbb{X}$, one can already generalize the notion of a derivation from classical algebra to derivations for commutative monoids \cite[Def 4.1]{blute2011kahler}. To do so, we must first review modules. Recall that for a commutative monoid $(A, \mathsf{m}, \mathsf{u})$, an \textbf{$(A, \mathsf{m}, \mathsf{u})$-module} is a pair $(M, \alpha)$ consisting of an object $A$ and a map ${\alpha: A \otimes M \to M}$, called the action, such that the following diagrams commute: 
\begin{equation}\begin{gathered}\label{diag:module}\xymatrixrowsep{1.75pc}\xymatrixcolsep{5pc}\xymatrix{A  \otimes  A  \otimes  M  \ar[r]^-{1_A \otimes \alpha} \ar[d]_-{\mathsf{m} \otimes 1_M}  & A  \otimes  M \ar[d]^-{\alpha} & M \ar@{=}[dr] \ar[r]^-{\mathsf{u} \otimes 1_M} & A  \otimes  M \ar[d]^-{\alpha}  \\
      A \otimes  M  \ar[r]_-{\alpha} & M  &  & M.} \end{gathered}\end{equation} 
Then a \textbf{derivation} of $(A, \mathsf{m}, \mathsf{u})$ evaluated in $(M, \alpha)$ is a map $\mathsf{D}: A \to M$ such that the following diagrams commute: 
\begin{equation}\begin{gathered}\label{diag:derivation-normal}
 \xymatrixrowsep{1.75pc}\xymatrixcolsep{2.75pc}\xymatrix{I \ar[r]^-{\mathsf{u}} \ar[dr]_-{0}  & A  \ar[d]^-{\mathsf{D}}  & A \otimes A \ar[rr]^-{(1_{A} \otimes \mathsf{D}) + \sigma_{M,A} \circ (\mathsf{D} \otimes 1_A)} \ar[d]_-{\mathsf{m}} && A \otimes M \ar[d]^-{\alpha}   \\
  & M & A \ar[rr]_-{\mathsf{D}} && M.
}
\end{gathered}\end{equation}
As a shorthand, we will denote derivations as $\mathsf{D}: (A, \mathsf{m}, \mathsf{u}) \to (M,\alpha)$. Now the right diagram above is the Leibniz rule, while the diagram on the left is the constant rule which says the derivative of the units is zero. Readers familiar with derivations from classical algebra will note that the diagram on the left is usually not part of the definition and can be deduced \cite[Chap 19, Sec 3]{lang2002algebra}. However, the proof requires negatives, and since we are not assuming negatives, it must be included in the definition. 

Now for derivations $\mathsf{D}: (A, \mathsf{m}, \mathsf{u}) \to (M,\alpha)$ and $\mathsf{D}^\prime: (A^\prime, \mathsf{m}^\prime, \mathsf{u}^\prime) \to (M^\prime,\alpha^\prime)$, a \textbf{derivation morphism} is a pair $(f,g): \left( \mathsf{D}: (A, \mathsf{m}, \mathsf{u}) \to (M,\alpha)\right) \to \left(\mathsf{D}^\prime: (A^\prime, \mathsf{m}^\prime, \mathsf{u}^\prime) \to (M^\prime,\alpha^\prime) \right)$ where $f: (A, \mathsf{m}, \mathsf{u}) \to (A^\prime, \mathsf{m}^\prime, \mathsf{u}^\prime)$ is a monoid morphism and $g: M \to M^\prime$ is a map such that the following diagrams commute: 

\begin{equation}\begin{gathered}\label{diag:derivation-morph}\xymatrixrowsep{1.75pc}\xymatrixcolsep{5pc}\xymatrix{A \otimes M \ar[r]^-{f \otimes g}  \ar[d]_-{\alpha} & A^\prime \otimes M^\prime \ar[d]^-{\alpha^\prime} & A \ar[r]^-{f} \ar[d]_-{\mathsf{D}} & A^\prime \ar[d]^-{\mathsf{D}^\prime}   \\
      M \ar[r]_-{g} & M^\prime & M \ar[r]_-{g} & M^\prime. }  \end{gathered}\end{equation}
Note that the diagram on the left essentially says that $g$ is a module morphism. We denote by $\mathsf{DER}[\mathbb{X}]$ the category of derivations in our additive symmetric monoidal category $\mathbb{X}$ and derivations morphisms between them. 

\smallskip

In a differential category, we can actually improve upon the previous definition and define derivations relative to a differential modality. As such, instead of simply considering derivations for commutative monoids, we may consider derivations for algebras (in the monad sense) of our differential modality.

\smallskip

First recall that for a monad $(\mathsf{S}, \mu, \eta)$, an \textbf{$\mathsf{S}$-algebra} is a pair $(A, \nu)$ consisting of an object $A$ and a map ${\nu: \mathsf{S}(A) \to A}$, called the $\mathsf{S}$-algebra structure map, such that the following diagrams commute: 
\begin{equation}\begin{gathered}\label{diag:S-alg}\xymatrixrowsep{1.75pc}\xymatrixcolsep{5pc}\xymatrix{ 
        A \ar[r]^-{\eta_A} \ar@{=}[dr]^-{}& \mathsf{S}(A)  \ar[d]^-{\nu}  & \mathsf{S}  \mathsf{S}(A) \ar[r]^-{\mathsf{S}(\nu)}  \ar[d]_-{\mu_A} & \mathsf{S}(A)  \ar[d]^-{\nu} \\
       & A & \mathsf{S}(A)  \ar[r]_-{\nu} & A. } \end{gathered}\end{equation}
Also recall that for $\mathsf{S}$-algebras $(A, \nu)$ and $(A^\prime, \nu^\prime)$, an $\mathsf{S}$-algebra morphism $f: (A, \nu) \to (A^\prime, \nu^\prime)$ is a map between the underlying objects $f: A \to A^\prime$ such that the following diagram commutes: 
\begin{equation}\begin{gathered}\label{diag:SAlg-morph}\xymatrixrowsep{1.75pc}\xymatrixcolsep{5pc}\xymatrix{\mathsf{S}(A) \ar[r]^-{\mathsf{S}(f)}  \ar[d]_-{\nu} & \mathsf{S}(A^\prime)  \ar[d]^-{\nu^\prime} \\
      A \ar[r]_-{f} & A^\prime.  }  \end{gathered}\end{equation}
For a monad $(\mathsf{S}, \mu, \eta)$, we denote by $\mathsf{S}\text{-}\mathsf{ALG}$ the category of $\mathsf{S}$-algebras and $\mathsf{S}$-algebra morphisms between them, also often called the Eilenberg-Moore category of the monad $(\mathsf{S}, \mu, \eta)$. 

\smallskip

It turns out that for an algebra modality $(\mathsf{S}, \mu, \eta, \mathsf{m}, \mathsf{u})$, the $\mathsf{S}$-algebras are canonically commutative monoids \cite[Thm 3.12]{blute2015derivations}. Indeed, given an $\mathsf{S}$-algebra $(A, \nu)$, define the maps $\mathsf{m}^\nu: A \otimes A \to A$ and $\mathsf{u}^\nu: I \to A$ as the following composites: 
\begin{align}\label{def:Salg-m-u}
\xymatrixcolsep{3pc}\xymatrix{ \mathsf{m}^\nu: A \otimes A \ar[r]^-{\eta_A \otimes \eta_A} & \mathsf{S}(A) \otimes  \mathsf{S}(A) \ar[r]^-{\mathsf{m}_A} & \mathsf{S}(A) \ar[r]^-{\nu} & A 
  }  && \xymatrixcolsep{3pc}\xymatrix{ \mathsf{u}^\nu: I \ar[r]^-{\mathsf{u}_A} & \mathsf{S}(A) \ar[r]^-{\nu} & A. 
  }
\end{align}
Then $(A, \mathsf{m}^\nu, \mathsf{u}^\nu)$ is a commutative monoid. Observe that applying this construction to a free $\mathsf{S}$-algebra $(\mathsf{S}(A), \mu_A)$ results precisely to the natural monoid structure from that algebra modality, that is, $\mathsf{m}^{\mu_A} = \mathsf{m}_A$ and $\mathsf{u}^{\mu_A} = \mathsf{u}_A$. Moreover, if ${f: (A, \nu) \to (A^\prime, \nu^\prime)}$ is an $\mathsf{S}$-algebra morphism, then $f: (A, \mathsf{m}^\nu, \mathsf{u}^\nu) \to (A^\prime, \mathsf{m}^{\nu^\prime}, \mathsf{u}^{\nu^\prime})$ is a monoid morphism. 

\smallskip

To define derivations relative to a differential modality, we must also consider modules of our $\mathsf{S}$-algebras. So for an $\mathsf{S}$-algebra $(A, \nu)$, by an $(A, \nu)$-module, we simply mean an $(A, \mathsf{m}^\nu, \mathsf{u}^\nu)$-module. 

\begin{definition}\label{def:S-der} Let $(\mathsf{S}, \mu, \eta, \mathsf{m}, \mathsf{u}, \mathsf{d})$ be a differential modality, $(A,\nu)$ be an $\mathsf{S}$-algebra, and $(M,\alpha)$ be an $(A, \nu)$-module. An \textbf{$\mathsf{S}$-derivation} \cite[Def 5.12]{blute2015derivations} of $(A,\nu)$ evaluated in $(M,\alpha)$ is a map $\mathsf{D}: A \to M$ such that the following diagram commutes: 
  \begin{equation}\begin{gathered}\label{diag:derivation}
  \xymatrixrowsep{1.75pc}\xymatrixcolsep{5pc}\xymatrix{
\mathsf{S}(A)\ar[rr]^-{\nu} \ar[d]_-{\mathsf{d}_A}  && A \ar[d]^-{\mathsf{D}}   \\
 \mathsf{S}(A) \otimes A \ar[r]_-{\nu \otimes \mathsf{D}} & A \otimes M \ar[r]_-{\alpha} & M.  
}
\end{gathered}\end{equation}
As a shorthand, we will denote $\mathsf{S}$-derivations as $\mathsf{D}: (A,\nu) \to (M,\alpha)$. For $\mathsf{S}$-derivations $\mathsf{D}: (A,\nu) \to (M,\alpha)$ and $\mathsf{D}^\prime:(A^\prime,\nu^\prime) \to (M^\prime,\alpha^\prime)$ an \textbf{$\mathsf{S}$-derivation morphism} between them is a pair \[(f,g): \left(\mathsf{D}: (A,\nu) \to (M,\alpha) \right) \to \left( \mathsf{D}^\prime: (A^\prime,\nu^\prime) \to (M^\prime,\alpha^\prime) \right)\] where $f: (A,\nu) \to (A^\prime, \nu^\prime)$ is an $\mathsf{S}$-algebra morphism and $g: M \to M^\prime$ is a map such that the diagrams in (\ref{diag:derivation-morph}) commute. We denote by $\mathsf{S}\text{-}\mathsf{DER}$ the category of $\mathsf{S}$-derivations and $\mathsf{S}$-derivation morphisms between them. 
\end{definition}

One should interpret (\ref{diag:derivation}) as a chain rule axiom. Indeed, taking our previous interpretation of $\mathsf{S}(A)$ as an algebra of smooth functions $f: A \to I$, then (\ref{diag:derivation}) says that $\mathsf{D}(\nu(f))= \nu(f^\prime) \mathsf{D}(x)$. Moreover, every $\mathsf{S}$-derivation $\mathsf{D}: (A,\nu) \to (M,\alpha)$ is in fact also a derivation $\mathsf{D}: (A, \mathsf{m}, \mathsf{u}) \to (M,\alpha)$ \cite[Prop 5.2]{lemay2019differential} and, of course, every $\mathsf{S}$-derivation morphism is a derivation morphism. In summary, while ordinary derivations are axiomatized by the Leibniz rule, $\mathsf{S}$-derivations are axiomatized by the chain rule and are themselves ordinary derivations as well. 

\smallskip

The canonical example of an $\mathsf{S}$-derivation is the deriving transformation itself \cite[Thm 5.13]{blute2015derivations}. Indeed, note that $(\mathsf{S}(A) \otimes A, \mathsf{m}_A \otimes 1_A)$ is an $(\mathsf{S}(A), \mu_A)$-module, then the chain rule axiom \textbf{[D.4]} says precisely that \[\mathsf{d}_A: (\mathsf{S}(A), \mu_A) \to (\mathsf{S}(A) \otimes A, \mathsf{m}_A \otimes 1_A)\] is an $\mathsf{S}$-derivation. In fact, it is the \textit{universal} $\mathsf{S}$-derivation for $(\mathsf{S}(A), \mu_A)$, in the sense that every $\mathsf{S}$-derivation on $(\mathsf{S}(A), \mu_A)$ factors through it \cite[Thm 5.15]{blute2015derivations}.
\smallskip

We can also consider universal $\mathsf{S}$-derivations for arbitrary $\mathsf{S}$-algebras \cite[Def 5.14]{blute2015derivations}, which generalize the notion of a module of Kahler differentials \cite[Chap 19, Sec 3]{lang2002algebra} for $\mathsf{S}$-algebras. As such, for free $\mathsf{S}$-algebras $(\mathsf{S}(A), \mu_A)$, one could say that $(\mathsf{S}(A) \otimes A, \mathsf{m}_A \otimes 1_A)$ is its module of Kahler $\mathsf{S}$-differentials. However for an arbitrary $\mathsf{S}$-algebra $(A,\nu)$, such a module of Kahler $\mathsf{S}$-differentials may not exist. That said, under mild coequalizer assumptions, it is possible to construct universal $\mathsf{S}$-derivations for any $\mathsf{S}$-algebra \cite[Thm 5.23]{blute2015derivations}. Universal $\mathsf{S}$-derivations will not play role in the story of this paper, so we invite the curious reader to see \cite{blute2015derivations,o2017smoothness,blute2011kahler} to learn more about them. 
\smallskip

On the other hand, $\mathsf{S}$-derivations can also be used to generalize differential algebras in a differential category, see \cite{lemay2019differential}. 

\smallskip
We conclude this section with $\mathsf{S}$-derivations in our two main examples of differential categories. 

\begin{example}\label{ex:der-sym} \textbf{\textsf{Ordinary derivations.}} For the symmetric algebra differential modality $\mathsf{Sym}$ on $\mathbb{VEC}_\mathbb{K}$, $\mathsf{Sym}$-derivations correspond precisely to ordinary derivations \cite[Remark 5.8]{blute2015derivations}. Indeed, first note that $\mathsf{Sym}$-algebras are precisely commutative $\mathbb{K}$-algebras. Then recall that for a commutative $\mathbb{R}$-algebra $A$ and an $A$-module $M$, a derivation is a $\mathbb{K}$-linear map $\mathsf{D}: A \to M$ which satisfies the Leibniz rule:
\[\mathsf{D}(ab) = a \mathsf{D}(b) + \mathsf{D}(a)b.\] 
As we have negatives, we also have that $\mathsf{D}(1) =0$, and so these are the same as derivations in our additive symmetric monoidal category $\mathsf{VEC}_\mathbb{K}$. As explained above, every $\mathsf{Sym}$-derivation is going to be a derivation. Of course, it is natural to ask how every derivation can also be a $\mathsf{Sym}$-derivation, given that an ordinary derivation is only required to satisfy the Leibniz rule and not a chain rule. It turns out that derivations do in fact satisfy a chain rule with respect to polynomials.
Indeed, given a polynomial $p \in \mathbb{K}[x_1, \hdots, x_n]$, by the Leibniz rule we get that the following equality holds for all $a_i \in A$: 
\[ \mathsf{D}\left( p(a_1, \hdots, a_n \right) = \sum \limits_{i=1}^n \frac{\partial p}{\partial x}(a_1, \hdots, a_n) \mathsf{D}(a_i). \] 
It is not difficult to see that this tells us precisely that a derivation satisfies (\ref{diag:derivation}), and is thus a $\mathsf{Sym}$-derivation. 
\end{example}

\begin{example}\label{ex:der-smooth} \textbf{\textsf{$\mathcal{C}^\infty$-derivations.}} For the free $\mathcal{C}^\infty$-ring modality $\mathsf{S}^\infty$ on $\mathbb{VEC}_\mathbb{R}$, $\mathsf{S}^\infty$-derivations correspond precisely to $\mathcal{C}^\infty$-derivations \cite[Thm 5.25]{cruttwell2019integral}. Indeed, as expected, $\mathsf{S}^\infty$-algebras are precisely $\mathcal{C}^\infty$-rings, and so $\mathsf{S}^\infty$-derivations should be the appropriate generalization of derivations for $\mathcal{C}^\infty$-rings. The notion of $\mathcal{C}^\infty$-derivations was first introduced by Kock and Dubuc in their study of derivations for Fermat theories \cite{dubuc19841}, and then considered more explicitly by Joyce in their study of algebraic geometry over $\mathcal{C}^\infty$-rings \cite{joyce2011introduction}. So for a $\mathcal{C}^\infty$-ring $A$ and a module $M$ over the underlying $\mathbb{R}$-algebra $A$, a \textbf{$\mathcal{C}^\infty$-derivation} \cite[Def 2.15]{joyce2011introduction} is an $\mathbb{R}$-linear map $\mathsf{D}: A \to M$ such that for every smooth function $f: \mathbb{R}^n \to \mathbb{R}$ and all $a_i \in A$, the following equality holds: 
\[ \mathsf{D}\left(\Phi_f(a_1, \hdots, a_n)\right) = \sum \limits_{i=1}^{n} \frac{\partial f}{\partial x_i}(a_1, \hdots,a_n)\mathsf{D}(a_i). \]
Intuitively, the above identity says that a $\mathcal{C}^\infty$-derivation satisfies the chain rule with respect to smooth functions. It then follows that $\mathcal{C}^\infty$-derivations and $\mathsf{S}^\infty$-derivations are indeed the same -- see \cite[Sec 5.1]{cruttwell2019integral} for more full details. 
\end{example}

\section{Monad on Arrow Category}\label{sec:monad}

In this section, we show that a differential modality lifts to a monad on the arrow category, and then, in the next section, we show that the algebras of this monad are precisely  derivations. So for the remainder of this section, let $\mathbb{X}$ be a differential category with differential modality $(\mathsf{S}, \mu, \eta, \mathsf{m}, \mathsf{u})$. 

Recall that for our underlying category $\mathbb{X}$, its \textbf{arrow category} is the category $\mathsf{Arr}[\mathbb{X}]$ whose objects are maps $\phi: A_0 \to A_1$ of $\mathbb{X}$, and whose maps are pairs $(f_0, f_1): \left( \phi: A_0 \to A_1 \right) \to \left(\phi^\prime: A_0^\prime \to A_1^\prime\right)$ of maps $f_0: A_0 \to A_0^\prime$ and $f_1: A_1 \to A_1^\prime$ of $\mathbb{X}$ such that the following diagram commutes: 
\begin{equation}\begin{gathered}\label{diag:arrow-morph} \xymatrixcolsep{5pc}\xymatrix{
        A_0 \ar[r]^-{\phi} \ar[d]_-{f_0} & A_1 \ar[d]^-{f_1}  \\
    A_0^\prime \ar[r]_-{\phi^\prime} & A_1^\prime.} \end{gathered}\end{equation}
Composition in $\mathsf{Arr}[\mathbb{X}]$ is defined pointwise as in $\mathbb{X}$, that is, $(g_0, g_1) \circ (f_0, f_1) = (g_0 \circ f_0, g_1 \circ f_1)$, while identities in $\mathsf{Arr}[\mathbb{X}]$ are pairs of identities in $\mathbb{X}$, that is, $1_{\left(\phi: A_0 \to A_1 \right)} = (1_{A_0}, 1_{A_1})$. 

We will now define a monad on the arrow category $\mathsf{Arr}[\mathbb{X}]$. First define the endofunctor \[\overline{\mathsf{S}}: \mathsf{Arr}[\mathbb{X}] \to \mathsf{Arr}[\mathbb{X}]\] on objects as follows: 

\begin{align}\label{diag:overlineS-def}
\overline{\mathsf{S}}\left(\phi: A_0 \to A_1 \right):= \left( \xymatrixcolsep{3pc}\xymatrix{  \mathsf{S}(A_0) \ar[r]^-{\mathsf{d}_{A_0}} & \mathsf{S}(A_0) \otimes A_0 \ar[r]^-{1_{\mathsf{S}(A_0)} \otimes \phi} & \mathsf{S}(A_0) \otimes A_1 
  }\right)
\end{align}
and on maps as $\overline{S}(f_0, f_1) := (\mathsf{S}(f_0),\mathsf{S}(f_0) \otimes f_1)$. 

\begin{remark} For readability, in proofs throughout the rest of the paper, we will often omit subscripts. \end{remark}

\begin{lemma} $\overline{\mathsf{S}}: \mathsf{Arr}[\mathbb{X}] \to \mathsf{Arr}[\mathbb{X}]$ is a functor.
\end{lemma}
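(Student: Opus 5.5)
To prove the lemma we check three things: $\overline{\mathsf{S}}$ sends morphisms of $\mathsf{Arr}[\mathbb{X}]$ to morphisms of $\mathsf{Arr}[\mathbb{X}]$, it preserves identities, and it preserves composition. The last two are immediate from functoriality of $\mathsf{S}$ and of $\otimes$. Indeed, $\overline{\mathsf{S}}(1_{A_0},1_{A_1}) = (\mathsf{S}(1_{A_0}), \mathsf{S}(1_{A_0}) \otimes 1_{A_1}) = (1_{\mathsf{S}(A_0)}, 1_{\mathsf{S}(A_0)\otimes A_1})$. For composition,
\[\overline{\mathsf{S}}(g_0,g_1)\circ \overline{\mathsf{S}}(f_0,f_1) = (\mathsf{S}(g_0)\circ\mathsf{S}(f_0), (\mathsf{S}(g_0)\otimes g_1)\circ(\mathsf{S}(f_0)\otimes f_1)) = (\mathsf{S}(g_0 \circ f_0), \mathsf{S}(g_0\circ f_0)\otimes (g_1\circ f_1)),\]
which is $\overline{\mathsf{S}}((g_0,g_1)\circ(f_0,f_1))$ since composition in $\mathsf{Arr}[\mathbb{X}]$ is pointwise.

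The only real content is well-definedness on maps. Given $(f_0,f_1): (\phi: A_0 \to A_1) \to (\phi': A_0' \to A_1')$, so that $f_1 \circ \phi = \phi' \circ f_0$, we must show the square (\ref{diag:arrow-morph}) for $\overline{\mathsf{S}}(f_0,f_1)$ commutes:
\[(\mathsf{S}(f_0)\otimes f_1)\circ(1\otimes \phi)\circ \mathsf{d}_{A_0} = (1 \otimes \phi')\circ \mathsf{d}_{A_0'} \circ \mathsf{S}(f_0).\]
We rewrite the left side using the bifunctoriality of $\otimes$ as $(1\otimes f_1\circ\phi)\circ(\mathsf{S}(f_0)\otimes 1_{A_0})\circ \mathsf{d}_{A_0}$. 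We then substitute $f_1\circ \phi = \phi'\circ f_0$ and factor the result as $(1\otimes\phi')\circ(\mathsf{S}(f_0)\otimes f_0)\circ\mathsf{d}_{A_0}$. Naturality of the deriving transformation $\mathsf{d}$ gives $(\mathsf{S}(f_0)\otimes f_0)\circ \mathsf{d}_{A_0} = \mathsf{d}_{A_0'}\circ \mathsf{S}(f_0)$, which yields the right side.

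No step is a genuine obstacle. The one point requiring care is this last step, where the tensor factors must be split so that the arrow-category square and the naturality of $\mathsf{d}$ can each be applied in turn.
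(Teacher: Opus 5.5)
Your proof is correct and follows essentially the same route as the paper: well-definedness on maps comes from pasting the naturality square of $\mathsf{d}$ with the square $(\mathsf{S}(f_0)\otimes f_1)\circ(1\otimes\phi) = (1\otimes\phi')\circ(\mathsf{S}(f_0)\otimes f_0)$, which follows from the arrow-category condition. Functoriality then follows from that of $\mathsf{S}$ and $\otimes$; you spell out the identity and composition checks that the paper treats as immediate.
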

\begin{proof} We first need to explain why $\overline{S}$ is well-defined on maps. So let $(f_0, f_1): \left( \phi: A_0 \to A_1 \right) \to \left(\phi^\prime: A_0^\prime \to A_1^\prime\right)$ be a map in $\mathsf{Arr}[\mathbb{X}]$. By naturality of the deriving transformation, and since $(f_0, f_1)$ is a map in the arrow category, the following diagram commutes:
\[ \xymatrixcolsep{5pc}\xymatrix{ \mathsf{S}(A_0) \ar[d]_-{\mathsf{S}(f_0)} \ar[r]^-{\mathsf{d}} \ar@{}[dr]|-{\text{\normalfont Nat of $\mathsf{d}$}} & \mathsf{S}(A_0) \otimes A_0 \ar[r]^-{1 \otimes \phi} \ar[d]|-{\mathsf{S}(f_0) \otimes f_0} \ar@{}[dr]|-{\text{\normalfont (\ref{diag:arrow-morph})}}  & \mathsf{S}(A_0) \otimes A_1 \ar[d]^-{\mathsf{S}(f_0) \otimes f_1} \\ 
\mathsf{S}(A_0^\prime) \ar[r]_-{\mathsf{d}} & \mathsf{S}(A_0^\prime) \otimes A_0^\prime \ar[r]_-{1 \otimes \phi^\prime} & \mathsf{S}(A_0^\prime) \otimes A_1^\prime.
  } \]
  Therefore $\overline{S}(f_0, f_1): \overline{\mathsf{S}}\left(\phi: A_0 \to A_1 \right) \to \overline{\mathsf{S}}\left(\phi^\prime: A_0^\prime \to A_1^\prime\right)$ satisfies (\ref{diag:arrow-morph}) and is a map in the arrow category. As such $\overline{\mathsf{S}}$ is well-defined. Functoriality of $\overline{\mathsf{S}}$ immediately follows from functoriality of $\mathsf{S}$. 
\end{proof}

Now we define our monad unit $\overline{\eta}$. So define \[\overline{\eta}_{\left(\phi: A_0 \to A_1 \right)}: \left( \phi: A_0 \to A_1 \right) \to \overline{\mathsf{S}}\left( \phi: A_0 \to A_1 \right)\] as follows: 
\begin{align}
\overline{\eta}_{\left(\phi: A_0 \to A_1 \right)} := \left( \eta_{A_0}: A_0 \to \mathsf{S}(A_0), \mathsf{u}_{A_0} \otimes 1_{A_1}: A_1 \to \mathsf{S}(A_0) \otimes A_1 \right).
\end{align}

\begin{lemma} $\overline{\eta}$ is a natural transformation. 
\end{lemma}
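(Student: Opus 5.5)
The argument splits into two parts. First, each component $\overline{\eta}_{(\phi: A_0 \to A_1)}$ must be a map in $\mathsf{Arr}[\mathbb{X}]$. Second, the family must be natural in $(\phi: A_0 \to A_1)$.

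\textbf{Well-definedness.} By (\ref{diag:arrow-morph}), we need the square with top $\phi$, left $\eta_{A_0}$, right $\mathsf{u}_{A_0} \otimes 1_{A_1}$ and bottom $(1_{\mathsf{S}(A_0)} \otimes \phi) \circ \mathsf{d}_{A_0}$ to commute. That is, we must show
\[ (1_{\mathsf{S}(A_0)} \otimes \phi) \circ \mathsf{d}_{A_0} \circ \eta_{A_0} = (\mathsf{u}_{A_0} \otimes 1_{A_1}) \circ \phi . \]
Apply the linear rule \textbf{[D.3]} to rewrite $\mathsf{d}_{A_0} \circ \eta_{A_0}$ as $\mathsf{u}_{A_0} \otimes 1_{A_0}$. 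The left side then becomes
\[ (1_{\mathsf{S}(A_0)} \otimes \phi) \circ (\mathsf{u}_{A_0} \otimes 1_{A_0}) = \mathsf{u}_{A_0} \otimes \phi . \]
By bifunctoriality of $\otimes$ and strictness ($I \otimes A = A$), this equals $(\mathsf{u}_{A_0} \otimes 1_{A_1}) \circ \phi$.

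\textbf{Naturality.} Let $(f_0, f_1): (\phi: A_0 \to A_1) \to (\phi^\prime: A_0^\prime \to A_1^\prime)$ be a map in the arrow category. Since composition in $\mathsf{Arr}[\mathbb{X}]$ is componentwise, naturality reduces to two equations in $\mathbb{X}$.
\begin{enumerate}[(1)]
\item $\mathsf{S}(f_0) \circ \eta_{A_0} = \eta_{A_0^\prime} \circ f_0$. This is just naturality of $\eta$.
\item $(\mathsf{S}(f_0) \otimes f_1) \circ (\mathsf{u}_{A_0} \otimes 1_{A_1}) = (\mathsf{u}_{A_0^\prime} \otimes 1_{A_1^\prime}) \circ f_1$. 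By bifunctoriality, the left side is $(\mathsf{S}(f_0) \circ \mathsf{u}_{A_0}) \otimes f_1$. Naturality of $\mathsf{u}$ (with $I$ constant) gives $\mathsf{S}(f_0) \circ \mathsf{u}_{A_0} = \mathsf{u}_{A_0^\prime}$, so the left side is $\mathsf{u}_{A_0^\prime} \otimes f_1$, which equals the right side.
\end{enumerate}

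Neither part poses a real obstacle. The only substantive input is \textbf{[D.3]} in the well-definedness step, together with careful bookkeeping of the strict unit when manipulating $\mathsf{u} \otimes \phi$.
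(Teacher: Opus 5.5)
Your proposal is correct and follows the paper's proof exactly: \textbf{[D.3]} gives the commuting square showing each component $\overline{\eta}_{(\phi: A_0 \to A_1)}$ is a map in $\mathsf{Arr}[\mathbb{X}]$, and naturality of $\eta$ and $\mathsf{u}$ gives naturality of $\overline{\eta}$. You also spell out the bifunctoriality and strict-unit bookkeeping that the paper leaves implicit.
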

\begin{proof} We first need to show that $\overline{\eta}$ is well-defined. To do so, note that by the linear rule \textbf{[D.3]}, the following diagram commutes: 
\[ \xymatrixcolsep{5pc}\xymatrix{ A_0 \ar[d]_-{\eta} \ar[rr]^-{\phi} \ar[dr]^-{\mathsf{u} \otimes 1} && A_1 \ar[d]^-{\mathsf{u} \otimes 1}  \\ 
\mathsf{S}(A_0)  \ar@{}[ur]|(0.3){\text{\normalfont \textbf{[D.3]}}}  \ar[r]_-{\mathsf{d}} & \mathsf{S}(A_0) \otimes A_0 \ar[r]_-{1 \otimes \phi} & \mathsf{S}(A_0) \otimes A_1.
  } \]
  Therefore $\overline{\eta}_{\left(\phi: A_0 \to A_1 \right)}: \left( \phi: A_0 \to A_1 \right) \to \overline{\mathsf{S}}\left( \phi: A_0 \to A_1 \right)$ satisfies (\ref{diag:arrow-morph}) and is indeed a map in the arrow category. So $\overline{\eta}$ is well-defined. Naturality of $\overline{\eta}$ follows immediately from naturality of $\eta$ and $\mathsf{u}$. 
\end{proof}

To define the monad multiplication $\overline{\mu}$, we first expand $\overline{\mathsf{S}}\overline{\mathsf{S}}\left(\phi: A_0 \to A_1 \right)$ to be given by the following composite: 
\begin{align*}
\xymatrixcolsep{4.5pc}\xymatrix{  \mathsf{S}\mathsf{S}(A_0) \ar[r]^-{\mathsf{d}_{\mathsf{S}(A_0)}} & \mathsf{S}\mathsf{S}(A_0) \otimes \mathsf{S}(A_0) \ar[r]^-{1_{\mathsf{S}(A_0)} \otimes \mathsf{d}_{A_0}} & \mathsf{S}\mathsf{S}(A_0) \otimes \mathsf{S}(A_0) \otimes A_0 \ar[r]^-{1_{\mathsf{S}\mathsf{S}(A_0)} \otimes 1_{\mathsf{S}(A_0)} \otimes \phi} & \mathsf{S}\mathsf{S}(A_0) \otimes \mathsf{S}(A_0) \otimes A_1.
  }
\end{align*}
Then define \[\overline{\mu}_{\left(\phi: A_0 \to A_1 \right)}: \overline{\mathsf{S}}~\overline{\mathsf{S}}\left( \phi: A_0 \to A_1 \right) \to \overline{\mathsf{S}}\left( \phi: A_0 \to A_1 \right)\] as the pair of following maps: 
\begin{equation}\begin{gathered}\label{diag:overlinemu} 
\xymatrixcolsep{5pc}\xymatrix{  \mathsf{S}\mathsf{S}(A_0) \ar[r]^-{\mu_{A_0}} & \mathsf{S}(A_0)}  \\ 
\xymatrixcolsep{5pc}\xymatrix{  \mathsf{S}\mathsf{S}(A_0) \otimes \mathsf{S}(A_0) \otimes A_1 \ar[r]^-{\mu_{A_0} \otimes  1_{\mathsf{S}(A_0)} \otimes 1_{A_1}} & \mathsf{S}(A_0) \otimes \mathsf{S}(A_0) \otimes A_1 \ar[r]^-{\mathsf{m}_{A_0}\otimes 1_{A_1}} & \mathsf{S}(A_0) \otimes A_1. 
  }
\end{gathered}\end{equation}

\begin{lemma} $\overline{\mu}$ is a natural transformation. 
\end{lemma}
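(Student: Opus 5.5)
Following the pattern of the previous two lemmas, the proof splits into two parts: well-definedness of $\overline{\mu}_{\left(\phi: A_0 \to A_1 \right)}$ as a map in $\mathsf{Arr}[\mathbb{X}]$, and naturality. For well-definedness, we must check that the square (\ref{diag:arrow-morph}) commutes with source $\overline{\mathsf{S}}\,\overline{\mathsf{S}}(\phi)$ and target $\overline{\mathsf{S}}(\phi)$. Omitting subscripts, this is the equation
\[ (\mathsf{m} \otimes 1) \circ (\mu \otimes 1 \otimes 1) \circ (1 \otimes 1 \otimes \phi) \circ (1 \otimes \mathsf{d}) \circ \mathsf{d} = (1 \otimes \phi) \circ \mathsf{d} \circ \mu \]
as maps $\mathsf{S}\mathsf{S}(A_0) \to \mathsf{S}(A_0) \otimes A_1$. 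The strategy is to slide $\phi$ out of the way and reduce to the chain rule \textbf{[D.4]}.

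On the left-hand side, $1 \otimes 1 \otimes \phi$ acts only on the last tensor factor, while $\mu \otimes 1 \otimes 1$ and $\mathsf{m} \otimes 1$ act only on the first factors. By bifunctoriality of $\otimes$ (interchange), we may therefore move $1 \otimes 1 \otimes \phi$ past both of these, rewriting the left-hand side as
\[ (1 \otimes \phi) \circ (\mathsf{m} \otimes 1) \circ (\mu \otimes 1 \otimes 1) \circ (1 \otimes \mathsf{d}) \circ \mathsf{d}_{\mathsf{S}(A_0)}. \]
Next, $(\mu \otimes 1 \otimes 1) \circ (1 \otimes \mathsf{d}) = \mu \otimes \mathsf{d}$, so the composite $(\mathsf{m} \otimes 1) \circ (\mu \otimes \mathsf{d}) \circ \mathsf{d}_{\mathsf{S}(A_0)}$ is exactly the lower path of \textbf{[D.4]}. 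By the chain rule it equals $\mathsf{d}_{A_0} \circ \mu_{A_0}$, which yields the right-hand side. I would present this as a diagram with one interchange cell followed by the \textbf{[D.4]} cell.

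For naturality, take a map $(f_0, f_1)$ in $\mathsf{Arr}[\mathbb{X}]$. Since composition in $\mathsf{Arr}[\mathbb{X}]$ is componentwise, we check the two components separately. The first component is naturality of $\mu$ with respect to $f_0$. For the second component, note that $\overline{\mathsf{S}}\,\overline{\mathsf{S}}(f_0,f_1)$ has second component $\mathsf{S}\mathsf{S}(f_0) \otimes \mathsf{S}(f_0) \otimes f_1$. It then suffices to use naturality of $\mu$ on the first factor, and naturality of $\mathsf{m}$ (equivalently, that $\mathsf{S}(f_0)$ is a monoid morphism) to get $\mathsf{S}(f_0) \circ \mathsf{m} = \mathsf{m} \circ (\mathsf{S}(f_0) \otimes \mathsf{S}(f_0))$. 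The $f_1$ factor is simply carried along.

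The only substantive step is the well-definedness check, and specifically recognising that after the interchange the remaining composite is literally the chain rule \textbf{[D.4]}. Everything else is routine naturality bookkeeping.
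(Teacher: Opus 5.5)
Your proposal is correct and follows essentially the same route as the paper. The well-definedness square comes from sliding $1 \otimes 1 \otimes \phi$ past $\mu \otimes 1 \otimes 1$ and $\mathsf{m} \otimes 1$ and then applying the chain rule \textbf{[D.4]}, and naturality is inherited componentwise from naturality of $\mu$ and $\mathsf{m}$.
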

\begin{proof} We first need to show that $\overline{\mu}$ is well-defined. To do so, note that by the chain rule \textbf{[D.4]}, the following diagram commutes: 
\[ \xymatrixcolsep{4.5pc}\xymatrix{  \mathsf{S}\mathsf{S}(A_0) \ar@{}[ddrr]|-{\text{\normalfont \textbf{[D.4]}}} \ar[dd]_-{\mu} \ar[r]^-{\mathsf{d}} & \mathsf{S}\mathsf{S}(A_0) \otimes \mathsf{S}(A_0) \ar[r]^-{1 \otimes \mathsf{d}} & \mathsf{S}\mathsf{S}(A_0) \otimes \mathsf{S}(A_0) \otimes A_0 \ar[r]^-{1 \otimes 1 \otimes \phi} \ar[d]^-{\mu \otimes  1 \otimes 1} & \mathsf{S}\mathsf{S}(A_0) \otimes \mathsf{S}(A_0) \otimes A_1 \ar[d]^-{\mu \otimes  1 \otimes 1} \\ 
& & \mathsf{S}(A_0) \otimes \mathsf{S}(A_0) \otimes A_0 \ar[d]^-{\mathsf{m} \otimes 1} & \mathsf{S}(A_0) \otimes \mathsf{S}(A_0) \otimes A_1 \ar[d]^-{\mathsf{m}\otimes 1}
\\
\mathsf{S}(A_0)    \ar[rr]_-{\mathsf{d}_{A_0}} && \mathsf{S}(A_0) \otimes A_0 \ar[r]_-{1 \otimes \phi} & \mathsf{S}(A_0) \otimes A_1.
  } \]
  Therefore $\overline{\mu}_{\left(\phi: A_0 \to A_1 \right)}: \overline{\mathsf{S}}~\overline{\mathsf{S}}\left( \phi: A_0 \to A_1 \right) \to \overline{\mathsf{S}}\left( \phi: A_0 \to A_1 \right)$ satisfies (\ref{diag:arrow-morph}) and is indeed a map in the arrow category. So $\overline{\mu}$ is well-defined. Naturality of $\overline{\mu}$ follows immediately from naturality of $\mu$ and $\mathsf{m}$. 
\end{proof}

\begin{proposition}\label{prop:monad} $(\overline{\mathsf{S}}, \overline{\mu}, \overline{\eta})$ is a monad.
\end{proposition}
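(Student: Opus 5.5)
Since the functor $\overline{\mathsf{S}}$ and the transformations $\overline{\eta}$ and $\overline{\mu}$ are already known to be well-defined and natural by the preceding lemmas, it remains only to check the three monad diagrams (\ref{diag:monad}) in $\mathsf{Arr}[\mathbb{X}]$. Composition and identities in the arrow category are computed componentwise, so each diagram holds if and only if it holds separately in the $0$-component and in the $1$-component. The plan is therefore to verify the two unit laws and the associativity law one component at a time.

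In the $0$-component, $\overline{\mathsf{S}}$ acts as $\mathsf{S}$, $\overline{\eta}$ as $\eta$, and $\overline{\mu}$ as $\mu$. Moreover, for a map $(f_0,f_1)$ the $0$-component of $\overline{\mathsf{S}}(f_0,f_1)$ is $\mathsf{S}(f_0)$. Hence all three diagrams in the $0$-component are literally the monad axioms (\ref{diag:monad}) for $(\mathsf{S},\mu,\eta)$.

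In the $1$-component, take $\phi: A_0 \to A_1$. The underlying object there is $\mathsf{S}(A_0)\otimes A_1$.
\begin{itemize}
\item For the unit law $\overline{\mu}\circ\overline{\eta}_{\overline{\mathsf{S}}(\phi)} = 1$, the $1$-component of $\overline{\eta}_{\overline{\mathsf{S}}(\phi)}$ is $\mathsf{u}_{\mathsf{S}(A_0)}\otimes 1\otimes 1$. Composing with $\mu\otimes 1\otimes 1$ and using that $\mu$ preserves units ($\mu\circ\mathsf{u}_{\mathsf{S}(A_0)}=\mathsf{u}_{A_0}$) gives $\mathsf{m}\circ(\mathsf{u}\otimes 1)\otimes 1 = 1$ by the monoid unit law.
\item For the unit law $\overline{\mu}\circ\overline{\mathsf{S}}(\overline{\eta}) = 1$, the $1$-component of $\overline{\mathsf{S}}(\overline{\eta})$ is $\mathsf{S}(\eta)\otimes\mathsf{u}\otimes 1$. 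Then $\mu\circ\mathsf{S}(\eta)=1$ together with the right unit law of $\mathsf{m}$ gives the identity.
\end{itemize}

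For associativity, one must compare two maps $\mathsf{S}\mathsf{S}\mathsf{S}(A_0)\otimes\mathsf{S}\mathsf{S}(A_0)\otimes\mathsf{S}(A_0)\otimes A_1\to\mathsf{S}(A_0)\otimes A_1$.
\begin{itemize}
\item The composite $\overline{\mu}\circ\overline{\mu}_{\overline{\mathsf{S}}(\phi)}$ has $1$-component $(\mathsf{m}\otimes 1)\circ(\mu\otimes 1\otimes 1)\circ(\mathsf{m}_{\mathsf{S}(A_0)}\otimes 1\otimes 1)\circ(\mu_{\mathsf{S}(A_0)}\otimes 1\otimes 1\otimes 1)$.
\item The composite $\overline{\mu}\circ\overline{\mathsf{S}}(\overline{\mu})$ has $1$-component $(\mathsf{m}\otimes 1)\circ(\mu\otimes 1\otimes 1)\circ(\mathsf{S}(\mu)\otimes ((\mathsf{m}\otimes 1)\circ(\mu\otimes 1\otimes 1)))$.
\end{itemize}
In the first, use that $\mu$ is a monoid morphism to rewrite $\mu\circ\mathsf{m}_{\mathsf{S}(A_0)}$ as $\mathsf{m}\circ(\mu\otimes\mu)$. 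Combining this with the monad associativity $\mu\circ\mu_{\mathsf{S}}=\mu\circ\mathsf{S}(\mu)$, both sides reduce to the same expression $\mathsf{m}\circ(\mathsf{m}\otimes 1)$ applied to $(\mu\circ\mathsf{S}(\mu))\otimes\mu\otimes 1$, up to associativity of $\mathsf{m}$.

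The main obstacle is just this bookkeeping in the associativity check, in particular keeping track of which tensor factor each $\mu$ and $\mathsf{m}$ acts on. No deriving-transformation axioms are needed here, since the $\mathsf{d}$'s only enter through well-definedness, which the preceding lemmas already handled.
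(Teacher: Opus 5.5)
Your proposal is correct and is essentially the paper's own proof. Both check the monad laws componentwise: the $0$-components are exactly the monad laws for $(\mathsf{S},\mu,\eta)$, and the $1$-components follow from the monad unit and associativity laws, the monoid unit and associativity laws, and the fact that $\mu$ preserves $\mathsf{u}$ and $\mathsf{m}$, with no deriving-transformation axioms needed.
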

\begin{proof} Starting with the unit axioms, using both our monad and monoid unit laws, and since the monad multiplication preserves the monoid unit, the following diagrams commute: 
\[ \xymatrixcolsep{5pc}\xymatrixrowsep{3pc}\xymatrix{ \mathsf{S}(A_0) \ar@{}[dr]|-{\text{\normalfont (\ref{diag:monad})}} \ar[r]^-{\mathsf{S}(\eta)} \ar[d]_-{\eta_{\mathsf{S}(A_0)}}& \mathsf{S}\mathsf{S}(A_0) \ar[d]^-{\mu}  \\ 
\mathsf{S}\mathsf{S}(A_0) \ar[r]_-{\mu} & \mathsf{S}(A_0) 
} \]
\[
\xymatrixcolsep{5pc}\xymatrixrowsep{3pc}\xymatrix{ \mathsf{S}(A_0) \otimes A_1 \ar[rr]^-{\mathsf{S}(\eta)\otimes\mathsf{u} \otimes 1} \ar[dd]_-{\mathsf{u} \otimes 1 \otimes 1} \ar@{=}[ddrr]^-{} \ar[ddr]|-{\mathsf{u}\otimes 1 \otimes 1} \ar[drr]|-{1  \otimes \mathsf{u} \otimes 1} & &\mathsf{S}\mathsf{S}(A_0) \otimes \mathsf{S}(A_0) \otimes A_1 \ar[d]^-{\mu \otimes 1 \otimes 1} \ar@{}[dl]|(0.3){\text{\normalfont (\ref{diag:monad})}}  \\ 
 & &  \mathsf{S}A_0 \otimes \mathsf{S}(A_0) \otimes A_1  \ar@{}[dl]|(0.3){\text{\normalfont (\ref{diag:monoid})}}  \ar[d]^-{\mathsf{m}\otimes 1}\\
 \mathsf{S}\mathsf{S}(A_0) \otimes \mathsf{S}(A_0) \otimes A_1 \ar@{}[ur]|(0.5){\text{\normalfont (\ref{diag:monoid-morph})}} \ar[r]_-{\mu \otimes 1 \otimes 1} & \mathsf{S}(A_0) \otimes \mathsf{S}(A_0) \otimes A_1 \ar@{}[u]|(0.5){\text{\normalfont (\ref{diag:monoid})}} \ar[r]_-{\mathsf{m} \otimes 1} & \mathsf{S}(A_0) \otimes A_1.
}
\]
Together, these imply that $\overline{\mu}\circ \overline{\eta}_{\overline{\mathsf{S}}}=1=\overline{\mu}\circ\overline{\mathsf{S}}(\overline{\eta})$. On the other hand, using both our monad and monoid associativity laws, and since the monad multiplication preserves the monoid multiplication, we have that the following diagrams commute: 
\[\xymatrixcolsep{3pc}\xymatrixrowsep{3pc}\xymatrix{
\mathsf{S}\mathsf{S}\mathsf{S}(A_0) \ar[r]^-{\mu} \ar[d]_-{\mathsf{S}(\mu)} \ar@{}[dr]|-{\text{\normalfont (\ref{diag:monad})}} & \mathsf{S}\mathsf{S}(A_0) \ar[d]^-{\mu} \\
\mathsf{S}\mathsf{S}(A_0) \ar[r]_-{\mu} & \mathsf{S}(A_0) }
\]
\[ \xymatrixcolsep{4pc}\xymatrixrowsep{4pc}\xymatrix{
\mathsf{S}\mathsf{S}\mathsf{S}(A_0) \otimes \mathsf{S}\mathsf{S}(A_0) \otimes \mathsf{S}(A_0) \otimes A_1 \ar@{}[dr]|-{\text{\normalfont (\ref{diag:monad})}} \ar[r]^-{\mu \otimes 1 \otimes 1 \otimes 1} \ar[d]_-{\mathsf{S}(\mu) \otimes \mu \otimes 1 \otimes 1} & \mathsf{S}\mathsf{S}(A_0) \otimes\mathsf{S}\mathsf{S}(A_0) \otimes \mathsf{S}(A_0) \otimes A_1 \ar[r]^-{\mathsf{m}\otimes 1 \otimes 1} \ar[d]^-{\mu \otimes \mu \otimes 1 \otimes 1} \ar@{}[dr]|-{\text{\normalfont (\ref{diag:monoid-morph})}} & \mathsf{S}\mathsf{S}(A_0) \otimes \mathsf{S}(A_0) \otimes A_1 \ar[d]^-{\mu \otimes 1 \otimes 1}\\
\mathsf{S}\mathsf{S}(A_0) \otimes \mathsf{S}(A_0) \otimes \mathsf{S}(A_0) \otimes A_1 \ar[d]_-{1 \otimes \mathsf{m} \otimes 1} \ar[r]_-{\mu \otimes 1 \otimes 1 } & \mathsf{S}(A_0) \otimes \mathsf{S}(A_0) \otimes \mathsf{S}(A_0) \otimes A_1  \ar[d]_-{1 \otimes \mathsf{m} \otimes 1} \ar[r]^-{\mathsf{m} \otimes 1 \otimes 1} \ar@{}[dr]|-{\text{\normalfont (\ref{diag:monoid})}} & \mathsf{S}(A_0) \otimes \mathsf{S}(A_0) \otimes A_1 \ar[d]^-{\mathsf{m} \otimes 1} \\
\mathsf{S}\mathsf{S}(A_0) \otimes \mathsf{S}(A_0) \otimes A_1 \ar[r]_-{\mu \otimes 1 \otimes 1 } & \mathsf{S}(A_0) \otimes \mathsf{S}(A_0) \otimes A_1 \ar[r]_-{\mathsf{m} \otimes 1} & \mathsf{S}(A_0) \otimes A_1.}
\]
Together, these imply that $\overline{\mu}\circ\overline{\mathsf{S}}(\overline{\mu})=\overline{\mu}\circ\overline{\mu}_{\overline{\mathsf{S}}}$. Therefore, we conclude that $(\overline{\mathsf{S}}, \overline{\mu}, \overline{\eta})$ is a monad.
\end{proof}

\section{Derivations as Algebras}\label{sec:deri-alg}

In this section, we show that the algebras of the monad we constructed in the previous section are precisely $\mathsf{S}$-derivations. Once again, throughout this section, let $\mathbb{X}$ be a differential category with differential modality $(\mathsf{S}, \mu, \eta, \mathsf{m}, \mathsf{u})$. 

\smallskip

Let us first work out what $\overline{\mathsf{S}}$-algebra would be explicitly. So an $\overline{\mathsf{S}}$-algebra would be a tuple:
\[\left( \left( \phi: A_0 \to A_1 \right), (\nu_0, \nu_1) \right)\]
consisting of a map $\phi: A_0 \to A_1$ in $\mathbb{X}$ and a map $(\nu_0, \nu_1): \overline{\mathsf{S}}\left( \phi: A_0 \to A_1 \right) \to \left( \phi: A_0 \to A_1 \right)$ in the arrow category. So this means we have maps $\nu_0: \mathsf{S}(A_0) \to A_0$ and $\nu_1: \mathsf{S}(A_0) \otimes A_1 \to A_1$ such that the following diagram commutes: 
\begin{equation}\begin{gathered}\label{diag:overlineS-alg-maps} \xymatrixcolsep{5pc}\xymatrix{ \mathsf{S}(A_0) \ar[d]_-{\nu_0} \ar[r]^-{\mathsf{d}_{A_0}}  & \mathsf{S}(A_0) \otimes A_0 \ar[r]^-{1_{\mathsf{S}(A_0)} \otimes \phi} & \mathsf{S}(A_0) \otimes A_1 \ar[d]^-{\nu_1} \\ 
A_0 \ar[rr]_-{\phi} & & A_1.
  } 
\end{gathered}\end{equation}
Now expanding out the $\overline{\mathsf{S}}$-algebra axioms (\ref{diag:S-alg}) gives us the following four diagrams: 
\begin{equation}\begin{gathered}\label{diag:overlineS-alg} \xymatrixcolsep{3pc}\xymatrix{  A_0 \ar[r]^-{\eta_{A_0}} \ar@{=}[dr]^-{}& \mathsf{S}(A_0)  \ar[d]^-{\nu_0} & A_1 \ar[r]^-{\mathsf{u}_{A_0} \otimes 1_{A_1}}\ar@{=}[dr]^-{} & \mathsf{S}(A_0) \otimes A_1 \ar[d]^-{\nu_1} \\
   & A_0 & & A_1 \\ 
\mathsf{S}  \mathsf{S}(A_0) \ar[r]^-{\mathsf{S}(\nu_0)}  \ar[d]_-{\mu_{A_0}} & \mathsf{S}(A_0)  \ar[d]^-{\nu_0} & \mathsf{S}\mathsf{S}(A_0) \otimes \mathsf{S}(A_0) \otimes A_1 \ar[r]^-{\mu_{A_0} \otimes  1_{\mathsf{S}(A_0)} \otimes 1_{A_1}} \ar[d]_-{\mathsf{S}(\nu_0) \otimes \nu_1} & \mathsf{S}(A_0) \otimes \mathsf{S}(A_0) \otimes A_1 \ar[r]^-{\mathsf{m}_{A_0}\otimes 1_{A_1}} & \mathsf{S}(A_0) \otimes A_1 \ar[d]^-{\nu_1} \\
      \mathsf{S}(A_0)  \ar[r]_-{\nu_0} & A_0 & \mathsf{S}(A_0) \otimes A_1 \ar[rr]_-{\nu_1} && A_1  
  } 
\end{gathered}\end{equation}
where the top two diagrams come from $(\nu_0, \nu_1) \circ \overline{\eta}_{\left(\phi: A_0 \to A_1 \right)} = 1_{\left(\phi: A_0 \to A_1 \right)}$, while the bottom two diagrams come from $(\nu_0, \nu_1) \circ \overline{\mu}_{\left(\phi: A_0 \to A_1 \right)} = (\nu_0, \nu_1) \circ \overline{\mathsf{S}}(\nu_0, \nu_1)$. Taking the two left most diagrams immediately tells us that: 

\begin{lemma} If $\left( \left( \phi: A_0 \to A_1 \right), (\nu_0, \nu_1) \right)$ is an $\overline{\mathsf{S}}$-algebra, then $(A_0, \nu_0)$ is an $\mathsf{S}$-algebra. 
\end{lemma}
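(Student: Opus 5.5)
The plan is to read off the $\mathsf{S}$-algebra axioms for $(A_0,\nu_0)$ directly from the expanded $\overline{\mathsf{S}}$-algebra axioms in (\ref{diag:overlineS-alg}), by projecting onto the first component of the arrow category. Recall that composition and identities in $\mathsf{Arr}[\mathbb{X}]$ are computed componentwise. Recall also that, by construction, the first components of $\overline{\eta}$, $\overline{\mu}$ and $\overline{\mathsf{S}}(\nu_0,\nu_1)$ are $\eta_{A_0}$, $\mu_{A_0}$ and $\mathsf{S}(\nu_0)$ respectively.

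First I check the unit law. The equation $(\nu_0,\nu_1)\circ\overline{\eta}_{(\phi: A_0 \to A_1)} = 1_{(\phi: A_0 \to A_1)} = (1_{A_0},1_{A_1})$ has first component $\nu_0 \circ \eta_{A_0} = 1_{A_0}$. This is exactly the top-left diagram of (\ref{diag:overlineS-alg}), and it is the left diagram of (\ref{diag:S-alg}) for $(A_0,\nu_0)$.

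Next I check the associativity law. The equation $(\nu_0,\nu_1)\circ\overline{\mu}_{(\phi: A_0 \to A_1)} = (\nu_0,\nu_1)\circ\overline{\mathsf{S}}(\nu_0,\nu_1)$ has first component $\nu_0\circ\mu_{A_0} = \nu_0\circ \mathsf{S}(\nu_0)$. This is the bottom-left diagram of (\ref{diag:overlineS-alg}), and it is the right diagram of (\ref{diag:S-alg}).

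These two identities are precisely the definition of an $\mathsf{S}$-algebra, so $(A_0,\nu_0)$ is one. There is no real obstacle here. The only point needing care is confirming the first components of $\overline{\mathsf{S}}(\nu_0,\nu_1) = (\mathsf{S}(\nu_0), \mathsf{S}(\nu_0)\otimes\nu_1)$ and of $\overline{\mu}$ from their definitions. The compatibility square (\ref{diag:overlineS-alg-maps}) and the second-component diagrams are not needed for this lemma.
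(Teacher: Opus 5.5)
Your proposal is correct and matches the paper's argument: the paper also just observes that the two leftmost diagrams of (\ref{diag:overlineS-alg}) are precisely the $\mathsf{S}$-algebra axioms (\ref{diag:S-alg}) for $(A_0,\nu_0)$. Those diagrams are exactly the first components of the $\overline{\mathsf{S}}$-algebra unit and associativity equations that you extracted.
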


Since an $\mathsf{S}$-derivation is a map from an $\mathsf{S}$-algebra to a module over such algebra, we would now like to show that $A_1$ is in fact an $(A_0, \nu_0)$-module. So define the map $\alpha_{\nu_1}: A_0 \otimes A_1 \to A_1$ as the following composite: 
\begin{align}
\alpha_{\nu_1} := \xymatrixcolsep{5pc}\xymatrix{ A_0 \otimes A_1 \ar[r]^-{\eta_{A_0} \otimes 1_{A_1}} & \mathsf{S}(A_0) \otimes A_1 \ar[r]^-{\nu_1} & A_1.
  } 
\end{align}
Before we prove that this indeed gives an action, we need the following technical lemma: 

\begin{lemma}If $\left( \left( \phi: A_0 \to A_1 \right), (\nu_0, \nu_1) \right)$ is an $\overline{\mathsf{S}}$-algebra, then the following diagrams commute: 
\begin{equation}\begin{gathered}\label{diag:eta-nu0-nu1} \xymatrixcolsep{5pc}\xymatrix{ \mathsf{S}(A_0) \otimes A_1 \ar[drr]_-{\nu_1}  \ar[r]^-{\nu_0 \otimes 1_{A_1}} & A_0 \otimes A_1 \ar[r]^-{\eta_{A_0} \otimes 1_{A_1}} & \mathsf{S}(A_0) \otimes A_1 \ar[d]^-{\nu_1}  \\
& & A_1
  } 
\end{gathered}\end{equation}
\begin{equation}\begin{gathered}\label{diag:nu1-nu1} \xymatrixcolsep{5pc}\xymatrix{ \mathsf{S}(A_0) \otimes \mathsf{S}(A_0) \otimes A_1 \ar[d]_-{\mathsf{m}_{A_0} \otimes 1_{A_1}}  \ar[r]^-{1_{\mathsf{S}(A_0)} \otimes \nu_1} & \mathsf{S}(A_0) \otimes A_1 \ar[d]^-{\nu_1}  \\
 \mathsf{S}(A_0) \otimes A_1 \ar[r]_-{\nu_1} & A_1.
  } 
\end{gathered}\end{equation}
\end{lemma}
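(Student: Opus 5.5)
Both identities will come out of the bottom-right algebra axiom in (\ref{diag:overlineS-alg}), namely
\[
\nu_1 \circ (\mathsf{m}_{A_0} \otimes 1_{A_1}) \circ (\mu_{A_0} \otimes 1_{\mathsf{S}(A_0)} \otimes 1_{A_1}) = \nu_1 \circ (\mathsf{S}(\nu_0) \otimes \nu_1).
\]
For each identity we precompose this equation with a suitable unit map and then simplify the two sides using the monad laws (\ref{diag:monad}), the monoid unit law (\ref{diag:monoid}), the fact that $\mu$ preserves the monoid unit (\ref{diag:monoid-morph}), and the top-right axiom $\nu_1 \circ (\mathsf{u}_{A_0} \otimes 1_{A_1}) = 1_{A_1}$.

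For (\ref{diag:nu1-nu1}), precompose the axiom with $\eta_{\mathsf{S}(A_0)} \otimes 1_{\mathsf{S}(A_0)} \otimes 1_{A_1}$.
\begin{enumerate}[(i)]
\item On the left side, $\mu \circ \eta_{\mathsf{S}(A_0)} = 1$, so the left side becomes $\nu_1 \circ (\mathsf{m} \otimes 1)$.
\item On the right side, naturality of $\eta$ gives $\mathsf{S}(\nu_0)\circ \eta_{\mathsf{S}(A_0)} = \eta_{A_0}\circ \nu_0$, so the right side becomes $\nu_1 \circ (\eta_{A_0}\nu_0 \otimes \nu_1)$.
\end{enumerate}
This rewritten right side is not literally $\nu_1 \circ (1 \otimes \nu_1)$, so (\ref{diag:eta-nu0-nu1}) is needed first, applied with the extra factor $\mathsf{S}(A_0)$ carried along. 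Since (\ref{diag:eta-nu0-nu1}) says $\nu_1 \circ (\eta\nu_0 \otimes 1) = \nu_1$, factoring $\eta\nu_0 \otimes \nu_1 = (\eta\nu_0 \otimes 1)\circ(1\otimes \nu_1)$ gives $\nu_1 \circ (1 \otimes \nu_1)$, as required. So the order is to prove (\ref{diag:eta-nu0-nu1}) first and then deduce (\ref{diag:nu1-nu1}).

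For (\ref{diag:eta-nu0-nu1}), precompose the axiom with $\mathsf{S}(\eta_{A_0}) \otimes \mathsf{u}_{A_0} \otimes 1_{A_1}$, the same map used in the unit-law verification of Prop~\ref{prop:monad}.
\begin{enumerate}[(i)]
\item On the left side, $\mu\circ \mathsf{S}(\eta) = 1$ and then $\mathsf{m}\circ(1\otimes \mathsf{u}) = 1$, so the left side becomes $\nu_1$.
\item On the right side, $\mathsf{S}(\nu_0)\circ \mathsf{S}(\eta) = 1$ because $\nu_0\eta=1$ (top-left axiom), and $\nu_1\circ(\mathsf{u}\otimes 1)=1$, so the right side becomes $\nu_1\circ(1\otimes 1)=\nu_1$.
\end{enumerate}
This is trivial, so $\mathsf{S}(\eta)$ is the wrong precomposition. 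Instead precompose with $\mathsf{S}(\nu_0)$-type data, specifically with $\eta_{\mathsf{S}(A_0)}\otimes \mathsf{u}_{A_0}\otimes 1_{A_1}$.
\begin{enumerate}[(i)]
\item On the left side, $\mu\eta_{\mathsf{S}}=1$ and $\mathsf{m}(1\otimes \mathsf{u})=1$, so the left side becomes $\nu_1$.
\item On the right side, $\mathsf{S}(\nu_0)\eta_{\mathsf{S}(A_0)} = \eta_{A_0}\nu_0$ by naturality of $\eta$, and $\nu_1(\mathsf{u}\otimes 1)=1$, so the right side becomes $\nu_1\circ(\eta\nu_0\otimes 1)$.
\end{enumerate}
Equating the two sides gives exactly (\ref{diag:eta-nu0-nu1}).

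The main obstacle is choosing the right precomposite in each case. Once these are chosen, the remaining bookkeeping is routine: keeping track of the tensor factor order in $\mathsf{S}\mathsf{S}(A_0)\otimes\mathsf{S}(A_0)\otimes A_1$, and checking that the $\mathsf{S}(\nu_0)\otimes\nu_1$ leg acts on the correct factors.
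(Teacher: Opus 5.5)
Your final argument is correct. For (\ref{diag:eta-nu0-nu1}) it is exactly the paper's proof: the paper also precomposes the multiplication axiom of (\ref{diag:overlineS-alg}) with $\eta_{\mathsf{S}(A_0)} \otimes \mathsf{u}_{A_0} \otimes 1_{A_1}$. It then simplifies the two sides just as you do, using $\mu \circ \eta_{\mathsf{S}} = 1$, $\mathsf{m} \circ (1 \otimes \mathsf{u}) = 1$, naturality of $\eta$, and $\nu_1 \circ (\mathsf{u}_{A_0} \otimes 1_{A_1}) = 1_{A_1}$.

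For (\ref{diag:nu1-nu1}) your route differs. The paper precomposes with $\mathsf{S}(\eta_{A_0}) \otimes 1_{\mathsf{S}(A_0)} \otimes 1_{A_1}$. This is essentially the map you discarded, but with the middle factor left free instead of filled by $\mathsf{u}_{A_0}$. Then $\mu_{A_0} \circ \mathsf{S}(\eta_{A_0}) = 1$ removes the $\mu$ on the left side. On the right side, $\mathsf{S}(\nu_0) \circ \mathsf{S}(\eta_{A_0}) = \mathsf{S}(\nu_0 \circ \eta_{A_0}) = 1$ turns it directly into $\nu_1 \circ (1_{\mathsf{S}(A_0)} \otimes \nu_1)$. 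This makes the two identities independent of each other, at the cost of using the unit law $\nu_0 \circ \eta_{A_0} = 1_{A_0}$.

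Your precomposite $\eta_{\mathsf{S}(A_0)} \otimes 1 \otimes 1$ instead leaves the residue $\eta_{A_0} \circ \nu_0$ on the first factor. So (\ref{diag:nu1-nu1}) becomes a corollary of (\ref{diag:eta-nu0-nu1}), and the order of proof matters. In exchange, your proof of both identities uses only the two $\nu_1$-axioms of (\ref{diag:overlineS-alg}), plus the monad and monoid laws and naturality of $\eta$. It never invokes the $\nu_0$-unit law.
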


\begin{proof} For the first diagram, using the first and fourth diagrams of (\ref{diag:overlineS-alg}), and the monad and monoid unit axioms, gives us that the following diagram commutes: 
\[ \xymatrixcolsep{5pc}\xymatrix{ \mathsf{S}(A_0) \otimes A_1 \ar@{=}[ddd]^-{}  \ar[dr]_-{\eta \otimes \mathsf{u} \otimes 1} \ar[r]^-{\nu_0 \otimes 1} & A_0 \otimes A_1 \ar@{}[d]|-{\text{\normalfont Nat. $\eta$ + (\ref{diag:overlineS-alg})}} \ar[r]^-{\eta \otimes 1} & \mathsf{S}(A_0) \otimes A_1 \ar[ddd]^-{\nu_1}  \\
& \mathsf{S}\mathsf{S}(A_0) \otimes \mathsf{S}(A_0) \otimes A_1 \ar@{}[dr]|-{\text{\normalfont (\ref{diag:overlineS-alg})}} \ar[d]^-{\mu \otimes  1 \otimes 1}  \ar[ur]_-{\mathsf{S}(\nu_0) \otimes \nu_1}  \ar@{}[dl]|-{\text{\normalfont (\ref{diag:monad}) + (\ref{diag:monoid})}}  \\ 
& \mathsf{S}(A_0) \otimes \mathsf{S}(A_0) \otimes A_1 \ar[dl]_-{\mathsf{m}\otimes 1} &  \\
 \mathsf{S}(A_0) \otimes A_1 \ar[rr]_-{\nu_1} & &  A_1.
  } \]
For the second diagram, we again use the first and fourth diagrams of (\ref{diag:overlineS-alg}) to show that the following diagram commutes: 
  \[ \xymatrixcolsep{5pc}\xymatrix{     \mathsf{S}(A_0) \otimes \mathsf{S}(A_0) \otimes A_1 \ar@{}[drr]|-{\text{\normalfont (\ref{diag:overlineS-alg})}} \ar[rr]^-{1_{\mathsf{S}(A_0)} \otimes \nu_1}  \ar[dr]^-{\mathsf{S}(\eta) \otimes 1 \otimes 1} \ar@{=}[d]_-{} & & \mathsf{S}(A_0) \otimes A_1   \ar[dd]^-{\nu_1} \\ 
  \mathsf{S}(A_0) \otimes \mathsf{S}(A_0) \otimes A_1 \ar@{}[drr]|-{\text{\normalfont (\ref{diag:overlineS-alg})}}  \ar@{}[ur]|(0.3){\text{\normalfont (\ref{diag:monad})}} \ar[d]_-{\mathsf{m} \otimes 1} & \mathsf{S}\mathsf{S}(A_0) \otimes \mathsf{S}(A_0) \otimes A_1 \ar[l]_-{\mu \otimes 1 \otimes 1}    \ar[ur]_-{~~~\mathsf{S}(\nu_0) \otimes \nu_1} & \\
  \mathsf{S}(A_0) \otimes A_1\ar[rr]_-{\nu_1}  & & A_1.
  }  \]  
So the desired diagrams both commute. 
\end{proof}

\begin{lemma} If $\left( \left( \phi: A_0 \to A_1 \right), (\nu_0, \nu_1) \right)$ is an $\overline{\mathsf{S}}$-algebra, then $(A_1, \alpha_{\nu_1})$ is an $(A_0, \nu_0)$-module. 
\end{lemma}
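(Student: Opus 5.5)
We have to verify the two module axioms of (\ref{diag:module}) for $\alpha_{\nu_1} = \nu_1 \circ (\eta_{A_0} \otimes 1_{A_1})$ with respect to the monoid $(A_0, \mathsf{m}^{\nu_0}, \mathsf{u}^{\nu_0})$ of (\ref{def:Salg-m-u}). The strategy is to push every $\eta_{A_0}$ and $\nu_0$ past $\nu_1$ using the technical lemma just proved, namely (\ref{diag:eta-nu0-nu1}) and (\ref{diag:nu1-nu1}), and so reduce both axioms to the monoid structure of $\mathsf{S}(A_0)$ and to the unit diagram of (\ref{diag:overlineS-alg}).

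\textbf{Unit axiom.} By naturality of $\mathsf{u}$ together with strictness, we have
\[
\alpha_{\nu_1} \circ (\mathsf{u}^{\nu_0} \otimes 1) = \nu_1 \circ (\eta \otimes 1) \circ (\nu_0 \otimes 1) \circ (\mathsf{u}_{A_0} \otimes 1).
\]
Applying (\ref{diag:eta-nu0-nu1}) turns this into $\nu_1 \circ (\mathsf{u}_{A_0} \otimes 1_{A_1})$. The top-right diagram of (\ref{diag:overlineS-alg}) then says this is the identity.

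\textbf{Associativity axiom.} We must show
\[
\alpha_{\nu_1} \circ (\mathsf{m}^{\nu_0} \otimes 1) = \alpha_{\nu_1} \circ (1 \otimes \alpha_{\nu_1}).
\]
For the left side, expand it as $\nu_1 \circ (\eta \otimes 1) \circ (\nu_0 \otimes 1) \circ (\mathsf{m} \otimes 1) \circ (\eta \otimes \eta \otimes 1)$. By (\ref{diag:eta-nu0-nu1}) this equals $\nu_1 \circ (\mathsf{m} \otimes 1) \circ (\eta \otimes \eta \otimes 1)$. For the right side, expand it as $\nu_1 \circ (\eta \otimes 1) \circ (1 \otimes \nu_1) \circ (1 \otimes \eta \otimes 1)$. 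By bifunctoriality, $(\eta \otimes 1) \circ (1 \otimes \nu_1) = (1 \otimes \nu_1) \circ (\eta \otimes 1 \otimes 1)$. Now (\ref{diag:nu1-nu1}) rewrites $\nu_1 \circ (1 \otimes \nu_1)$ as $\nu_1 \circ (\mathsf{m} \otimes 1)$, so the right side also becomes $\nu_1 \circ (\mathsf{m} \otimes 1) \circ (\eta \otimes \eta \otimes 1)$, and the two sides agree.

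\textbf{Main obstacle.} The only real content is in collapsing $\eta \circ \nu_0$ inside the left side, and that is exactly what (\ref{diag:eta-nu0-nu1}) was designed for. The rest is bookkeeping with interchange of tensor factors, which I would present as a single commutative diagram in the paper's style.
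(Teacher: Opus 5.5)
Your proof is correct and follows the paper's argument essentially step for step. The unit axiom comes from (\ref{diag:eta-nu0-nu1}) together with the top-right diagram of (\ref{diag:overlineS-alg}), and associativity comes from collapsing $\eta_{A_0} \circ \nu_0$ via (\ref{diag:eta-nu0-nu1}) on one side and using bifunctoriality plus (\ref{diag:nu1-nu1}) on the other; the only nitpick is that your first unit-step equality is just the definition $\mathsf{u}^{\nu_0} = \nu_0 \circ \mathsf{u}_{A_0}$ with functoriality of $\otimes$, so naturality of $\mathsf{u}$ is not needed.
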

\begin{proof} To prove the unit axiom, we use (\ref{diag:eta-nu0-nu1}) to show that the following diagram commutes: 
\[ \xymatrixcolsep{5pc}\xymatrix{ A_1 \ar@/_2pc/@{=}[ddrr]^-{} \ar[r]^-{\mathsf{u} \otimes 1} & \mathsf{S}(A_0) \otimes A_1 \ar@{}[d]|-{\text{\normalfont (\ref{diag:overlineS-alg})}} \ar[r]^-{\nu_0 \otimes 1}  \ar[ddr]_-{\nu_1} & A_0 \otimes A_1  \ar[d]^-{\eta \otimes 1} \ar@{}[dl]|-{\text{\normalfont ~~~~~~~~~~(\ref{diag:eta-nu0-nu1})}} \\
&  & \mathsf{S}(A_0) \otimes A_1 \ar[d]^-{\nu_1} \\
& & A_1 
  }  \]
which gives us that $\alpha_{\nu_1} \circ (1 \otimes \mathsf{u}^{\nu_0}) = 1$. For the associative axiom, using (\ref{diag:eta-nu0-nu1}) and (\ref{diag:nu1-nu1}), we get that the following diagram commutes: 
\[ \xymatrixcolsep{5pc}\xymatrix{ A_0 \otimes A_0 \otimes A_1  \ar[d]_-{1 \otimes \eta \otimes 1} \ar[r]^-{\eta \otimes \eta \otimes 1} & \mathsf{S}(A_0) \otimes  \mathsf{S}(A_0) \otimes A_1  \ar[dd]_-{1 \otimes \nu_1} \ar[r]^-{\mathsf{m} \otimes 1} &  \mathsf{S}(A_0) \otimes A_1  \ar[ddr]_-{\nu_1} \ar[r]^-{\nu_0 \otimes 1}    & A_0 \otimes A_1   \ar[d]^-{\eta \otimes 1}  \ar@{}[dl]|-{\text{\normalfont ~~~~~~~(\ref{diag:eta-nu0-nu1})}}  \\
A_0 \otimes \mathsf{S}(A_0) \otimes A_1  \ar[d]_-{1 \otimes \nu_1} & & &  \mathsf{S}(A_0) \otimes A_1  \ar[d]^-{\nu_1}\\
A_0 \otimes A_1 \ar[r]_-{\eta \otimes 1} & \mathsf{S}(A_0) \otimes A_1  \ar@{}[uur]|-{\text{\normalfont (\ref{diag:nu1-nu1})}} \ar[rr]_-{\nu_1} & & A_1 
  }  \]
which gives us that $\alpha_{\nu_1} \circ (1 \otimes  \alpha_{\nu_1}) =  \alpha_{\nu_1} \circ (\mathsf{m}^{\nu_0} \otimes 1)$. Therefore we conclude that $(A_1, \alpha_{\nu_1})$ is an $(A_0, \nu_0)$-module. 
\end{proof}

Finally, we show that that the underlying object of our $\overline{\mathsf{S}}$-algebra is in fact an $\mathsf{S}$-derivation. 

\begin{proposition}\label{prop:overlineSalg-to-der} If $\left( \left( \phi: A_0 \to A_1 \right), (\nu_0, \nu_1) \right)$ is an $\overline{\mathsf{S}}$-algebra, then $\phi: (A_0, \nu_0) \to (A_1, \alpha_{\nu_1})$ is an $\mathsf{S}$-derivation.
\end{proposition}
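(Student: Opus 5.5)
We must show $\phi \circ \nu_0 = \alpha_{\nu_1} \circ (\nu_0 \otimes \phi) \circ \mathsf{d}_{A_0}$. The plan is to compute the right-hand side directly and reduce it to the left-hand side. The previous lemmas already establish that $(A_0,\nu_0)$ is an $\mathsf{S}$-algebra and that $(A_1,\alpha_{\nu_1})$ is an $(A_0,\nu_0)$-module, so only the chain rule diagram (\ref{diag:derivation}) remains.

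First, unfold $\alpha_{\nu_1} = \nu_1 \circ (\eta_{A_0} \otimes 1_{A_1})$. The right-hand side then becomes
\[ \nu_1 \circ (\eta_{A_0} \otimes 1_{A_1}) \circ (\nu_0 \otimes 1_{A_1}) \circ (1_{\mathsf{S}(A_0)} \otimes \phi) \circ \mathsf{d}_{A_0}, \]
where we split $\nu_0 \otimes \phi = (\nu_0 \otimes 1_{A_1}) \circ (1_{\mathsf{S}(A_0)} \otimes \phi)$ by bifunctoriality of $\otimes$. Next, apply the technical lemma (\ref{diag:eta-nu0-nu1}), which says $\nu_1 \circ (\eta_{A_0} \otimes 1_{A_1}) \circ (\nu_0 \otimes 1_{A_1}) = \nu_1$. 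This collapses the expression to
\[ \nu_1 \circ (1_{\mathsf{S}(A_0)} \otimes \phi) \circ \mathsf{d}_{A_0}. \]
Finally, the condition (\ref{diag:overlineS-alg-maps}), which says that $(\nu_0,\nu_1)$ is a morphism in the arrow category, states exactly that this composite equals $\phi \circ \nu_0$. That completes the diagram.

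There is essentially no obstacle here, since the real work was done in proving (\ref{diag:eta-nu0-nu1}). The only point requiring care is the bookkeeping when factoring $\nu_0 \otimes \phi$ in the correct order so that the lemma applies verbatim. I would present the argument as a single commutative diagram with three cells: bifunctoriality, (\ref{diag:eta-nu0-nu1}), and (\ref{diag:overlineS-alg-maps}).
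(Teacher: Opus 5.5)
Your proposal is correct and follows the paper's argument: the paper's proof is a single commutative diagram built from the same cells. Those are the arrow-category condition (\ref{diag:overlineS-alg-maps}) and the identity (\ref{diag:eta-nu0-nu1}), applied after factoring $\nu_0 \otimes \phi = (\nu_0 \otimes 1_{A_1}) \circ (1_{\mathsf{S}(A_0)} \otimes \phi)$.
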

\begin{proof} By (\ref{diag:overlineS-alg-maps}) and (\ref{diag:eta-nu0-nu1}), we have that the following diagram commutes: 
\[   \xymatrixrowsep{1.75pc}\xymatrixcolsep{5pc}\xymatrix{
\mathsf{S}(A_0)\ar[rrr]^-{\nu_0} \ar[dd]_-{\mathsf{d}}  &&& A_0 \ar[dd]^-{\phi} \ar@{}[dll]|-{\text{\normalfont (\ref{diag:overlineS-alg-maps})}}   \\
& \mathsf{S}(A_0) \otimes A_1 \ar[drr]^-{\nu_1} \ar@{}[d]|-{\text{\normalfont (\ref{diag:eta-nu0-nu1})}}  \\ 
 \mathsf{S}(A_0) \otimes A_0 \ar[ur]^-{1 \otimes \phi} \ar[r]_-{\nu_0 \otimes \phi} & A_0 \otimes A_1 \ar[r]_-{\eta \otimes 1} &  \mathsf{S}(A_0) \otimes A_1 \ar[r]_-{\nu_1} & A_1. 
}\]
Therefore $\phi$ satisfies (\ref{diag:derivation}), and so $\phi$ is an $\mathsf{S}$-derivation as desired. 
\end{proof}

We will now define a functor from the category of $\overline{\mathsf{S}}$-algebras to the category of $\mathsf{S}$-derivations. To do so, let first consider an $\overline{\mathsf{S}}$-algebra morphism $(f_0, f_1): \left( \left( \phi: A_0 \to A_1 \right), (\nu_0, \nu_1) \right) \to \left( \left( \phi^\prime: A^\prime_0 \to A^\prime_1 \right), (\nu^\prime_0, \nu^\prime_1) \right)$. Firstly, this is a map in the arrow category $(f_0, f_1): \left( \phi: A_0 \to A_1 \right) \to \left( \phi^\prime: A^\prime_0 \to A^\prime_1 \right)$, so we have maps $f_0: A_0 \to A^\prime_0$ and $f_1: A_1 \to A_1^\prime$ which satisfy (\ref{diag:arrow-morph}). Expanding out (\ref{diag:SAlg-morph}) tells us that the following diagrams commute: 
\begin{equation}\begin{gathered}\label{diag:overlineSAlg-morph}\xymatrixrowsep{1.75pc}\xymatrixcolsep{5pc}\xymatrix{\mathsf{S}(A_0) \ar[r]^-{\mathsf{S}(f_0)}  \ar[d]_-{\nu_0} & \mathsf{S}(A_0^\prime)  \ar[d]^-{\nu_0^\prime} & \mathsf{S}(A_0) \otimes A_1 \ar[r]^-{\mathsf{S}(f_0) \otimes f_1}  \ar[d]_-{\nu_1} & \mathsf{S}(A^\prime_0) \otimes A_1  \ar[d]^-{\nu^\prime_1} \\
      A_0 \ar[r]_-{f_0} & A_0^\prime & A_1 \ar[r]_-{f_1} & A^\prime_1 . }  \end{gathered}\end{equation}
So we define a functor $\mathcal{F}: \overline{\mathsf{S}}\text{-}\mathsf{ALG} \to \mathsf{S}\text{-}\mathsf{DER}$ on objects as \[\mathcal{F}\left( \left( \phi: A_0 \to A_1 \right), (\nu_0, \nu_1) \right) := \left( \phi: (A_0, \nu_0) \to (A_1, \alpha_{\nu_1}) \right),\] and on maps as $\mathcal{F}(f_0,f_1) = (f_0,f_1)$. 

\begin{proposition} $\mathcal{F}: \overline{\mathsf{S}}\text{-}\mathsf{ALG} \to \mathsf{S}\text{-}\mathsf{DER}$ is a functor. 
\end{proposition}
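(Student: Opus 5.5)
To prove that $\mathcal{F}$ is a functor, we first check that it is well-defined on objects and on maps. Functoriality then follows immediately, since $\mathcal{F}$ acts as the identity on the underlying pairs of maps, and composition and identities in both $\overline{\mathsf{S}}\text{-}\mathsf{ALG}$ and $\mathsf{S}\text{-}\mathsf{DER}$ are computed componentwise in $\mathbb{X}$. Well-definedness on objects is exactly Proposition \ref{prop:overlineSalg-to-der}, together with the two preceding lemmas showing that $(A_0,\nu_0)$ is an $\mathsf{S}$-algebra and $(A_1,\alpha_{\nu_1})$ is an $(A_0,\nu_0)$-module.

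For maps, let $(f_0,f_1)$ be an $\overline{\mathsf{S}}$-algebra morphism. We must show three things:
\begin{enumerate}[(a)]
\item $f_0:(A_0,\nu_0)\to(A_0^\prime,\nu_0^\prime)$ is an $\mathsf{S}$-algebra morphism;
\item the right-hand square of (\ref{diag:derivation-morph}) commutes, namely $\phi^\prime\circ f_0 = f_1\circ\phi$;
\item the left-hand square of (\ref{diag:derivation-morph}) commutes, namely $f_1\circ\alpha_{\nu_1} = \alpha_{\nu_1^\prime}\circ(f_0\otimes f_1)$.
\end{enumerate}
Item (a) is precisely the left diagram of (\ref{diag:overlineSAlg-morph}). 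Item (b) is precisely (\ref{diag:arrow-morph}), since $(f_0,f_1)$ is in particular a map in the arrow category.

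Item (c) is the only step requiring a short computation. Unfolding $\alpha_{\nu_1}=\nu_1\circ(\eta_{A_0}\otimes 1_{A_1})$, we compute
\[ f_1\circ\nu_1\circ(\eta\otimes 1) = \nu_1^\prime\circ(\mathsf{S}(f_0)\otimes f_1)\circ(\eta\otimes 1) = \nu_1^\prime\circ(\eta\otimes 1)\circ(f_0\otimes f_1). \]
The first equality uses the right diagram of (\ref{diag:overlineSAlg-morph}), and the second uses naturality of $\eta$, i.e. $\mathsf{S}(f_0)\circ\eta_{A_0}=\eta_{A_0^\prime}\circ f_0$. The right-hand side is $\alpha_{\nu_1^\prime}\circ(f_0\otimes f_1)$, as required.

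The argument is routine throughout. The only point needing care is (c), where the key observation is that the module action is built from $\nu_1$ by precomposing with $\eta\otimes 1$, so naturality of $\eta$ is exactly what transports the $\overline{\mathsf{S}}$-algebra morphism condition to the module morphism condition. Note also that $f_0$ is automatically a monoid morphism for the induced monoid structures, by the remark following (\ref{def:Salg-m-u}), though the definition of $\mathsf{S}$-derivation morphism only requires it to be an $\mathsf{S}$-algebra morphism.
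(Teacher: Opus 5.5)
Your proposal is correct and follows essentially the same route as the paper. Both arguments get well-definedness on objects from Prop \ref{prop:overlineSalg-to-der}, and the $\mathsf{S}$-algebra morphism condition and the square $\phi^\prime\circ f_0=f_1\circ\phi$ directly from (\ref{diag:overlineSAlg-morph}) and (\ref{diag:arrow-morph}); both then obtain the module-morphism square by combining naturality of $\eta$ with the right diagram of (\ref{diag:overlineSAlg-morph}), exactly as in your computation (c).
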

\begin{proof} First we need to explain why $\mathcal{F}$ is well-defined. By Prop \ref{prop:overlineSalg-to-der}, $\mathcal{F}$ is well-defined on objects. On maps, let $(f_0, f_1): \left( \left( \phi: A_0 \to A_1 \right), (\nu_0, \nu_1) \right) \to \left( \left( \phi^\prime: A^\prime_0 \to A^\prime_1 \right), (\nu^\prime_0, \nu^\prime_1) \right)$ be an $\overline{\mathsf{S}}$-algebra morphism. The left diagram of (\ref{diag:overlineSAlg-morph}) tells us that $f_0: (A_0, \nu_0) \to (A^\prime_0, \nu^\prime_0)$ is an $\mathsf{S}$-algebra morphism. So it remains to show that $(f_0, f_1)$ satisfies the two diagrams from (\ref{diag:derivation-morph}). However, notice that $(f_0, f_1)$ satisfying (\ref{diag:arrow-morph}) precisely implies that the right diagram of (\ref{diag:derivation-morph}) holds. On the other hand, using the right diagram of (\ref{diag:overlineSAlg-morph}), we have that the following diagram commutes
\[\xymatrixrowsep{1.75pc}\xymatrixcolsep{5pc}\xymatrix{A_0 \otimes A_1 \ar@{}[dr]|-{\text{\normalfont Nat. of $\eta$}} \ar[d]_-{f_0 \otimes f_1} \ar[r]^-{\eta \otimes 1} & \mathsf{S}(A_0) \otimes A_1 \ar@{}[dr]|-{\text{\normalfont (\ref{diag:overlineSAlg-morph})}} \ar[d]^-{\mathsf{S}(f_0) \otimes f_1} \ar[r]^-{\nu_1} & A_1   \ar[d]^-{f_1} \\
A^\prime_0 \otimes A^\prime_1 \ar[r]_-{\eta \otimes 1} & \mathsf{S}(A^\prime_0) \otimes A^\prime_1 \ar[r]_-{\nu^\prime_1} & A^\prime_1} \]
which gives us that $(f_0, f_1)$ also satisfies the left diagram of (\ref{diag:derivation-morph}). Thus we have that 
\[(f_0,f_1): \left( \phi: (A_0, \nu_0) \to (A_1, \alpha_{\nu_1}) \right) \to \left( \phi^\prime: (A^\prime_0, \nu^\prime_0) \to (A^\prime_1, \alpha_{\nu^\prime_1}) \right)\]
is indeed a map in $\mathsf{S}\text{-}\mathsf{DER}$. So $\mathcal{F}$ is well-defined. Functoriality of $\mathcal{F}$ is essentially automatic. 
\end{proof}

Let us now go the other direction. So suppose $\mathsf{D}: (A,\nu) \to (M,\alpha)$ is an $\mathsf{S}$-derivation. Of course this means that $\mathsf{D}: A \to M$ is an object in the arrow category, and expanding out $\overline{\mathsf{S}}\left( \mathsf{D}: A \to M \right)$ gives us the composite: 
\begin{align*}
\overline{\mathsf{S}}\left(\mathsf{D}: A \to M \right):= \left( \xymatrixcolsep{3pc}\xymatrix{  \mathsf{S}(A) \ar[r]^-{\mathsf{d}_{A}} & \mathsf{S}(A) \otimes A \ar[r]^-{1_{\mathsf{S}(A)} \otimes \mathsf{D}} & \mathsf{S}(A) \otimes M 
  }\right).
\end{align*}
Now define the map $\nu_{\alpha}: \mathsf{S}(A) \otimes M \to M$ as the following composite: 
\begin{align}
\nu_{\alpha} := \xymatrixcolsep{5pc}\xymatrix{ \mathsf{S}(A) \otimes M \ar[r]^-{\nu \otimes 1_{M}} & A \otimes M \ar[r]^-{\alpha} & M.
  } 
\end{align}

\begin{lemma} $(\nu, \nu_\alpha): \overline{\mathsf{S}}\left( \mathsf{D}: A \to M \right) \to \left( \mathsf{D}: A \to M \right)$ is a map in the arrow category. 
\end{lemma}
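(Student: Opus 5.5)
We need to check that the square (\ref{diag:arrow-morph}) commutes for the pair $(\nu, \nu_\alpha)$, with source $\overline{\mathsf{S}}(\mathsf{D}: A \to M)$ and target $\mathsf{D}: A \to M$. That is, we must show
\[ \mathsf{D} \circ \nu = \nu_\alpha \circ (1_{\mathsf{S}(A)} \otimes \mathsf{D}) \circ \mathsf{d}_A . \]
The plan is to unfold $\nu_\alpha$ and reduce this directly to the defining chain rule (\ref{diag:derivation}) of an $\mathsf{S}$-derivation.

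First, substitute $\nu_\alpha = \alpha \circ (\nu \otimes 1_M)$ into the right-hand side, giving
\[ \alpha \circ (\nu \otimes 1_M) \circ (1_{\mathsf{S}(A)} \otimes \mathsf{D}) \circ \mathsf{d}_A . \]
Second, use functoriality of $\otimes$ (interchange) to rewrite $(\nu \otimes 1_M) \circ (1_{\mathsf{S}(A)} \otimes \mathsf{D})$ as $\nu \otimes \mathsf{D}$. The right-hand side then becomes $\alpha \circ (\nu \otimes \mathsf{D}) \circ \mathsf{d}_A$.

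By (\ref{diag:derivation}), this is exactly $\mathsf{D} \circ \nu$. Diagrammatically, this amounts to pasting the interchange triangle onto the square (\ref{diag:derivation}), mirroring the diagram in the proof of Prop \ref{prop:overlineSalg-to-der} read in reverse. Hence $(\nu, \nu_\alpha)$ satisfies (\ref{diag:arrow-morph}) and is a map in the arrow category.

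There is no real obstacle here. The only point requiring care is noting that the interchange step is purely bifunctoriality of $\otimes$, so no axiom of the differential modality beyond the $\mathsf{S}$-derivation axiom itself is needed. In particular, the module axioms of $(M,\alpha)$ and the $\mathsf{S}$-algebra axioms of $(A,\nu)$ play no role in this lemma. They will only be needed later, when verifying that $(\nu, \nu_\alpha)$ satisfies the $\overline{\mathsf{S}}$-algebra axioms (\ref{diag:overlineS-alg}).
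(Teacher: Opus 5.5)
Your proposal is correct and follows the paper's proof: the paper likewise rewrites the chain rule (\ref{diag:derivation}) by splitting $\nu \otimes \mathsf{D}$ as $(\nu \otimes 1_M) \circ (1_{\mathsf{S}(A)} \otimes \mathsf{D})$. The resulting square is exactly (\ref{diag:arrow-morph}) for the pair $(\nu, \nu_\alpha)$. Your observation that the algebra and module axioms are not needed here is also accurate; the paper uses them only in the next proposition.
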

\begin{proof} We may rewrite the $\mathsf{S}$-derivation chain rule as follows: 
\[ \xymatrixrowsep{1.75pc}\xymatrixcolsep{5pc}\xymatrix{  \mathsf{S}(A) \textbf{\ar@{}[ddrr]|-{\text{\normalfont (\ref{diag:derivation})}}} \ar[dd]_-{\nu}  \ar[r]^-{\mathsf{d}} & \mathsf{S}(A) \otimes A \ar[r]^-{1 \otimes \mathsf{D}} & \mathsf{S}(A) \otimes M \ar[d]^-{\nu \otimes 1} \\
& & A \otimes M \ar[d]^-{\alpha} \\
A \ar[rr]_-{\mathsf{D}} & & M 
  } \]
which says precisely that $(\nu, \nu_\alpha): \overline{\mathsf{S}}\left( \mathsf{D}: A \to M \right) \to \left( \mathsf{D}: A \to M \right)$ is a map in the arrow category. 
\end{proof}

\begin{proposition}\label{prop:der-to-overlineSalg} If $\mathsf{D}: (A,\nu) \to (M,\alpha)$ is an $\mathsf{S}$-derivation, then $\left(\left( \mathsf{D}: A \to M \right), (\nu, \nu_{\alpha}) \right)$ is an $\overline{\mathsf{S}}$-algebra. 
\end{proposition}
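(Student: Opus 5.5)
By the preceding lemma, $(\nu,\nu_\alpha)$ is already a map $\overline{\mathsf{S}}(\mathsf{D}) \to \mathsf{D}$ in the arrow category. It therefore remains to verify the four diagrams of (\ref{diag:overlineS-alg}) with $\nu_0 = \nu$ and $\nu_1 = \nu_\alpha = \alpha \circ (\nu \otimes 1_M)$.

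The two left diagrams are exactly the $\mathsf{S}$-algebra axioms (\ref{diag:S-alg}) for $(A,\nu)$, so they hold immediately.

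For the top right diagram we must show $\alpha \circ (\nu \otimes 1) \circ (\mathsf{u}_A \otimes 1) = 1_M$. Since $\nu \circ \mathsf{u}_A = \mathsf{u}^\nu$ by the definition (\ref{def:Salg-m-u}), this is precisely the module unit axiom in (\ref{diag:module}).

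The bottom right diagram is the only one requiring work. We must show
\[
\alpha \circ (\nu \otimes 1) \circ (\mathsf{m}_A \otimes 1) \circ (\mu_A \otimes 1 \otimes 1)
= \alpha \circ (\nu \otimes 1) \circ \big(\mathsf{S}(\nu) \otimes (\alpha \circ (\nu \otimes 1))\big).
\]
The key input is that $\nu : (\mathsf{S}(A), \mathsf{m}_A, \mathsf{u}_A) \to (A, \mathsf{m}^\nu, \mathsf{u}^\nu)$ is a monoid morphism. This is the standard fact from \cite[Thm 3.12]{blute2015derivations}: $\nu$ is an $\mathsf{S}$-algebra morphism $(\mathsf{S}(A),\mu_A) \to (A,\nu)$, hence a monoid morphism between the induced monoid structures, and the induced structure on the free algebra is $\mathsf{m}_A$.

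I then transform the left-hand side in four steps.
\begin{enumerate}[(1)]
\item Use $\nu \circ \mathsf{m}_A = \mathsf{m}^\nu \circ (\nu \otimes \nu)$ to rewrite it as $\alpha \circ (\mathsf{m}^\nu \otimes 1) \circ (\nu\mu \otimes \nu \otimes 1)$.
\item Apply module associativity (\ref{diag:module}) to obtain $\alpha \circ (1 \otimes \alpha) \circ (\nu\mu \otimes \nu \otimes 1)$.
\item Replace $\nu \circ \mu$ by $\nu \circ \mathsf{S}(\nu)$ using the $\mathsf{S}$-algebra axiom.
\item Regroup as $\alpha \circ (\nu \otimes 1) \circ (\mathsf{S}(\nu) \otimes \alpha(\nu \otimes 1))$ by bifunctoriality of $\otimes$.
\end{enumerate}
The result is exactly the right-hand side.

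The main obstacle is just the monoid-morphism property of $\nu$. If one prefers not to cite it, it can be derived from the algebra axioms together with naturality of $\mathsf{m}$ and the fact that $\mu$ preserves $\mathsf{m}$. Concretely,
\[
\nu \mathsf{m}_A = \nu \mu \mathsf{m}_{\mathsf{S}A} (\eta \otimes \eta)
= \nu \mathsf{S}(\nu) \mathsf{m}_{\mathsf{S}A} (\eta \otimes \eta)
= \nu \mathsf{m}_A (\mathsf{S}\nu\, \eta \otimes \mathsf{S}\nu\, \eta)
= \nu \mathsf{m}_A (\eta\nu \otimes \eta\nu).
\]
Here the first equality uses $\mu\,\mathsf{m}_{\mathsf{S}A} = \mathsf{m}_A(\mu \otimes \mu)$ and $\mu\eta = 1$, the second uses $\nu\mu = \nu\mathsf{S}(\nu)$, the third uses naturality of $\mathsf{m}$, and the last uses naturality of $\eta$. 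The final expression is $\mathsf{m}^\nu(\nu \otimes \nu)$.
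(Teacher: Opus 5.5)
Your proposal is correct and follows essentially the same route as the paper. The two left diagrams and the top-right one come directly from the $\mathsf{S}$-algebra and module axioms. The bottom-right one is the same diagram chase, combining the monoid-morphism property of $\nu\colon(\mathsf{S}(A),\mathsf{m}_A,\mathsf{u}_A)\to(A,\mathsf{m}^\nu,\mathsf{u}^\nu)$, the algebra associativity $\nu\mu=\nu\mathsf{S}(\nu)$, and module associativity. Your explicit derivation of $\nu\,\mathsf{m}_A=\mathsf{m}^\nu(\nu\otimes\nu)$ is also correct and just unpacks the fact the paper cites.
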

\begin{proof} We need to show that the four diagrams of (\ref{diag:overlineS-alg}) hold. However notice that since $(A,\nu)$ is an $\mathsf{S}$-algebra and $(M, \alpha)$ is an $(A,\nu)$-module, the two left most diagrams and the top right diagram of (\ref{diag:overlineS-alg}) already hold. So we need only show the bottom right diagram of (\ref{diag:overlineS-alg}). To do so, recall that every $\mathsf{S}$-algebra morphism is also a monoid morphism. So since $\nu: (\mathsf{S}(A), \mu_A) \to (A,\nu)$ is an $\mathsf{S}$-algebra morphism, we also have that $\nu: (\mathsf{S}(A), \mathsf{m}_A, \mathsf{u}_A) \to (A, \mathsf{m}^\nu, \mathsf{u}^\nu)$ is a monoid morphism. Using this, and the $\mathsf{S}$-algebra and module associativity axioms, we have that the following diagram commutes: 
\[ \xymatrixrowsep{1.75pc}\xymatrixcolsep{5pc}\xymatrix{  \mathsf{S}\mathsf{S}(A) \otimes \mathsf{S}(A) \otimes M \textbf{\ar@{}[dr]|-{\text{\normalfont (\ref{diag:S-alg})}}}  \ar[d]_-{\mathsf{S}(\nu) \otimes \nu \otimes 1} \ar[r]^-{\mu \otimes 1 \otimes 1} & \mathsf{S}(A) \otimes \mathsf{S}(A) \otimes M \ar[d]_-{\nu \otimes \nu \otimes 1} \ar[r]^-{\mathsf{m} \otimes 1} \textbf{\ar@{}[dr]|-{\text{\normalfont (\ref{diag:monoid-morph})}}} &  \mathsf{S}(A) \otimes M \ar[d]^-{\nu \otimes 1} \\
\mathsf{S}(A) \otimes A \otimes M  \ar[r]_-{\nu \otimes 1 \otimes 1} \ar[d]_-{1 \otimes \alpha} & A \otimes A \otimes M \textbf{\ar@{}[dr]|-{\text{\normalfont (\ref{diag:module})}}}  \ar[d]_-{1 \otimes \alpha} \ar[r]^-{\mathsf{m}^\nu \otimes 1} & A \otimes M \ar[d]^-{1 \otimes \alpha} \\ 
\mathsf{S}(A) \otimes M \ar[r]_-{\nu \otimes 1} & A \otimes M \ar[r]_-{\alpha} & M 
  } \]
giving us that the bottom right diagram of (\ref{diag:overlineS-alg}) holds. Therefore, we have that $(\nu, \nu_{\alpha}): \overline{\mathsf{S}}\left( \mathsf{D}: A \to M \right) \to \left( \mathsf{D}: A \to M \right)$ is a map in the arrow category. 
\end{proof}

We now define a functor $\mathcal{F}^{-1}: \mathsf{S}\text{-}\mathsf{DER} \to \overline{\mathsf{S}}\text{-}\mathsf{ALG}$ on objects as \[\mathcal{F}^{-1}\left( \mathsf{D}: (A,\nu) \to (M,\alpha)\right) = \left(\left( \mathsf{D}: A \to M \right), (\nu, \nu_{\alpha}) \right)\] and on maps as $\mathcal{F}^{-1}(f,g) = (f,g)$. 

\begin{proposition} $\mathcal{F}^{-1}: \mathsf{S}\text{-}\mathsf{DER} \to \overline{\mathsf{S}}\text{-}\mathsf{ALG}$ is a functor. 
\end{proposition}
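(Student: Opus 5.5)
We must check two things: that $\mathcal{F}^{-1}$ is well-defined on objects and on maps, and that it preserves identities and composition. Well-definedness on objects is exactly Prop \ref{prop:der-to-overlineSalg}. Functoriality is automatic, since $\mathcal{F}^{-1}$ acts as the identity on the underlying pairs of maps and composition and identities in both $\mathsf{S}\text{-}\mathsf{DER}$ and $\overline{\mathsf{S}}\text{-}\mathsf{ALG}$ are computed componentwise in $\mathbb{X}$. So the only real content is that an $\mathsf{S}$-derivation morphism gives an $\overline{\mathsf{S}}$-algebra morphism.

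Let $(f,g): \left(\mathsf{D}: (A,\nu) \to (M,\alpha)\right) \to \left(\mathsf{D}^\prime: (A^\prime,\nu^\prime) \to (M^\prime,\alpha^\prime)\right)$ be an $\mathsf{S}$-derivation morphism. First, $(f,g)$ is a map in the arrow category from $\mathsf{D}$ to $\mathsf{D}^\prime$, because the right diagram of (\ref{diag:derivation-morph}) is literally (\ref{diag:arrow-morph}). It remains to verify the two diagrams of (\ref{diag:overlineSAlg-morph}) for the structure maps $(\nu,\nu_\alpha)$ and $(\nu^\prime,\nu^\prime_{\alpha^\prime})$.

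The left diagram says $f \circ \nu = \nu^\prime \circ \mathsf{S}(f)$, which holds because $f$ is an $\mathsf{S}$-algebra morphism (\ref{diag:SAlg-morph}). For the right diagram we must show $g \circ \alpha \circ (\nu \otimes 1_M) = \alpha^\prime \circ (\nu^\prime \otimes 1_{M^\prime}) \circ (\mathsf{S}(f) \otimes g)$. I will paste two squares. The first is $(\nu^\prime \otimes 1) \circ (\mathsf{S}(f) \otimes g) = (f \otimes g) \circ (\nu \otimes 1)$, which is again the $\mathsf{S}$-algebra morphism square tensored with $g$, using bifunctoriality of $\otimes$. The second is $\alpha^\prime \circ (f \otimes g) = g \circ \alpha$, which is the left diagram of (\ref{diag:derivation-morph}).

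I expect no genuine obstacle. The only step requiring a moment's care is this last pasting, specifically correctly rewriting $\mathsf{S}(f)\otimes g$ followed by $\nu^\prime\otimes 1$ via interchange in the monoidal product.
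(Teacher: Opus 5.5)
Your proposal is correct and follows the paper's proof essentially step for step. You use Prop \ref{prop:der-to-overlineSalg} for objects, identify the right diagram of (\ref{diag:derivation-morph}) with (\ref{diag:arrow-morph}), and identify the left diagram of (\ref{diag:overlineSAlg-morph}) with the $\mathsf{S}$-algebra morphism square. You then obtain the right diagram by pasting that square, tensored with $g$, onto the left diagram of (\ref{diag:derivation-morph}), exactly as the paper does.
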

\begin{proof} First we need to explain why $\mathcal{F}^{-1}$ is well-defined. By Prop \ref{prop:der-to-overlineSalg}, $\mathcal{F}$ is well-defined on objects. On maps, let $(f, g): \left(\mathsf{D}: (A,\nu) \to (M,\alpha) \right) \to \left( \mathsf{D}^\prime: (A^\prime,\nu^\prime) \to (M^\prime,\alpha^\prime) \right)$ be a map in $\mathsf{S}\text{-}\mathsf{DER}$, which recall means that $f: (A,\nu) \to (A^\prime,\nu^\prime)$ is an $\mathsf{S}$-algebra morphism and (\ref{diag:derivation-morph}) holds. We first need to explain why $(f,g)$ is a map in the arrow category. However note that the right diagram of (\ref{diag:derivation-morph}) precisely says that $(f,g)$ satisfies (\ref{diag:arrow-morph}), and so $(f,g): \left( \mathsf{D}: A \to M \right) \to \left( \mathsf{D}^\prime: A^\prime \to M^\prime \right)$ is a map in the arrow category. Next we need to show that $(f,g)$ also satisfies the two diagram of (\ref{diag:overlineSAlg-morph}). However notice that since $f$ is an $\mathsf{S}$-algebra morphism, (\ref{diag:SAlg-morph}) is the same as the left diagram of (\ref{diag:overlineSAlg-morph}). So it remains to show the right diagram of (\ref{diag:overlineSAlg-morph}). To do so, combining that $f$ is an $\mathsf{S}$-algebra morphism with the left diagram of (\ref{diag:derivation-morph}) gives us that the following diagram commutes: 
\[ \xymatrixrowsep{1.75pc}\xymatrixcolsep{5pc}\xymatrix{ \mathsf{S}(A) \otimes M \textbf{\ar@{}[dr]|-{\text{\normalfont (\ref{diag:SAlg-morph})}}}  \ar[d]_-{\nu \otimes 1} \ar[r]^-{\mathsf{S}(f) \otimes g} & \mathsf{S}(A^\prime) \otimes M^\prime \ar[d]^-{\nu^\prime \otimes 1} \\
A \otimes M \ar[d]_-{\alpha}  \textbf{\ar@{}[dr]|-{\text{\normalfont (\ref{diag:derivation-morph})}}} \ar[r]^-{f \otimes g}& A^\prime \otimes M^\prime \ar[d]^-{\alpha^\prime} \\ 
M \ar[r]_-{g} &  M^\prime. 
  } \]
So $(f,g): \left(\left( \mathsf{D}: A \to M \right), (\nu, \nu_{\alpha}) \right) \to \left(\left( \mathsf{D}^\prime: A^\prime \to M^\prime \right), (\nu^\prime, \nu^\prime_{\alpha^\prime}) \right)$ is indeed an $\overline{\mathsf{S}}$-algebra morphism. So $\mathcal{F}^{-1}$ is well-defined. Functoriality of $\mathcal{F}^{-1}$ is essentially automatic, so we get that $\mathcal{F}^{-1}$ is indeed a functor. 
\end{proof}

Bringing all this together, we conclude this section with the main result of this paper. 

\begin{theorem}\label{thm:deri=alg} We have an isomorphism of category $\overline{\mathsf{S}}\text{-}\mathsf{ALG} \cong \mathsf{S}\text{-}\mathsf{DER}$. Explicitly, the functors from Prop \ref{prop:overlineSalg-to-der} and Prop \ref{prop:der-to-overlineSalg} are inverses of each other. 
\end{theorem}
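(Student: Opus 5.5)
Since both $\mathcal{F}$ and $\mathcal{F}^{-1}$ are the identity on morphisms, $(f_0,f_1) \mapsto (f_0,f_1)$, the plan is to check only that the two composites are the identity on objects. Functoriality and well-definedness are already given by the preceding propositions. Once the object parts are shown to be mutually inverse, the composites $\mathcal{F}^{-1}\circ\mathcal{F}$ and $\mathcal{F}\circ\mathcal{F}^{-1}$ are identity functors on the nose, which gives the isomorphism $\overline{\mathsf{S}}\text{-}\mathsf{ALG} \cong \mathsf{S}\text{-}\mathsf{DER}$.

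\emph{First composite.} Start with an $\mathsf{S}$-derivation $\mathsf{D}: (A,\nu) \to (M,\alpha)$. Then $\mathcal{F}\mathcal{F}^{-1}$ returns $\mathsf{D}: (A,\nu) \to (M, \alpha_{\nu_\alpha})$, and we must show $\alpha_{\nu_\alpha} = \alpha$. Unfolding the definitions gives
\[\alpha_{\nu_\alpha} = \nu_\alpha \circ (\eta_A \otimes 1_M) = \alpha \circ (\nu \otimes 1_M) \circ (\eta_A \otimes 1_M) = \alpha,\]
using the $\mathsf{S}$-algebra unit law $\nu \circ \eta_A = 1_A$. The underlying $\mathsf{S}$-algebra $(A,\nu)$ and the map $\mathsf{D}$ are visibly unchanged.

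\emph{Second composite.} Start with an $\overline{\mathsf{S}}$-algebra $\left((\phi: A_0 \to A_1), (\nu_0,\nu_1)\right)$. Then $\mathcal{F}^{-1}\mathcal{F}$ returns $\left((\phi: A_0 \to A_1), (\nu_0, (\nu_1)_{\alpha_{\nu_1}})\right)$, so we must show $(\nu_1)_{\alpha_{\nu_1}} = \nu_1$. By definition,
\[(\nu_1)_{\alpha_{\nu_1}} = \alpha_{\nu_1} \circ (\nu_0 \otimes 1_{A_1}) = \nu_1 \circ (\eta_{A_0} \otimes 1_{A_1}) \circ (\nu_0 \otimes 1_{A_1}),\]
and this equals $\nu_1$ precisely by the technical identity (\ref{diag:eta-nu0-nu1}) proved earlier.

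The only nontrivial content is this second identity, namely that $\nu_1$ is recovered from its restriction along $\eta_{A_0}\otimes 1$. That was the main obstacle, and it has already been handled by the lemma establishing (\ref{diag:eta-nu0-nu1}), which used the unit and multiplication axioms of the $\overline{\mathsf{S}}$-algebra. With it in hand, the remaining steps are immediate.
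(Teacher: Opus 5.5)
Your proposal is correct and matches the paper's proof: the paper also checks only the object parts, showing $\alpha_{\nu_\alpha} = \alpha$ from the $\mathsf{S}$-algebra unit law $\nu \circ \eta_A = 1_A$, and $\nu_{\alpha_{\nu_1}} = \alpha_{\nu_1} \circ (\nu_0 \otimes 1_{A_1}) = \nu_1$ from (\ref{diag:eta-nu0-nu1}). Since both functors act as the identity on morphisms, this suffices.
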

\begin{proof} It suffices to show that the constructions of Prop \ref{prop:overlineSalg-to-der} and Prop \ref{prop:der-to-overlineSalg} are inverses of each other. So let us start with an $\overline{\mathsf{S}}$-algebra $\left( \left( \phi: A_0 \to A_1 \right), (\nu_0, \nu_1) \right)$ and build its induced $\mathsf{S}$-derivation $\phi: (A_0, \nu_0) \to (A_1, \alpha_{\nu_1})$. Applying Prop \ref{prop:der-to-overlineSalg} to this $\mathsf{S}$-derivation gives us the $\overline{\mathsf{S}}$-algebra $\left( \left( \phi: A_0 \to A_1 \right), (\nu_0, \nu_{\alpha_{\nu_1}}) \right)$. To show that this is the same $\overline{\mathsf{S}}$-algebra as the one we started with, we need to show that $\nu_{\alpha_{\nu_1}}$ is equal to $\nu_1$. Expanding out $\nu_{\alpha_{\nu_1}}$ gives us the following composite: 
\[ \xymatrixcolsep{5pc}\xymatrix{ \mathsf{S}(A_0) \otimes A_1 \ar[r]^-{\nu_0 \otimes 1_{A_1}} & A_0 \otimes A_1 \ar[r]^-{\eta_{A_0} \otimes 1_{A_1}} &  \mathsf{S}(A_0) \otimes A_1 \ar[r]^-{\nu_1} & A_1.
  }  \]
  However by (\ref{diag:eta-nu0-nu1}), the above composite is equal to $\nu_1$, so $\nu_{\alpha_{\nu_1}}=\nu_1$. Conversely, suppose we instead start with an $\mathsf{S}$-derivation $\mathsf{D}: (A,\nu) \to (M,\alpha)$ and then build its induced $\overline{\mathsf{S}}$-algebra $\left(\left( \mathsf{D}: A \to M \right), (\nu, \nu_{\alpha}) \right)$. Applying Prop \ref{prop:overlineSalg-to-der} to it gives us the $\mathsf{S}$-derivation $\mathsf{D}: (A,\nu) \to (M,\alpha_{\nu_{\alpha}})$. To show that this is the same $\mathsf{S}$-derivation as the one we started with, we need to show that $\alpha_{\nu_{\alpha}}$ is equal to $\alpha$. Expanding out $\alpha_{\nu_{\alpha}}$ gives us: 
\[ \xymatrixcolsep{5pc}\xymatrix{ A \otimes M \ar[r]^-{\eta_A \otimes 1_{M}} & \mathsf{S}(A) \otimes M \ar[r]^-{\nu \otimes 1_M} &  A \otimes M \ar[r]^-{\alpha} & M.
  }  \]
  However, by (\ref{diag:S-alg}), the above composite is simply equal to $\alpha$, so $\alpha_{\nu_{\alpha}} = \alpha$. From this it easily follows that $\mathcal{F}: \overline{\mathsf{S}}\text{-}\mathsf{ALG} \to \mathsf{S}\text{-}\mathsf{DER}$ and $\mathcal{F}^{-1}: \mathsf{S}\text{-}\mathsf{DER} \to \overline{\mathsf{S}}\text{-}\mathsf{ALG}$ are inverses of each other. So $\overline{\mathsf{S}}\text{-}\mathsf{ALG} \cong \mathsf{S}\text{-}\mathsf{DER}$ as desired. 
\end{proof}

\section{Derivations as Commutative Monoids}\label{sec:der-monoid}

In Sec \ref{sec:monad} we have shown that we can lift our differential modality $\mathsf{S}$ to a monad $\overline{\mathsf{S}}$ on the arrow category. It is then natural to ask whether $\overline{\mathsf{S}}$ is in fact a differential modality itself. We will answer this question in Sec \ref{sec:diff-mod}. To be able to do so, we first need to address what monoidal structure to give the arrow category and then, in particular, to understand what commutative monoids are: this is the topic of this section. 
\smallskip

Naturally, a first choice for a monoidal product on the arrow category would be to simply take the pointwise monoidal product: 
\[\left( \phi: A_0 \to A_1 \right) \otimes (\psi: B_0 \otimes B_1) = \left( \phi \otimes \psi: A_0 \otimes B_0 \to A_1 \otimes B_1 \right).\] 
However this will not work since $\overline{\mathsf{S}}\left( \phi: A_0 \to A_1 \right)$ will not be a monoid for this monoidal product. Instead, we must consider another monoidal structure for the arrow category, one that requires our base category to have \textit{biproducts}. 

\smallskip 

For a category $\mathbb{X}$ with finite biproducts, we write the biproduct of a finite family of objects as $A_1 \oplus \dots \oplus A_n$ and $\mathsf{0}$ for the zero object. For simplicity, we work with strict biproduct, so associativity is strict and $A \oplus \mathsf{0} = A = \mathsf{0} \oplus A$. Now recall that a category with finite biproducts is canonically (and uniquely) enriched over commutative monoids, and also that maps have a very practical matrix representation. Indeed, a map $A_1 \oplus \dots \oplus A_n \to B_1 \oplus \dots \oplus B_m$ can be represented by an $m \times n$ matrix as follows: 
\[ \begin{bmatrix} f_{1,1} & f_{1,2} & \hdots & f_{1,n} \\
f_{2,1} & \ddots & \ddots & f_{2,n} \\
\vdots & \ddots & \ddots & \vdots \\
f_{m,1} & f_{m,2} & \hdots & f_{m,n}
\end{bmatrix}\]
for unique maps $f_{i,j}: A_j \to B_i$. Moreover, composition is given by matrix multiplication. 

\smallskip

Let $\mathbb{X}$ be an additive symmetric monoidal category with finite biproducts. Since the monoidal structure $\otimes$ preserves the additive structure, it follows that the monoidal structure also preserves finite products, that is, we have canonical isomorphisms:
\begin{align}
\left( A_1 \oplus \dots \oplus A_n \right) \otimes \left( B_1 \oplus \dots \oplus B_m \right) \cong \bigoplus\limits_{\substack{1 \leq i \leq n \\ 1 \leq j \leq m}} A_i \otimes B_j && A \otimes \mathsf{0} \cong \mathsf{0} \cong \mathsf{0} \otimes A.
\end{align}
In other words, $\mathbb{X}$ is a distributive monoidal category. 

\smallskip

We can now define a monoidal structure on the arrow category. This monoidal structure will be a special case of (the dual of) the \textbf{Leibniz construction} for bifunctors \cite[Def 4.4]{riehl2013theory}, specifically (the dual of) the \textbf{pushout product} over the zero object \cite[Const 11.1.7]{riehl2014categorical}, which is a construction made to give a monoidal product on the arrow category. 

\begin{definition} We define the  monoidal product $\boxtimes$ on $\mathsf{Arr}[\mathbb{X}]$ on objects as follows: 
\begin{align}
\left(\phi: A_0 \to A_1 \right) \boxtimes \left( \psi: B_0 \to B_1 \right) := \xymatrixcolsep{5pc}\xymatrix{ A_0 \otimes B_0 \ar[r]^-{\begin{bmatrix} 1_{A_0} \otimes \psi \\ \phi \otimes 1_{B_0} \end{bmatrix}} & (A_0 \otimes B_1) \oplus (A_1 \otimes B_0)
  } 
\end{align}
and given two maps $(f_0, f_1): \left( \phi: A_0 \to A_1 \right) \to \left(\phi^\prime: A_0^\prime \to A_1^\prime\right)$ and $(g_0, g_1): \left( \psi: B_0 \to B_1 \right) \to \left(\psi^\prime: B_0^\prime \to B_1^\prime\right)$, their monoidal product \[(f_0, f_1) \boxtimes  (g_0, g_1): \left( \phi: A_0 \to A_1 \right) \boxtimes \left( \psi: B_0 \to B_1 \right) \to \left(\phi^\prime: A_0^\prime \to A_1^\prime\right) \boxtimes \left(\psi^\prime: B_0^\prime \to B_1^\prime\right)\] is defined as the pair of following maps: 
\begin{equation}\begin{gathered}\label{boxtimes-maps} 
\xymatrixcolsep{3pc}\xymatrix{ A_0 \otimes B_0 \ar[r]^-{f_0 \otimes g_0} & A^\prime_0 \otimes B^\prime_0 
  } \\
  \xymatrixcolsep{5pc}\xymatrix{ (A_0 \otimes B_1) \oplus (A_1 \otimes B_0) \ar[rr]^-{\begin{bmatrix} f_0 \otimes g_1 & 0 \\ 0 & f_1 \otimes g_0  \end{bmatrix}} && (A^\prime_0 \otimes B^\prime_1) \oplus (A^\prime_1 \otimes B^\prime_0).
  }  
\end{gathered}\end{equation}
The monoidal unit is the zero map from the monoidal unit to the zero object $0: I \to \mathsf{0}$. The symmetry isomorphism \[\sigma^\boxtimes_{\left(\phi: A_0 \to A_1 \right), \left( \psi: B_0 \to B_1 \right)}: \left(\phi: A_0 \to A_1 \right) \boxtimes \left( \psi: B_0 \to B_1 \right) \to \left( \psi: B_0 \to B_1 \right)  \boxtimes \left(\phi: A_0 \to A_1 \right)\] is defined as the pair of following morphisms: 
\begin{equation}\begin{gathered}\label{symmetry-box} 
\xymatrixcolsep{3pc}\xymatrix{ A_0 \otimes B_0 \ar[r]^-{\sigma_{A_0, B_0}} & B_0 \otimes A_0  
  } \\
  \xymatrixcolsep{6pc}\xymatrix{ (A_0 \otimes B_1) \oplus (A_1 \otimes B_0) \ar[rr]^-{\begin{bmatrix} 0 & \sigma_{A_0,B_1} \\ \sigma_{A_1, B_0}  & 0 \end{bmatrix}} && (B_0 \otimes A_1) \oplus (B_1 \otimes A_0).
  } 
\end{gathered}\end{equation}
\end{definition}

The monoidal product $\boxtimes$ makes $\mathsf{Arr}[\mathbb{X}]$ into a symmetric monoidal category. As mentioned above, this monoidal product $\boxtimes$ is in fact an example of (the dual of) the Leibniz construction and a (dual of the) pushout product -- see \cite{riehl2013theory,riehl2014categorical} for more details. Moreover, $\mathsf{Arr}[\mathbb{X}]$ is also additive where the additive structure is given pointwise, that is, the sum of maps is $(f_0, f_1) + (g_0, g_1) = (f_0 + g_0, f_1 + g_1)$ and the zero maps are $(0,0)$. Furthermore, $\mathsf{Arr}[\mathbb{X}]$ also inherits biproducts from $\mathbb{X}$ which are also given in pointwise fashion. 

\begin{lemma}\label{lem:arrow-monoidal} $\mathsf{Arr}[\mathbb{X}]$ is an additive symmetric monoidal category with monoidal product $\boxtimes$ and monoidal unit ${0: I \to \mathsf{0}}$. Moreover, $\mathsf{Arr}[\mathbb{X}]$ also has finite biproducts. 
\end{lemma}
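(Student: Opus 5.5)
The plan is to verify the structure componentwise. The key observation is that the $0$-component of $\boxtimes$ is just $\otimes$ in $\mathbb{X}$. The $1$-component is a ``Leibniz-type'' sum: it is obtained by distributing $\otimes$ over $\oplus$, using the canonical isomorphisms $(A_1 \oplus \dots \oplus A_n)\otimes(B_1\oplus\dots\oplus B_m)\cong \bigoplus A_i\otimes B_j$.

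First I check that $\boxtimes$ is well defined on maps. Given $(f_0,f_1)$ and $(g_0,g_1)$, the square (\ref{diag:arrow-morph}) for the pair in (\ref{boxtimes-maps}) reduces, by matrix multiplication, to two equalities. The first entry reads $(f_0\otimes g_1)\circ(1\otimes\psi) = (1\otimes \psi')\circ(f_0\otimes g_0)$, which holds since $g_1\circ\psi=\psi'\circ g_0$. The second entry is symmetric and uses $f_1\circ\phi=\phi'\circ f_0$. Functoriality of $\boxtimes$ is then immediate from functoriality of $\otimes$, since diagonal matrices compose entrywise.

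Next come the coherence isomorphisms. For three objects $\phi,\psi,\chi$, both $(\phi\boxtimes\psi)\boxtimes\chi$ and $\phi\boxtimes(\psi\boxtimes\chi)$ have $0$-component $A_0\otimes B_0\otimes C_0$. Their $1$-components are, up to the distributivity isomorphism and reordering of summands, both
\[(A_0\otimes B_0\otimes C_1)\oplus(A_0\otimes B_1\otimes C_0)\oplus(A_1\otimes B_0\otimes C_0),\]
and the structure map in each case is the column $[1\otimes1\otimes\chi;\ 1\otimes\psi\otimes1;\ \phi\otimes1\otimes1]$. Hence the associator is the pair consisting of the identity and the canonical isomorphism permuting summands. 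For the unit, $(0:I\to\mathsf{0})\boxtimes\phi$ has $0$-component $A_0$ and $1$-component $(I\otimes A_1)\oplus(\mathsf{0}\otimes A_0)\cong A_1\oplus\mathsf{0}=A_1$, with structure map $[\phi;0]=\phi$. The unitors are therefore identities, or canonical isomorphisms if one does not strictify.

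The symmetry in (\ref{symmetry-box}) is a map in the arrow category by naturality of $\sigma$. It is involutive because the antidiagonal matrix squares to the diagonal of $\sigma\circ\sigma=1$. Naturality of all of these is entrywise naturality in $\mathbb{X}$. The pentagon, triangle and hexagon axioms reduce, in each component, to coherence for $\otimes$ together with coherence of the distributivity and biproduct isomorphisms. The cleanest way to handle this is to note that everything is built from canonical isomorphisms of the distributive monoidal category, so coherence follows. This bookkeeping is the main obstacle, and I would at least spell out the hexagon, where the antidiagonal entries must be tracked. Alternatively, I would cite that this is the dual pushout product over $\mathsf{0}$ \cite{riehl2014categorical}, which is known to be symmetric monoidal whenever $\otimes$ preserves the relevant colimits, here biproducts.

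Finally I turn to additivity and biproducts. Each hom-set of $\mathsf{Arr}[\mathbb{X}]$ is a submonoid of $\mathbb{X}(A_0,A_0')\times\mathbb{X}(A_1,A_1')$: sums and zeros of squares (\ref{diag:arrow-morph}) still commute because composition is bilinear. Composition is bilinear pointwise. The product $\boxtimes$ preserves sums and zeros in each variable, because its components are $\otimes$ and diagonal matrices of $\otimes$, both additive. Biproducts are pointwise: $\phi\oplus\psi:A_0\oplus B_0\to A_1\oplus B_1$ is the diagonal matrix, with injections and projections given by pairs of those of $\mathbb{X}$. These are arrow maps by direct check, and the biproduct equations hold componentwise. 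The zero object is $1_{\mathsf{0}}$.
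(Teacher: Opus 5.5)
Your proposal is correct and takes the same route as the paper, which simply asserts that the coherences follow from distributivity of $\otimes$ over $\oplus$ (or alternatively from the Leibniz/pushout-product construction). You carry out that componentwise verification in more detail, and your checks of well-definedness on maps, the associator, unitors, symmetry, additivity and pointwise biproducts are all sound. The direct check is what actually carries the argument. The pushout-product citation, here as in the paper, is better treated as motivation: the literal dual pushout product would have codomain the pullback $(A_0\otimes B_1)\times_{A_1\otimes B_1}(A_1\otimes B_0)$ rather than the plain biproduct $(A_0\otimes B_1)\oplus(A_1\otimes B_0)$.
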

\begin{proof} It is straightforward (though tedious) to show that $\boxtimes$ indeed gives an additive symmetric monoidal structure, as the necessary coherences follow from distributivity of $\otimes$ and $\oplus$. Alternatively, this follows from the fact that $\boxtimes$ is a (dual of the) Leibniz construction/pushout product \cite{riehl2013theory,riehl2014categorical}. 
\end{proof}

As anticipated, in order to show that $\overline{\mathsf{S}}$ is an algebra modality, it will be useful to work out what the commutative monoids are in the arrow category with respect to $\boxtimes$. It turns out that in this case the commutative monoids correspond precisely to derivations in the usual sense. 

A commutative monoid in $\mathsf{Arr}[\mathbb{X}]$ is a triple
\[\left( \left( \phi: A_0 \to A_1 \right), \left(\mathsf{m}_0, \begin{bmatrix} \mathsf{m}_1 & \mathsf{m}_2 \end{bmatrix}\right), (\mathsf{u}_0, \mathsf{u}_1) \right)\]
consisting of a map $\phi: A_0 \to A_1$ in $\mathbb{X}$ and maps $ \left(\mathsf{m}_0, \begin{bmatrix} \mathsf{m}_1 & \mathsf{m}_2 \end{bmatrix}\right): \left( \phi: A_0 \to A_1 \right) \boxtimes \left( \phi: A_0 \to A_1 \right) \to \left( \phi: A_0 \to A_1 \right)$ and $(\mathsf{u}_0, \mathsf{u}_1): \left( 0: I \to \mathsf{0} \right) \to \left( \phi: A_0 \to A_1 \right)$ in the arrow category, which also satisfy (\ref{diag:monoid}). Expanding out these arrow category maps tells us that we have five maps in $\mathbb{X}$ of the following type: 
\begin{gather*}
\mathsf{m}_0: A_0 \otimes A_0 \to A_0 \qquad \mathsf{m}_1: A_0 \otimes A_1 \to A_1 \qquad \mathsf{m}_2: A_1 \otimes A_0 \to A_1 \\
\mathsf{u}_0: I \to A_0 \qquad \mathsf{u}_1: \mathsf{0} \to A_1
\end{gather*}
where note that $\begin{bmatrix} \mathsf{m}_1 & \mathsf{m}_2 \end{bmatrix}: (A_0 \otimes A_1) \oplus (A_1 \otimes A_0) \to A_1$. Observe also that, since $\mathsf{0}$ is a zero object, we must have $\mathsf{u}_1 = 0$. 
\smallskip

We will now explain how $A_0$ is a commutative monoid and how $A_1$ is a module over it. To do so, let us expand out what (\ref{diag:monoid}) is in the arrow category, which would give us the following six diagrams: 
\begin{equation}\begin{gathered}\label{diag:monoid-assoc-arrow-1}\xymatrixrowsep{1.75pc}\xymatrixcolsep{3pc}\xymatrix{A_0  \otimes  A_0  \otimes  A_0  \ar[r]^-{1_{A_0} \otimes \mathsf{m}_0} \ar[d]_-{\mathsf{m}_0 \otimes 1_{A_0}}  & A_0  \otimes  A_0  \ar[d]^-{\mathsf{m}_0} &   \\
      A_0 \otimes  A_0  \ar[r]_-{\mathsf{m}_0} &A_0} \end{gathered}\end{equation}
      
    \begin{equation}\begin{gathered}\label{diag:monoid-assoc-arrow-2}  \xymatrixrowsep{2pc}\xymatrixcolsep{5.75pc}\xymatrix{ {\begin{matrix} (A_0  \otimes  A_0  \otimes  A_1) \\\oplus   (A_0  \otimes  A_1  \otimes  A_0) \\ \oplus  (A_1  \otimes  A_0  \otimes  A_0)\end{matrix}} \ar[rr]^-{\begin{bmatrix} 1_{A_0}  \otimes \mathsf{m}_1 & 1_{A_0}  \otimes \mathsf{m}_2 & 0 \\
    0 & 0 & 1_{A_1}\otimes  \mathsf{m}_0  \end{bmatrix}} \ar[dd]|-{\begin{bmatrix} \mathsf{m}_0 \otimes 1_{A_1} & 0 & 0 \\ 0 & \mathsf{m}_1 \otimes 1_{A_0} & \mathsf{m}_2 \otimes 1_{A_0}  \end{bmatrix}} & & (A_0 \otimes A_1) \! \oplus \! (A_1 \otimes A_0)  \ar[dd]|-{\begin{bmatrix} \mathsf{m}_1 & \mathsf{m}_2 \end{bmatrix}}  \\ \\
       (A_0 \otimes A_1) \oplus (A_1 \otimes A_0)  \ar[rr]_-{\begin{bmatrix} \mathsf{m}_1 & \mathsf{m}_2 \end{bmatrix}} && A_1} \end{gathered}\end{equation}
       
     \begin{equation}\begin{gathered}\label{diag:monoid-unit-arrow-1}  \xymatrixrowsep{1.75pc}\xymatrixcolsep{5pc}\xymatrix{A_0 \ar@{=}[dr] \ar[r]^-{1_{A_0} \otimes \mathsf{u}_0}\ar[d]_-{\mathsf{u}_0 \otimes 1_{A_0}} & A_0  \otimes  A_0 \ar[d]^-{\mathsf{m}_0}   \\
     A_0 \otimes  A_0 \ar[r]_-{\mathsf{m}_0}   & A_0  } \end{gathered}\end{equation}
     
    \begin{equation}\begin{gathered}\label{diag:monoid-unit-arrow-2} \xymatrixrowsep{1.75pc}\xymatrixcolsep{5pc}\xymatrix{ A_1 \ar@{=}[dr] \ar[r]^-{\begin{bmatrix} 0 & 1_{A_1} \otimes \mathsf{u}_0 \end{bmatrix}} \ar[d]_-{\begin{bmatrix} \mathsf{u}_0 \otimes 1_{A_1} & 0 \end{bmatrix}} & (A_0 \otimes A_1) \oplus (A_1 \otimes A_0) \ar[d]^-{\begin{bmatrix} \mathsf{m}_1 & \mathsf{m}_2 \end{bmatrix}}  \\
     (A_0 \otimes A_1) \oplus (A_1 \otimes A_0) \ar[r]_-{\begin{bmatrix} \mathsf{m}_1 & \mathsf{m}_2 \end{bmatrix}} & A_1  } \end{gathered}\end{equation}
     
    \begin{equation}\begin{gathered}\label{diag:monoid-com-arrow-1} \xymatrixrowsep{1.75pc}\xymatrixcolsep{3pc}\xymatrix{ A_0 \otimes A_0  \ar[r]^-{\sigma_{A_0,A_0}}  \ar[dr]_-{\mathsf{m}_0} & A_0  \otimes  A_0 \ar[d]^-{\mathsf{m}_0} \\
     & A_0  } \end{gathered}\end{equation}
     
    \begin{equation}\begin{gathered}\label{diag:monoid-com-arrow-2} \xymatrixrowsep{1.75pc}\xymatrixcolsep{3pc}\xymatrix{ (A_0 \otimes A_1) \oplus (A_1 \otimes A_0) \ar[drr]_-{\begin{bmatrix} \mathsf{m}_1 & \mathsf{m}_2 \end{bmatrix}~~~} \ar[rr]^-{\begin{bmatrix} 0 & \sigma_{A_0,A_1} \\ \sigma_{A_1, A_0}  & 0 \end{bmatrix}} && (A_0 \otimes A_1) \oplus (A_1 \otimes A_0) \ar[d]^-{\begin{bmatrix} \mathsf{m}_1 & \mathsf{m}_2 \end{bmatrix}} \\
     && A_1. }
       \end{gathered}\end{equation}
 Now (\ref{diag:monoid-assoc-arrow-1}), (\ref{diag:monoid-unit-arrow-1}), and (\ref{diag:monoid-com-arrow-1}) together tell us that $(A_0, \mathsf{m}_0, \mathsf{u}_0)$ is a commutative monoid. Moreover, by rewriting the bottom triangle of (\ref{diag:monoid-unit-arrow-2}) and precomposing (\ref{diag:monoid-assoc-arrow-2}) with the first injection gives us that the two following diagrams also commute: 
\begin{equation}\begin{gathered}\label{}\xymatrixrowsep{1.75pc}\xymatrixcolsep{5pc}\xymatrix{A_0  \otimes  A_0  \otimes  A_1  \ar[r]^-{1_{A_0} \otimes \mathsf{m}_1} \ar[d]_-{\mathsf{m}_0 \otimes 1_{A_1}}  & A_0  \otimes  A_1 \ar[d]^-{\mathsf{m}_1} & A_1 \ar@{=}[dr] \ar[r]^-{\mathsf{u}_0 \otimes 1_{A_1}} & A_0  \otimes  A_1 \ar[d]^-{\mathsf{m}_1}  \\
      A_0 \otimes  A_1 \ar[r]_-{\mathsf{m}_1} & A_1  &  & A_1} \end{gathered}\end{equation} 
 which tells us that $(A_1, \mathsf{m}_1)$ is an $(A_0, \mathsf{m}_0, \mathsf{u}_0)$-module. One may then ask what can be said about the remaining data involving $\mathsf{m}_2$. As it turns out, this information is in fact redundant. Indeed, note that precomposing (\ref{diag:monoid-com-arrow-2}) with the second injection gives us that the following diagram commutes: 
  \begin{equation}\begin{gathered}\label{} \xymatrixrowsep{1.75pc}\xymatrixcolsep{3pc}\xymatrix{ A_1 \otimes A_0  \ar[r]^-{\sigma_{A_1,A_0}}  \ar[dr]_-{\mathsf{m}_2} & A_0  \otimes  A_1 \ar[d]^-{\mathsf{m}_1} \\
     & A_0.  } \end{gathered}\end{equation}
     
Therefore $\mathsf{m}_2$ is completely determined by $\mathsf{m}_1$, in other words, $\mathsf{m}_2$ is the induced right action from the left action $\mathsf{m}_1$. Thus the remaining data from (\ref{diag:monoid-assoc-arrow-1}) and (\ref{diag:monoid-unit-arrow-1}) corresponds to the right action analogues of (\ref{diag:module}) and how the left and right action interact in an obvious way. 

Finally, expanding out that $ \left(\mathsf{m}_0, \begin{bmatrix} \mathsf{m}_1 & \mathsf{m}_1 \circ \sigma_{A_1,A_0} \end{bmatrix}\right): \left( \phi: A_0 \to A_1 \right) \boxtimes \left( \phi: A_0 \to A_1 \right) \to \left( \phi: A_0 \to A_1 \right)$ and $(\mathsf{u}_0, 0): \left( 0: I \to \mathsf{0} \right) \to \left( \phi: A_0 \to A_1 \right)$ are maps in the arrow category gives us the following diagrams: 
\begin{equation}\begin{gathered}\label{}\xymatrixrowsep{1.75pc}\xymatrixcolsep{5pc}\xymatrix{A_0  \otimes  A_0   \ar[r]^-{\begin{bmatrix} \phi \otimes 1_{A_0} \\ 1_{A_0} \otimes \phi \end{bmatrix}} \ar[d]_-{\mathsf{m}_0}  & (A_0  \otimes  A_1) \oplus (A_1 \otimes A_0) \ar[d]^-{\begin{bmatrix} \mathsf{m}_1 & \mathsf{m}_1 \circ \sigma_{A_1,A_0}\end{bmatrix}} & I  \ar[d]_-{\mathsf{u}_0} \ar[r]^-{0} & \mathsf{0} \ar[d]^-{0}  \\
      A_0 \ar[r]_-{\phi} & A_1  &  A_0 \ar[r]_-{\phi}  & A_1} \end{gathered}\end{equation} 
    which we can rewrite as follows: 
    \begin{equation}\begin{gathered}\label{}\xymatrixrowsep{1.75pc}\xymatrixcolsep{7pc}\xymatrix{A_0  \otimes  A_0   \ar[r]^-{ \sigma_{A_1,A_0} \circ (\phi \otimes 1_{A_0}) + 1_{A_0} \otimes \phi} \ar[d]_-{\mathsf{m}_0}  & A_0 \otimes A_1 \ar[d]^-{\mathsf{m}_1} & I \ar[d]_-{\mathsf{u}_0}  \ar[dr]^-{0} &   \\
      A_0 \ar[r]_-{\phi} & A_1  &  A_0 \ar[r]_-{\phi}  & A_1.} \end{gathered}\end{equation} 
      These last diagrams tell us precisely that $\phi: (A_0, \mathsf{m}_0, \mathsf{u}_0) \to (A_1, \mathsf{m}_1)$ is a derivation in $\mathbb{X}$. 

      \smallskip 
      
      Conversely, if $\mathsf{D}: (A, \mathsf{m}, \mathsf{u}) \to (M, \alpha)$ is a derivation in $\mathbb{X}$, then by essentially the reverse argument as above, we get that $\left( \left( \mathsf{D}: A \to M \right), \left(\mathsf{m}, \begin{bmatrix} \alpha & \alpha \circ \sigma_{M, A} \end{bmatrix}\right), (\mathsf{u}, 0) \right)$ is a commutative monoid in $\mathsf{Arr}[\mathbb{X}]$. Moreover, clearly these constructions are inverses of each other. Therefore, we conclude that commutative monoids in the arrow category are preicsely of the form $\left( \left( \mathsf{D}: A \to M \right), \left(\mathsf{m}, \begin{bmatrix} \alpha & \alpha \circ \sigma_{M, A} \end{bmatrix}\right), (\mathsf{u}, 0) \right)$ for some derivation $\mathsf{D}: (A, \mathsf{m}, \mathsf{u}) \to (M, \alpha)$ in the base category. Furthermore, it is easy to see that this extends to maps and that monoid morphisms in the arrow category are precisely derivation morphisms in the base category. Therefore, we can conclude the following: 

\begin{proposition}\label{prop:mon=der-arrow} We have an isomorphism of category $\mathsf{CMON}\left[\mathsf{Arr}[\mathbb{X}] \right] \cong \mathsf{DER}[\mathbb{X}]$. 
\end{proposition}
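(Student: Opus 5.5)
I will build explicit functors in both directions, $\mathcal{G}: \mathsf{CMON}[\mathsf{Arr}[\mathbb{X}]] \to \mathsf{DER}[\mathbb{X}]$ and $\mathcal{G}^{-1}: \mathsf{DER}[\mathbb{X}] \to \mathsf{CMON}[\mathsf{Arr}[\mathbb{X}]]$, following the pattern of Thm \ref{thm:deri=alg}.

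\textbf{Objects, monoid to derivation.} Given a commutative monoid $\left( \left( \phi: A_0 \to A_1 \right), \left(\mathsf{m}_0, \begin{bmatrix} \mathsf{m}_1 & \mathsf{m}_2 \end{bmatrix}\right), (\mathsf{u}_0, \mathsf{u}_1) \right)$, set $\mathcal{G}$ of it to be $\phi: (A_0,\mathsf{m}_0,\mathsf{u}_0) \to (A_1,\mathsf{m}_1)$. The discussion preceding the statement already supplies the facts that make this well defined:
\begin{itemize}
\item $\mathsf{u}_1 = 0$;
\item $\mathsf{m}_2 = \mathsf{m}_1 \circ \sigma_{A_1,A_0}$;
\item $(A_0,\mathsf{m}_0,\mathsf{u}_0)$ is a commutative monoid;
\item $(A_1,\mathsf{m}_1)$ is a module;
\item the arrow-category conditions on the multiplication and unit are exactly the Leibniz and constant rules (\ref{diag:derivation-normal}).
\end{itemize}
I would only make explicit that the first row of the matrix identity is $\phi \circ \mathsf{m}_0 = \mathsf{m}_1 \circ (1 \otimes \phi) + \mathsf{m}_2 \circ (\phi \otimes 1)$. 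Substituting $\mathsf{m}_2 = \mathsf{m}_1\circ\sigma$ gives the Leibniz rule verbatim.

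\textbf{Objects, derivation to monoid.} Given a derivation $\mathsf{D}: (A,\mathsf{m},\mathsf{u}) \to (M,\alpha)$, set $\mathcal{G}^{-1}$ of it to be the triple $\left( \mathsf{D}, \left(\mathsf{m}, \begin{bmatrix} \alpha & \alpha \circ \sigma_{M,A} \end{bmatrix}\right), (\mathsf{u},0)\right)$. I must check the six diagrams (\ref{diag:monoid-assoc-arrow-1})--(\ref{diag:monoid-com-arrow-2}) and that the structure maps are arrow-category maps.
\begin{itemize}
\item The arrow-category conditions are the Leibniz and constant rules read backwards.
\item (\ref{diag:monoid-assoc-arrow-1}), (\ref{diag:monoid-unit-arrow-1}) and (\ref{diag:monoid-com-arrow-1}) are the monoid axioms for $A$.
\item (\ref{diag:monoid-com-arrow-2}) holds on each injection because $\sigma$ is an involution.
\item (\ref{diag:monoid-unit-arrow-2}) reduces, on each side, to the module unit law. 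For the right-hand side one uses naturality of $\sigma$ to rewrite $\alpha\circ\sigma\circ(1\otimes\mathsf{u})$ as $\alpha\circ(\mathsf{u}\otimes 1)$.
\end{itemize}

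The main obstacle is (\ref{diag:monoid-assoc-arrow-2}), which must be checked on each of the three summands $A\otimes A\otimes M$, $A\otimes M\otimes A$ and $M\otimes A\otimes A$.
\begin{itemize}
\item On the first summand it is module associativity.
\item On the second it reads $\alpha\circ(1\otimes \alpha\sigma) = \alpha\sigma\circ(\alpha\otimes 1)$. Using naturality of $\sigma$ and strictness, both sides become $\alpha\circ(1\otimes\alpha)$ precomposed with a permutation. Module associativity together with commutativity of $\mathsf{m}$ then closes the case.
\item On the third it reads $\alpha\sigma\circ(1\otimes\mathsf{m}) = \alpha\sigma\circ(\alpha\sigma\otimes 1)$. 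The same argument applies, with commutativity of $\mathsf{m}$ used to swap the two $A$ factors.
\end{itemize}
I expect this symmetry bookkeeping to be the only real work.

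\textbf{Inverse on objects.} The two constructions are mutually inverse. One direction is immediate. The other uses $\mathsf{m}_2=\mathsf{m}_1\sigma$ and $\mathsf{u}_1=0$, so the monoid data are recovered exactly.

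\textbf{Morphisms.} A monoid morphism $(f_0,f_1)$ in $\mathsf{Arr}[\mathbb{X}]$ consists of three things:
\begin{itemize}
\item the arrow square $f_1\phi=\phi' f_0$, which is the right diagram of (\ref{diag:derivation-morph});
\item the first component of the multiplication square, which says $f_0$ is a monoid morphism;
\item the second component, which is a row matrix whose first entry gives the left diagram of (\ref{diag:derivation-morph}).
\end{itemize}
The second entry of that row follows from the first by naturality of $\sigma$, and the unit square is just $f_0\mathsf{u}_0=\mathsf{u}_0'$ since the $\mathsf{0}$ component is trivial. Conversely, a derivation morphism yields all of these conditions. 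So both functors act as the identity on maps, functoriality is automatic, and we obtain $\mathsf{CMON}[\mathsf{Arr}[\mathbb{X}]] \cong \mathsf{DER}[\mathbb{X}]$.
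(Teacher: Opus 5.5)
Your proposal is correct and follows essentially the same route as the paper: expand the $\boxtimes$-monoid axioms componentwise, read off that $\mathsf{u}_1=0$ and $\mathsf{m}_2=\mathsf{m}_1\circ\sigma_{A_1,A_0}$, identify the arrow-category conditions with the Leibniz and constant rules, and match monoid morphisms with derivation morphisms. You also carry out checks that the paper only gestures at (``essentially the reverse argument'' and ``easy to see''): the three-summand verification of (\ref{diag:monoid-assoc-arrow-2}) using module associativity and commutativity of $\mathsf{m}$, and the observation that the second entry of the morphism condition follows from the first by naturality of $\sigma$.
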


\section{Differential Modality on Arrow Category}\label{sec:diff-mod}

Thanks to Sec \ref{sec:der-monoid},  we know what commutative monoids are in the arrow category and we can use this to lift our differential modality to the arrow category. So for the remainder of this section, let $\mathbb{X}$ be a differential category with differential modality $(\mathsf{S}, \mu, \eta, \mathsf{m}, \mathsf{u}, \mathsf{d})$ and assume that $\mathbb{X}$ also have finite biproducts. We already have our monad $(\overline{\mathsf{S}}, \overline{\mu}, \overline{\eta})$ on our arrow category, so we now give its algebra modality structure. 

Starting with the multiplication $\overline{\mathsf{m}}$, define \[\overline{\mathsf{m}}_{\left( \phi: A_0 \to A_1 \right)}: \overline{\mathsf{S}}\left( \phi: A_0 \to A_1 \right) \boxtimes \overline{\mathsf{S}}\left( \phi: A_0 \to A_1 \right) \to \overline{\mathsf{S}}\left( \phi: A_0 \to A_1 \right)\] as the pair of following maps: 
\begin{equation}\begin{gathered}\label{diag:overline-m} 
\xymatrixcolsep{3pc}\xymatrix{ \mathsf{S}(A_0) \otimes \mathsf{S}(A_0) \ar[r]^-{\mathsf{m}_{A_0}} &\mathsf{S}(A_0) 
  } \\
  \xymatrixcolsep{9pc}\xymatrix{ {\begin{matrix} \left( \mathsf{S}(A_0) \otimes \mathsf{S}(A_0) \otimes A_1 \right) \\ \oplus \left( \mathsf{S}(A_0) \otimes A_1 \otimes \mathsf{S}(A_0) \right) \end{matrix}} \ar[rr]^-{\begin{bmatrix} \mathsf{m}_{A_0} \otimes 1_{A_1} & (\mathsf{m}_{A_0} \otimes 1_{A_1}) \circ (1_{\mathsf{S}(A_0)} \otimes \sigma_{A_1, \mathsf{S}(A_0)}) \end{bmatrix}} && \mathsf{S}(A_0) \otimes A_1.
  } 
\end{gathered}\end{equation}

For the unit $\overline{\mathsf{u}}$, define \[\overline{\mathsf{u}}_{\left( \phi: A_0 \to A_1 \right)}: \left( 0: I \to \mathsf{0} \right) \to \overline{\mathsf{S}}\left( \phi: A_0 \to A_1 \right)\] as follows: 
\begin{align}
\overline{\mathsf{u}}_{\left( \phi: A_0 \to A_1 \right)} := \left( \mathsf{u}_{A_0}: I \to \mathsf{S}(A_0), 0: \mathsf{0} \to A_1 \right).  \end{align}

\begin{proposition} $(\overline{\mathsf{S}}, \overline{\mu}, \overline{\eta}, \overline{\mathsf{m}}, \overline{\mathsf{u}})$ is an algebra modality. 
\end{proposition}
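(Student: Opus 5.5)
The plan is to reduce everything to Prop \ref{prop:mon=der-arrow}, so that most of the monoid verification becomes a statement about derivations in $\mathbb{X}$. We will check, in order: (a) $\overline{\mathsf{m}}$ and $\overline{\mathsf{u}}$ are well-defined maps in $\mathsf{Arr}[\mathbb{X}]$; (b) they are natural; (c) each $\overline{\mathsf{S}}(\phi)$ is a commutative monoid; (d) $\overline{\mu}$ is a monoid morphism.

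For (a) and (c) together we invoke Prop \ref{prop:mon=der-arrow}. Observe that $\overline{\mathsf{m}}$ is exactly of the form $\left(\mathsf{m}, \begin{bmatrix} \alpha & \alpha \circ \sigma \end{bmatrix}\right)$ and $\overline{\mathsf{u}} = (\mathsf{u},0)$, where $(\mathsf{S}(A_0), \mathsf{m}_{A_0}, \mathsf{u}_{A_0})$ is the free monoid and $\alpha = \mathsf{m}_{A_0} \otimes 1_{A_1}$ is the obvious action on $\mathsf{S}(A_0) \otimes A_1$. So it suffices to show that
\[(1_{\mathsf{S}(A_0)} \otimes \phi) \circ \mathsf{d}_{A_0} : (\mathsf{S}(A_0), \mathsf{m}_{A_0}, \mathsf{u}_{A_0}) \to (\mathsf{S}(A_0) \otimes A_1, \mathsf{m}_{A_0} \otimes 1_{A_1})\]
is a derivation in the classical sense. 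That $\mathsf{S}(A_0) \otimes A_1$ is a module follows directly from the monoid axioms of $\mathsf{S}(A_0)$. The constant rule follows from \textbf{[D.1]}. The Leibniz rule follows from \textbf{[D.2]} after postcomposing with $1 \otimes \phi$ and using naturality of $\sigma$ to move $\phi$ past the symmetry. Alternatively, the derivation property follows from \cite[Prop 5.2]{lemay2019differential}, since $\mathsf{d}$ is an $\mathsf{S}$-derivation by \textbf{[D.4]} and postcomposing with the module map $1\otimes\phi$ preserves this. The correspondence of Prop \ref{prop:mon=der-arrow} then gives both that the pair is an arrow-category map (this is the Leibniz square) and that all six monoid diagrams hold. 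We should double-check the matrix ordering of the two summands against the $\boxtimes$ convention, but the symmetry term is designed precisely to match.

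For (b), naturality is componentwise. The first component is naturality of $\mathsf{m}$. The second component reduces to naturality of $\mathsf{m}$ and $\sigma$ together with bifunctoriality of $\otimes$, acting on each block of the diagonal matrix defining $\overline{\mathsf{S}}(f_0,f_1)\boxtimes\overline{\mathsf{S}}(f_0,f_1)$. Unit naturality is naturality of $\mathsf{u}$, plus the fact that any map out of $\mathsf{0}$ is $0$.

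For (d), we must show that $\overline{\mu}$ preserves $\overline{\mathsf{m}}$ and $\overline{\mathsf{u}}$. By Prop \ref{prop:mon=der-arrow}, monoid morphisms in $\mathsf{Arr}[\mathbb{X}]$ are derivation morphisms, which amounts to two requirements. First, $\mu_{A_0}$ must be a monoid morphism $\mathsf{S}\mathsf{S}(A_0) \to \mathsf{S}(A_0)$, which is the algebra modality axiom. Second, the second component $(\mathsf{m}\otimes 1)\circ(\mu \otimes 1 \otimes 1)$ must be compatible with the actions. The action on $\overline{\mathsf{S}}\overline{\mathsf{S}}(\phi)$ is $\mathsf{m}_{\mathsf{S}(A_0)} \otimes 1 \otimes 1$ on $\mathsf{S}\mathsf{S}(A_0)\otimes(\mathsf{S}(A_0)\otimes A_1)$. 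After applying $\mu$ to both factors, the required square becomes: $\mu$ preserves $\mathsf{m}$, followed by associativity of $\mathsf{m}_{A_0}$. The arrow-map condition for the commutative-monoid structure on $\overline{\mathsf{S}}\overline{\mathsf{S}}(\phi)$ is already covered by (c) applied to the object $\overline{\mathsf{S}}(\phi)$.

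I expect the main obstacle to be bookkeeping rather than ideas. The delicate points are matching the block ordering and symmetries in $\overline{\mathsf{S}}(\phi)\boxtimes\overline{\mathsf{S}}(\phi)$ with the Leibniz term from \textbf{[D.2]}, and correctly identifying the module structure on $\overline{\mathsf{S}}\overline{\mathsf{S}}(\phi)$ in step (d). I would first write both explicitly as $2$-block matrices before comparing.
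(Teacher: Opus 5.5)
Your proposal is correct and follows essentially the paper's route. Both arguments reduce everything to Prop \ref{prop:mon=der-arrow} by showing that $(1_{\mathsf{S}(A_0)}\otimes\phi)\circ\mathsf{d}_{A_0}$ is a derivation and that $\overline{\mu}$ is a derivation morphism; the paper gets both facts at once by applying $\mathcal{F}$ to the free $\overline{\mathsf{S}}$-algebra $(\overline{\mathsf{S}}(\phi),\overline{\mu})$ and to the $\overline{\mathsf{S}}$-algebra morphism $\overline{\mu}$, whereas you check them by hand from \textbf{[D.1]}, \textbf{[D.2]} and the algebra modality axioms.

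One bookkeeping point should be made explicit. The second block of $\overline{\mathsf{m}}$ is $(\mathsf{m}_{A_0}\otimes 1_{A_1})\circ(1_{\mathsf{S}(A_0)}\otimes\sigma_{A_1,\mathsf{S}(A_0)})$, while Prop \ref{prop:mon=der-arrow} produces $(\mathsf{m}_{A_0}\otimes 1_{A_1})\circ\sigma_{\mathsf{S}(A_0)\otimes A_1,\mathsf{S}(A_0)}$. These agree only by commutativity of $\mathsf{m}_{A_0}$, not by naturality of $\sigma$ alone, and the same identity is needed when you derive the classical Leibniz rule from \textbf{[D.2]}.
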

\begin{proof} We first explain why $\overline{\mathsf{S}}\left( \phi: A_0 \to A_1 \right)$ is a commutative monoid in the arrow category. To do so, consider that $\left( \overline{\mathsf{S}}\left( \phi: A_0 \to A_1 \right), \overline{\mu} \right)$ is an $\overline{\mathsf{S}}$-algebra. Applying Prop \ref{prop:overlineSalg-to-der} to this $\overline{\mathsf{S}}$-algebra, gives us that $(1 \otimes \phi) \circ \mathsf{d}: (\mathsf{S}(A_0), \mathsf{m}, \mathsf{u}) \to (\mathsf{S}(A_0) \otimes A_1, \mathsf{m} \otimes 1)$ is a $\mathsf{S}$-derivation, and so a derivation. Therefore by Prop \ref{prop:mon=der-arrow}, we get that:
\[\left( \left( (1 \otimes \phi) \circ \mathsf{d}: \mathsf{S}(A_0) \to \mathsf{S}(A_0) \otimes A_1 \right), \left(\mathsf{m}, \begin{bmatrix} \mathsf{m} \otimes 1 & (\mathsf{m} \otimes 1) \circ \sigma \end{bmatrix} \right) , (\mathsf{u}, 0) \right)\]
is a commutative monoid in the arrow category. Moreover, by symmetry coherences and commutativity of the multiplication, one can easily check that $(\mathsf{m} \otimes 1) \circ \sigma = (\mathsf{m} \otimes 1) \circ (1 \otimes \sigma)$. Therefore we may rewrite our above commutative monoid as the triple $\left( \overline{\mathsf{S}}\left( \phi: A_0 \to A_1 \right), \overline{\mathsf{m}}, \overline{\mathsf{u}} \right)$, as desired. Furthermore, this also gives us that $\overline{\mathsf{m}}$ and $\overline{\mathsf{u}}$ are indeed maps in the arrow category, so they are well-defined. Naturality of $\mathsf{m}$, $\sigma$, $\mathsf{u}$, and $0$ gives us that $\overline{\mathsf{m}}$ and $\overline{\mathsf{u}}$ are also natural transformations as desired. Lastly, it remains to explain why $\overline{\mu}$ is a monoid morphism. However since we know that $\overline{\mu}$ is an $\overline{\mathsf{S}}$-algebra morphism, by Prop \ref{prop:overlineSalg-to-der}, this means that $\overline{\mu}$ is also an $\mathsf{S}$-derivation morphism, which means that it is also a derivation morphism. Then by Prop \ref{prop:mon=der-arrow}, we get that $\overline{\mu}$ is a monoid morphism in the arrow category. So we conclude that $(\overline{\mathsf{S}}, \overline{\mu}, \overline{\eta}, \overline{\mathsf{m}}, \overline{\mathsf{u}})$ is an algebra modality, as desired. 
\end{proof}

Finally, it remains to give our deriving transformation $\overline{\mathsf{d}}$. So define \[\overline{\mathsf{d}}_{\left( \phi: A_0 \to A_1 \right)}: \overline{\mathsf{S}}\left( \phi: A_0 \to A_1 \right) \to \overline{\mathsf{S}}\left( \phi: A_0 \to A_1 \right) \boxtimes \left( \phi: A_0 \to A_1 \right)\] as the pair of following maps: 
\begin{equation}\begin{gathered}\label{diag:overline-d} 
\xymatrixcolsep{3pc}\xymatrix{ \mathsf{S}(A_0) \ar[r]^-{\mathsf{d}_{A_0}} &\mathsf{S}(A_0) \otimes A_0
  } \\
  \xymatrixcolsep{7pc}\xymatrix{ \mathsf{S}(A_0) \otimes A_1 \ar[rr]^-{\begin{bmatrix} 1_{\mathsf{S}(A_0)} \otimes 1_{A_1} \\ (1_{\mathsf{S}(A_0)} \otimes \sigma_{A_0, A_1}) \circ (\mathsf{d}_{\mathsf{S}(A_0)} \otimes 1_{A_1}) \end{bmatrix}} && (\mathsf{S}(A_0) \otimes A_1) \oplus (\mathsf{S}(A_0) \otimes A_1 \otimes A_0).
  }
\end{gathered}\end{equation}
\begin{theorem}\label{thm:arrow-diff-cat} $(\overline{\mathsf{S}}, \overline{\mu}, \overline{\eta}, \overline{\mathsf{m}}, \overline{\mathsf{u}}, \overline{\mathsf{d}})$ is a differential modality and therefore $\mathsf{Arr}[\mathbb{X}]$ is a differential category. 
\end{theorem}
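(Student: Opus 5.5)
The plan is to verify the axioms of Definition \ref{def:diffcat} for $\overline{\mathsf{d}}$ componentwise, using the matrix calculus for biproducts. The key observation is that every arrow-category map has a $0$-th component and a $1$-st component. For $\overline{\mathsf{S}}$, $\overline{\mu}$, $\overline{\eta}$, $\overline{\mathsf{m}}$, $\overline{\mathsf{u}}$, $\overline{\mathsf{d}}$, the $0$-th components are exactly $\mathsf{S}$, $\mu$, $\eta$, $\mathsf{m}$, $\mathsf{u}$, $\mathsf{d}$ of $\mathbb{X}$. Hence every axiom \textbf{[D.1]}--\textbf{[D.5]} holds in the $0$-th component for free, and all the work lies in the $1$-st components.

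I would first fix the typing of $\overline{\mathsf{d}}$. The object $\overline{\mathsf{S}}(\phi) \boxtimes \phi$ is the map $\mathsf{S}(A_0)\otimes A_0 \to (\mathsf{S}(A_0)\otimes A_1)\oplus(\mathsf{S}(A_0)\otimes A_1\otimes A_0)$. So the lower entry of the $1$-st component of $\overline{\mathsf{d}}$ should be read as $(1\otimes\sigma_{A_0,A_1})\circ(\mathsf{d}_{A_0}\otimes 1_{A_1})$. With this reading, the steps are as follows.

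\begin{enumerate}[(1)]
\item \emph{Well-definedness.} Check that $\overline{\mathsf{d}}$ satisfies (\ref{diag:arrow-morph}). The first matrix row is trivially compatible. The second row reduces, after naturality of $\sigma$ moves $\phi$ out of the way, to $(1\otimes\sigma)\circ(\mathsf{d}\otimes 1)\circ\mathsf{d} = (\mathsf{d}\otimes 1)\circ\mathsf{d}$, which is precisely the interchange rule \textbf{[D.5]}.
\item \emph{Naturality.} This follows from naturality of $\mathsf{d}$ and $\sigma$.
\item \emph{Constant rule \textbf{[D.1]}.} The $1$-st component of $\overline{\mathsf{u}}$ has domain the zero object, so the composite is automatically $0$.
\item \emph{Linear rule \textbf{[D.3]}.} The $1$-st component of $\overline{\eta}$ is $\mathsf{u}\otimes 1$. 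Composing with $\overline{\mathsf{d}}$ gives the column $[\mathsf{u}\otimes 1;\ (1\otimes\sigma)(\mathsf{d}\mathsf{u}\otimes 1)]$. By \textbf{[D.1]} in $\mathbb{X}$ the lower entry vanishes, which matches the $1$-st component of $\overline{\mathsf{u}}\boxtimes 1$ (with the unit isomorphisms suppressed).
\item \emph{Leibniz rule \textbf{[D.2]} and chain rule \textbf{[D.4]}.} Expand $\overline{\mathsf{S}}(\phi)\boxtimes\overline{\mathsf{S}}(\phi)\boxtimes\phi$ and $\overline{\mathsf{S}}\,\overline{\mathsf{S}}(\phi)\boxtimes\overline{\mathsf{S}}(\phi)$ as biproducts of tensor products via distributivity. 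Both sides become matrices with entries built from $\mathsf{m}$, $\mu$, $\mathsf{d}$ and symmetries, and I compare them entry by entry. Entries involving only the "identity" row of $\overline{\mathsf{d}}$ agree by naturality and associativity. The remaining entries reduce to \textbf{[D.2]} (respectively \textbf{[D.4]}) in $\mathbb{X}$ tensored with $1_{A_1}$ and conjugated by symmetries. For \textbf{[D.4]} I also expect to need \textbf{[D.2]} and the fact that $\mu$ is a monoid morphism, since $\overline{\mu}$ involves $\mathsf{m}$.
\item \emph{Interchange rule \textbf{[D.5]}.} In the $1$-st component this compares the entries landing in $\mathsf{S}(A_0)\otimes A_1\otimes A_0\otimes A_0$. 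It reduces to \textbf{[D.5]} in $\mathbb{X}$ together with symmetry coherences; the other summands match trivially.
\end{enumerate}

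The main obstacle is the bookkeeping in \textbf{[D.2]} and \textbf{[D.4]}. One has to keep track of which biproduct summand of an iterated $\boxtimes$ each term lands in, and of the exact symmetry isomorphisms that appear. My approach would be to write each $1$-st component as a row or column matrix, one summand at a time. I would then use the derivation/monoid dictionary of Proposition \ref{prop:mon=der-arrow} only as a guide to the shape of the entries, since commutativity of $\overline{\mathsf{m}}$ already pins down the mixed entries as symmetric conjugates of each other.
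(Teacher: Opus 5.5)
Your proposal is correct and follows essentially the same route as the paper: $\overline{\mathsf{d}}$ is well defined by \textbf{[D.5]}, and each axiom in $\mathsf{Arr}[\mathbb{X}]$ reduces componentwise to its counterpart in $\mathbb{X}$, using \textbf{[D.1]} for \textbf{[D.3]} and \textbf{[D.2]} for \textbf{[D.4]}. You also correctly read the paper's $\mathsf{d}_{\mathsf{S}(A_0)}$ as $\mathsf{d}_{A_0}$, and in the \textbf{[D.4]} check you will not actually need $\mu$ to be a monoid morphism: after applying \textbf{[D.2]} and then \textbf{[D.4]} to $\mathsf{d}\circ\mathsf{m}\circ(\mu\otimes 1)$, the remaining mixed entry matches using only commutativity and associativity of $\mathsf{m}$.
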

\begin{proof} Let us first explain why $\overline{\mathsf{d}}$ is well-defined. Using the interchange rule \textbf{[D.5]}, we get that the following diagram commutes: 
\[ \xymatrixcolsep{5pc}\xymatrix{ \mathsf{S}(A_0) \ar[ddd]_-{\mathsf{d}} \ar[r]^-{\mathsf{d}} \ar@{}[ddr]|-{\text{\normalfont \textbf{[D.5]}}} & \mathsf{S}(A_0) \otimes A_0 \ar[r]^-{1 \otimes \phi} \ar[dd]|-{\begin{bmatrix} 1 \otimes 1 \\ (1 \otimes \sigma) \circ (\mathsf{d} \otimes 1) \end{bmatrix}}  & \mathsf{S}(A_0) \otimes A_1 \ar[ddd]|-{\begin{bmatrix} 1 \otimes 1  \\ (1 \otimes \sigma) \circ (\mathsf{d} \otimes 1) \end{bmatrix}} \\ 
& \\ 
& (\mathsf{S}(A_0) \otimes A_0) \oplus (\mathsf{S}(A_0) \otimes A_0 \otimes A_0) \ar[dr]^-{ ~~~~~~~~~\begin{bmatrix} 1 \otimes \phi & 0 \\ 0 & 1 \otimes \phi \otimes 1 \end{bmatrix} } &  \\ 
\mathsf{S}(A_0) \otimes A_0  \ar[ur]^-{ \begin{bmatrix} 1 \otimes 1 \\ \mathsf{d} \otimes 1 \end{bmatrix} } \ar[rr]_-{ \begin{bmatrix} 1 \otimes \phi \\ (1 \otimes \phi \otimes 1) \circ (\mathsf{d} \otimes 1) \end{bmatrix} } & & (\mathsf{S}(A_0) \otimes A_1) \oplus (\mathsf{S}(A_0) \otimes A_1 \otimes A_0).
  } \]
  Thus $\overline{\mathsf{d}}: \overline{\mathsf{S}}\left( \phi: A_0 \to A_1 \right) \to \overline{\mathsf{S}}\left( \phi: A_0 \to A_1 \right) \boxtimes \left( \phi: A_0 \to A_1 \right)$ is indeed a map in the arrow category. Clearly naturality of $\mathsf{d}$ implies that $\overline{\mathsf{d}}$ is a natural transformation as well. 

  \smallskip 
  
  So now we must show that $\overline{\mathsf{d}}$ satisfies the five deriving transformation axioms. However these are mostly straightforward by construction (though we will not write down all the diagrams explicitly as they are not necessarily more enlightening). Indeed from the constant rule \textbf{[D.1]} and the annihilation property of the zero map, we get that the constant rule \textbf{[D.1]} holds in the arrow category; using the Leibniz rule \textbf{[D.2]} and some obvious symmetry coherences, we get that the Leibniz rule also holds in the arrow category; using the linear rule \textbf{[D.3]} and the constant rule \textbf{[D.1]},  gives us that the linear rule \textbf{[D.3]} holds in the arrow category; while using the chain rule \textbf{[D.4]} and the Leinbiz rule \textbf{[D.2]} will give us that the chain rule \textbf{[D.4]} holds in the arrow category; and finally from the interchange rule \textbf{[D.5]} and again some obvious symmetry coherences, we get that the interchange rule also \textbf{[D.5]} holds in the arrow category. We leave it as an exercise for the motivated reader to write down all the necessary diagrams. So we conclude that $\overline{\mathsf{d}}$ is a deriving transformation and thus the arrow category is a differential category. 
\end{proof}

In summary, for a differential category with finite biproducts, its arrow category will again be a differential category (with finite biproducts). This gives us a novel source of examples of differential categories.

\smallskip

Now recall that we mentioned that differential categories were first introduced to give the categorical semantics of differential linear logic \cite{ehrhard2017introduction}. However, in such models, we require more from our differential modality; namely, that the canonical natural transformations (in the presence of biproducts) be isomorphisms. These are called the \textit{Seely isomorphisms}, which are a fundamental notion in the categorical semantics of (Differential) Linear Logic. So we conclude this section with a brief discussion about the Seely isomorphisms. 

\begin{definition}\label{def:Seely} For an additive symmetric monoidal category $\mathbb{X}$ with finite biproducts, a \textbf{differential storage modality} \cite[Dual of Def 10]{Blute2019} is a differential modality $(\mathsf{S}, \mu, \eta, \mathsf{m}, \mathsf{u}, \mathsf{d})$ such that the canonical natural transformations $\chi_{A,B}: \mathsf{S}(A) \otimes \mathsf{S}(B) \to \mathsf{S}(A \oplus B)$ and $\chi_0: I \to \mathsf{S}(0)$, called the \textbf{Seely maps}, defined respectively as follows: 
\begin{equation}\begin{gathered}\label{diag:seely} 
\xymatrixcolsep{5pc}\xymatrix{ \chi_{A,B}: \mathsf{S}(A) \otimes \mathsf{S}(B) \ar[rr]^-{\mathsf{S}\left( \begin{bmatrix} 1_A \\ 0 \end{bmatrix} \right) \otimes \mathsf{S}\left( \begin{bmatrix} 0 \\ 1_B \end{bmatrix} \right)} && \mathsf{S}(A \oplus B) \otimes  \mathsf{S}(A \oplus B) \ar[r]^-{\mathsf{m}_{A \oplus B}} & \mathsf{S}(A \oplus B) 
  } \\
  \xymatrixcolsep{3pc}\xymatrix{ \chi_0: I \ar[r]^-{\mathsf{u}_\mathsf{0}} & \mathsf{S}(\mathsf{0}) 
  }
\end{gathered}\end{equation}
are natural isomorphisms. In this case, we call $\chi$ and $\chi_0$ the \textbf{Seely isomorphisms}. 
\end{definition}

\begin{example} $\mathsf{Sym}$ is a differential storage modality \cite[Ex 1]{Blute2019}, since famously the symmetric algebra satisfies that $\mathsf{Sym}(V\otimes W) \cong \mathsf{Sym}(V) \otimes \mathsf{Sym}(W)$ and $\mathsf{Sym}(\mathsf{0}) \cong \mathbb{K}$ \cite[Chap 16, Prop 8.2]{lang2002algebra}.
\end{example}

\begin{example} $\mathsf{S}^\infty$ is not a differential storage modality \cite[Remark 5.16]{cruttwell2019integral}, since for example, it is well known that $\mathcal{C}^\infty(\mathbb{R}^2)$ is not isomorphic to $\mathcal{C}^\infty(\mathbb{R}) \otimes \mathcal{C}^\infty(\mathbb{R})$.
\end{example}

In the arrow category, it is straightforward to work out that the canonical Seely map \[\overline{\chi}_{\left( \phi: A_0 \to A_1 \right),\left( \psi: B_0 \to B_1 \right)}: \overline{\mathsf{S}}\left( \phi: A_0 \to A_1 \right) \boxtimes \overline{\mathsf{S}}\left( \psi: B_0 \to B_1 \right) \to \overline{\mathsf{S}}\left( (\phi: A_0 \to A_1) \oplus (\psi: B_0 \to B_1) \right)\] is given by the following pair of maps: 
\begin{equation}\begin{gathered}\label{diag:seely-arrow} 
\xymatrixcolsep{3pc}\xymatrix{ \mathsf{S}(A_0) \otimes \mathsf{S}(B_0) \ar[r]^-{\chi_{A_0,B_0}} & \mathsf{S}(A_0 \oplus B_0) 
  } \\
  \xymatrixcolsep{9.5pc}\xymatrix{ {\begin{matrix}(\mathsf{S}(A_0) \otimes \mathsf{S}(B_0) \otimes B_1) \\ \oplus (\mathsf{S}(A_0) \otimes A_1 \otimes \mathsf{S}(B_0)) \end{matrix}} \ar[rr]^-{\begin{bmatrix} \chi_{A_0,B_0} \otimes 1_{B_1} & (\chi_{A_0,B_0} \otimes 1_{A_1}) \circ (1_{\mathsf{S}(A_0)} \otimes \sigma_{A_1, \mathsf{S}(B_0)}) \end{bmatrix}} && \mathsf{S}(A_0 \oplus B_0) \otimes (A_1 \oplus B_1) 
  }
\end{gathered}\end{equation}
while $\overline{\chi}_0: \left(0: I \to \mathsf{0} \right) \to \overline{\mathsf{S}}(0: \mathsf{0} \to \mathsf{0})$ is $\overline{\mathsf{u}}_{(0: \mathsf{0} \to \mathsf{0})}$. Clearly if $\chi$ and $\chi_0$ are isomorphisms in $\mathbb{X}$, we get that $\overline{\chi}$ and $\overline{\chi}_0$ are also isomorphisms. So we conclude with the following observation: 

\begin{proposition} If $(\mathsf{S}, \mu, \eta, \mathsf{m}, \mathsf{u}, \mathsf{d})$ is a differential storage modality, then $(\overline{\mathsf{S}}, \overline{\mu}, \overline{\eta}, \overline{\mathsf{m}}, \overline{\mathsf{u}}, \overline{\mathsf{d}})$ is also a differential storage modality. 
\end{proposition}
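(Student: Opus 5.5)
We must show that the Seely maps $\overline{\chi}$ and $\overline{\chi}_0$ in $\mathsf{Arr}[\mathbb{X}]$ are natural isomorphisms. Since Thm \ref{thm:arrow-diff-cat} already makes $\overline{\mathsf{S}}$ a differential modality, only the Seely condition needs checking. The plan has three steps: first confirm the displayed componentwise description (\ref{diag:seely-arrow}) of $\overline{\chi}$; then build explicit inverses in $\mathbb{X}$; finally note that a componentwise inverse of an arrow-category map is automatically a map in $\mathsf{Arr}[\mathbb{X}]$.

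\textbf{Computing $\overline{\chi}$.} Biproducts in $\mathsf{Arr}[\mathbb{X}]$ are pointwise, so the injection $(\phi) \to (\phi)\oplus(\psi)$ is the pair of injections $\left(\begin{bmatrix} 1 \\ 0\end{bmatrix}, \begin{bmatrix} 1 \\ 0\end{bmatrix}\right)$. Hence $\overline{\mathsf{S}}$ of it is $\left(\mathsf{S}\left(\begin{bmatrix} 1 \\ 0\end{bmatrix}\right), \mathsf{S}\left(\begin{bmatrix} 1 \\ 0\end{bmatrix}\right)\otimes \begin{bmatrix} 1 \\ 0\end{bmatrix}\right)$. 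I will $\boxtimes$ these two maps and compose with $\overline{\mathsf{m}}$ from (\ref{diag:overline-m}).

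On the $0$-component this gives exactly $\chi_{A_0,B_0}$. On the $1$-component, I distribute $\mathsf{S}(A_0\oplus B_0)\otimes(A_1\oplus B_1)$ as $(\mathsf{S}(A_0\oplus B_0)\otimes A_1)\oplus(\mathsf{S}(A_0\oplus B_0)\otimes B_1)$. The first summand of the domain then maps into the $B_1$ part via $\chi\otimes 1_{B_1}$, and the second into the $A_1$ part via $(\chi\otimes 1_{A_1})\circ(1\otimes\sigma)$. This uses only naturality of $\sigma$ and the definition of $\chi$. Likewise $\overline{\chi}_0 = (\mathsf{u}_{\mathsf{0}}, 0) = (\chi_0, 0: \mathsf{0}\to \mathsf{0})$, since $\overline{\mathsf{S}}(0:\mathsf{0}\to\mathsf{0})$ has $1$-component $\mathsf{S}(\mathsf{0})\otimes\mathsf{0}\cong\mathsf{0}$.

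\textbf{Inverses.} For $\overline{\chi}_0$, the pair $(\chi_0^{-1}, 1_{\mathsf{0}})$ is inverse, since any endomap of $\mathsf{0}$ is the identity. For $\overline{\chi}$, the $0$-component is inverted by $\chi^{-1}_{A_0,B_0}$. In the distributed form, the $1$-component is a $2\times 2$ anti-diagonal matrix whose entries are $\chi\otimes 1_{B_1}$ and $(\chi\otimes 1_{A_1})\circ(1\otimes\sigma)$. Each entry is an isomorphism with inverses $\chi^{-1}\otimes 1$ and $(1\otimes\sigma^{-1})\circ(\chi^{-1}\otimes 1)$. The inverse matrix is therefore the anti-diagonal (transposed) matrix of these inverses, which is verified by matrix multiplication.

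\textbf{Being maps in $\mathsf{Arr}[\mathbb{X}]$.} If $(f_0,f_1)$ is a map in $\mathsf{Arr}[\mathbb{X}]$ with both components invertible, then from $\phi'\circ f_0 = f_1\circ\phi$ we get $f_1^{-1}\circ\phi' = \phi\circ f_0^{-1}$. So $(f_0^{-1},f_1^{-1})$ is again a map in $\mathsf{Arr}[\mathbb{X}]$, and it is the inverse. Naturality of $\overline{\chi}$ is inherited from the construction, and inverses of natural isomorphisms are natural. The only real care point is the bookkeeping in the computation of the $1$-component of $\overline{\chi}$ (which summand goes where under distributivity). Everything else is formal.
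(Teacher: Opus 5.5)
Your proposal is correct and takes the same route as the paper. Both compute $\overline{\chi}$ and $\overline{\chi}_0$ componentwise and observe that they are assembled from $\chi$, $\chi_0$, symmetries and distributivity isomorphisms, and hence are invertible. You additionally spell out what the paper dismisses with ``clearly'': the anti-diagonal inverse of the $1$-component, the invertibility of the map between zero objects, and the fact that a componentwise inverse of an arrow-category map is again an arrow-category map.
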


\section{A Tangent Category and CDC of Derivations}\label{sec:tan}

In the previous section we showed that for a differential category with biproducts, its differential modality $\mathsf{S}$ lifts to a differential modality on the arrow category, and so the arrow category is also a differential category with finite biproducts. There are some important immediate consequences of this fact in that we get a \textit{tangent category} and a \textit{cartesian differential category} (CDC) of $\mathsf{S}$-derivations. We will not review tangent categories and cartesian differential categories in full here, instead we invite the reader to see \cite{cockett2014differential,cockett_et_al:LIPIcs:2020:11660} for an introduction to tangent categories and \cite{blute2009cartesian,cockett2014differential} for an introduction on cartesian differential categories. Also see \cite[Figure 1]{cockett_et_al:LIPIcs:2020:11660} for a practical map of the theory of differential categories, which highlights all the connections between differential categories, tangent categories, and cartesian differential categories. 

\smallskip

Now for the remainder of this section, again let $\mathbb{X}$ be a differential category with differential modality $(\mathsf{S}, \mu, \eta, \mathsf{m}, \mathsf{u}, \mathsf{d})$ and assume that $\mathbb{X}$ also have finite biproducts.

\smallskip

For a differential modality, its algebras form a \textit{tangent category}. Very briefly, a \textbf{tangent category} \cite{cockett2014differential} is a category equipped with an endofunctor $\mathsf{T}$, called the \textbf{tangent bundle functor} which should be thought of as assigning each object $A$ to its ``abstract tangent bundle" $\mathsf{T}(A)$. This tangent bundle functor comes equipped with various natural transformations which capture key properties of the tangent bundle from differential geometry. 

\smallskip

In our setting, the Eilenberg-Moore category $\mathsf{S}\text{-}\mathsf{ALG}$ of a differential modality $\mathsf{S}$ is a tangent category \cite[Thm 22]{cockett_et_al:LIPIcs:2020:11660}, where for an $\mathsf{S}$-algebra $(A, \nu)$, its tangent bundle is the $\mathsf{S}$-algebra $\mathsf{T}(A, \nu) = (A \oplus A, \nu^\flat)$, where the $\mathsf{S}$-algebra structure map $\nu^\flat: \mathsf{S}(A \oplus A) \to A \oplus A$ is defined as follows: 

\begin{align}
  \xymatrixcolsep{10pc}\xymatrix{ \mathsf{S}(A \oplus A) \ar[rr]^-{\begin{bmatrix} \nu \circ \mathsf{S}\left( \begin{bmatrix} 1_A & 0 \end{bmatrix} \right) \\
\mathsf{m}^\nu \circ (\nu \otimes 1_A) \circ \left( \mathsf{S}\left( \begin{bmatrix} 1_A & 0 \end{bmatrix} \right) \otimes \begin{bmatrix} 0 & 1_A \end{bmatrix} \right) \circ \mathsf{d}_{A \oplus A} \end{bmatrix}} && A \oplus A.
  }
\end{align}
As explained in \cite[Sec 5]{cockett_et_al:LIPIcs:2020:11660}, one should think of this as a sort of ``dual numbers construction" on an $\mathsf{S}$-algebra (as we will see in the examples below). In fact, the induced multiplication $\mathsf{m}^{\nu^\flat}$ for $\mathsf{T}(A, \nu)$ is precisely a generalized version of the dual numbers nilpotent multiplication. 
\smallskip

Now applying this construction to our differential modality on the arrow category gives us that $\overline{\mathsf{S}}\text{-}\mathsf{ALG}$ is also a tangent category. In other words, by passing through Thm \ref{thm:deri=alg} gives us that $\mathsf{S}$-derivations form a tangent category with tangent bundle functor worked out as follows: 

\begin{corollary}\label{cor:tan} $\mathsf{S}\text{-}\mathsf{DER}$ is a tangent category where for an $\mathsf{S}$-derivation $\mathsf{D}: (A, \nu) \to (M, \alpha)$, 
\[\mathsf{T}\left( \mathsf{D}: (A, \nu) \to (M, \alpha) \right) = \left( \begin{bmatrix} \mathsf{D} & 0 \\ 0 & \mathsf{D} \end{bmatrix}: (A \oplus A, \nu^\flat) \to (M \oplus M, \alpha^\flat)\right)\] 

where the action $\alpha^\flat: (A \oplus A) \otimes (M \oplus M) \to M \oplus M$ is induced from the following map (up to distributivity isomorphism): 
\begin{align}
  \xymatrixcolsep{5pc}\xymatrix{ (A \otimes M) \oplus (A \otimes M) \oplus (A \otimes M)  \oplus (A \otimes M)  \ar[rr]^-{\begin{bmatrix} \alpha & 0 & 0 & 0 \\ 
  0 & \alpha & \alpha & 0 \end{bmatrix}} && M \oplus M.
  }
\end{align}
\end{corollary}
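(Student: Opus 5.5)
The strategy is to transport structure along the isomorphism of Thm \ref{thm:deri=alg}. By Thm \ref{thm:arrow-diff-cat}, $\mathsf{Arr}[\mathbb{X}]$ equipped with $\boxtimes$ is a differential category with finite biproducts, and $\overline{\mathsf{S}}$ is its differential modality. So \cite[Thm 22]{cockett_et_al:LIPIcs:2020:11660} applies to $\overline{\mathsf{S}}$ and makes $\overline{\mathsf{S}}\text{-}\mathsf{ALG}$ a tangent category. Since $\mathcal{F}$ and $\mathcal{F}^{-1}$ are inverse isomorphisms of categories, the tangent structure transfers along them. We set $\mathsf{T} := \mathcal{F} \circ \overline{\mathsf{T}} \circ \mathcal{F}^{-1}$ and conjugate every structural natural transformation in the same way. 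The tangent axioms, including the required pullbacks, are preserved because an isomorphism of categories preserves all limits and all equations. The existence of the tangent structure is therefore immediate. What actually needs work is the explicit formula for $\mathsf{T}$.

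\textbf{Step 1: the tangent algebra.} Start from an $\mathsf{S}$-derivation $\mathsf{D}: (A,\nu) \to (M,\alpha)$ and apply $\mathcal{F}^{-1}$, which gives the $\overline{\mathsf{S}}$-algebra $((\mathsf{D}: A \to M), (\nu, \nu_\alpha))$. Biproducts in $\mathsf{Arr}[\mathbb{X}]$ are pointwise, so the underlying arrow of the tangent algebra is
\[(\mathsf{D}:A \to M) \oplus (\mathsf{D}:A\to M) = \begin{bmatrix} \mathsf{D} & 0 \\ 0 & \mathsf{D}\end{bmatrix}: A\oplus A \to M \oplus M.\]
Its structure map is $(\nu,\nu_\alpha)^\flat$, computed with the formula for $\nu^\flat$ using the arrow-category data $\overline{\mathsf{S}}$, $\overline{\mathsf{d}}$, $\overline{\mathsf{m}}^{(\nu,\nu_\alpha)}$ and the projections. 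We then read off components. Since $\overline{\mathsf{S}}$, $\overline{\mathsf{d}}$ and the projections all have first component equal to $\mathsf{S}$, $\mathsf{d}$ and the projections of $\mathbb{X}$, and the induced multiplication $\overline{\mathsf{m}}^{(\nu,\nu_\alpha)}$ has first component $\mathsf{m}^\nu$, the first component of $(\nu,\nu_\alpha)^\flat$ is exactly $\nu^\flat$. So the $\mathsf{S}$-algebra part of $\mathsf{T}(\mathsf{D})$ is $(A\oplus A,\nu^\flat)$, as claimed.

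\textbf{Step 2: the action.} Next apply $\mathcal{F}$. It does not change the underlying arrow, and it produces the action $\alpha^\flat := \nu_1^\flat \circ (\eta_{A\oplus A} \otimes 1_{M\oplus M})$, where $\nu_1^\flat : \mathsf{S}(A\oplus A)\otimes (M\oplus M) \to M\oplus M$ is the second component of $(\nu,\nu_\alpha)^\flat$. To evaluate this we expand $\nu_1^\flat$ using the second components of $\overline{\mathsf{d}}$ and $\overline{\mathsf{m}}$ from (\ref{diag:overline-d}) and (\ref{diag:overline-m}). After precomposing with $\eta\otimes 1$, we simplify using \textbf{[D.3]} ($\mathsf{d}\circ\eta = \mathsf{u}\otimes 1$), the $\mathsf{S}$-algebra unit law $\nu\circ\eta = 1$, $\mathsf{S}(f)\circ \eta = \eta\circ f$, and the unit law $\mathsf{m}^\nu\circ(\mathsf{u}^\nu \otimes 1) = 1$.

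\textbf{Main obstacle.} The hardest part will be this bookkeeping: tracking the four summands $A_i\otimes M_j$ through the distributivity isomorphisms and the symmetries inside $\overline{\mathsf{m}}$. The expected result matches the dual numbers multiplication $(a,a')\cdot(m,m') = (am, am' + a'm)$. The component $A_1 \otimes M_1$ goes to the first copy via $\alpha$. The mixed components $A_1\otimes M_2$ and $A_2\otimes M_1$ go to the second copy via $\alpha$. The component $A_2 \otimes M_2$ is killed, because it involves a second derivative evaluated at a linear element, and \textbf{[D.3]} followed by \textbf{[D.1]} makes it vanish. To organise the computation, I would first compute the analogous formula for $\mathsf{m}^{\nu^\flat}$ in the base category, which is the known dual numbers multiplication from \cite[Sec 5]{cockett_et_al:LIPIcs:2020:11660}. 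Then I would observe that the second component of the arrow computation has the same shape, with the $A$-factor in the second slot replaced by $M$ and $\mathsf{m}^\nu$ replaced by $\alpha$. This produces the displayed matrix $\begin{bmatrix} \alpha & 0 & 0 & 0 \\ 0 & \alpha & \alpha & 0\end{bmatrix}$.
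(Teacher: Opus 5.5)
Your proposal is correct and follows the paper's argument: apply \cite[Thm 22]{cockett_et_al:LIPIcs:2020:11660} to the differential modality $\overline{\mathsf{S}}$ on $\mathsf{Arr}[\mathbb{X}]$ (Thm~\ref{thm:arrow-diff-cat}), then transport the tangent structure along the isomorphism $\mathcal{F}$, $\mathcal{F}^{-1}$ of Thm~\ref{thm:deri=alg}; your componentwise computation of $\nu^\flat$ and of $\alpha^\flat = \nu_1^\flat \circ (\eta \otimes 1)$ supplies the explicit formula that the paper leaves implicit. One small correction to your heuristic: killing the $A_2 \otimes M_2$ entry involves no second derivative and no \textbf{[D.1]} (it uses the module unit law $\alpha \circ (\mathsf{u}^\nu \otimes 1_M) = 1_M$ rather than the monoid one), because, writing $\pi_1, \pi_2$ for the biproduct projections, after \textbf{[D.3]} and the unit laws the only surviving terms are $\alpha \circ (\pi_1 \otimes \pi_1)$, $\alpha \circ (\pi_1 \otimes \pi_2)$ and $\alpha \circ (\pi_2 \otimes \pi_1)$, each of which already vanishes on that summand.
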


\begin{example}\label{ex:tan-poly}\textbf{\textsf{A tangent category of ordinary derivations.}} $\mathsf{Sym}\text{-}\mathsf{ALG}$ is isomorphic to the category of commutative $\mathbb{K}$-algebras, which is a tangent category whose tangent bundle functor is given by dual numbers \cite[Ex 3.3]{cockett_et_al:LIPIcs:2020:11660}. Now the category of $\mathsf{Sym}$-derivations is of course simply the category of ordinary derivations, so $\mathsf{Sym}\text{-}\mathsf{DER} \simeq \mathsf{DER}\left[ \mathsf{VEC}_\mathbb{K} \right]$ (Ex \ref{ex:der-sym}). Applying the above construction gives us that derivations form a tangent category again given by dual numbers. Indeed, recall that for a commutative $\mathbb{K}$-algebra $A$, its ring of dual numbers is the commutative $\mathbb{K}$-algebra $A[\epsilon]= \lbrace a + b\epsilon \vert~ a,b \in A, \epsilon^2=0 \rbrace$. Now for an $A$-module $M$, define the $A[\epsilon]$-module $M[\epsilon] = \lbrace m + n \epsilon \vert~ m,n \in M \rbrace$, with action induced by $\epsilon^2=0$. Then the tangent bundle for derivations sends a derivation $\mathsf{D}:A \to M$ to the derivation $\mathsf{D}[\epsilon]: A[\epsilon] \to M[\epsilon]$ defined as $D[\epsilon](a + b \epsilon) = \mathsf{D}(a) + \mathsf{D}(b)\epsilon$. 
\end{example}

\begin{example}\label{ex:tan-smooth}\textbf{A tangent category of \textsf{$\mathcal{C}^\infty$-derivations.}} $\mathsf{S}^\infty\text{-}\mathsf{ALG}$ is isomorphic to the category of $\mathcal{C}^\infty$-rings, which is a tangent category whose tangent bundle functor is again given by dual numbers \cite[Ex 23.2]{cockett_et_al:LIPIcs:2020:11660}. Indeed, for a $\mathcal{C}^\infty$-ring $A$, its ring of dual numbers $A[\epsilon]$ is again a $\mathcal{C}^\infty$-ring where for every smooth function $f: \mathbb{R}^n \to \mathbb{R}$, $\Phi[\epsilon]_f: A[\epsilon]^n \to A[\epsilon]$ is defined as follows: 
\[ \Phi[\epsilon]_f(a_1 + b_1 \epsilon, \hdots, a_n + b_n \epsilon) = \Phi_f(a_1, \hdots, a_n) + \sum \limits_{i=1}^{n} \frac{\partial f}{\partial x_i}(a_1, \hdots,a_n)b_i \epsilon. \]
 Now $\mathsf{S}^\infty\text{-}\mathsf{DER}$ is isomorphic to the category of $\mathcal{C}^\infty$-derivations (Ex \ref{ex:der-smooth}), which will be a tangent category whose tangent bundle functor sends a $\mathcal{C}^\infty$-derivation $\mathsf{D}:A \to M$ to the $\mathcal{C}^\infty$-derivation $\mathsf{D}[\epsilon]: A[\epsilon] \to M[\epsilon]$, where $M[\epsilon]$ and $\mathsf{D}[\epsilon]$ are defined as in the previous example. 
\end{example}

On the other hand, the free algebras of a differential modality form a \textit{cartesian differential category}. Very briefly, a \textbf{cartesian differential category} \cite{blute2009cartesian} is a category with finite products $\times$ equipped with a \textbf{differential combinator} $\mathsf{D}$ which is an operator which for every map $f: A \to B$ produces its derivative $\mathsf{D}[f]: A \times A \to B$. It is also worth mentioning that a cartesian differential category is also a tangent category \cite[Prop 4.7]{cockett2014differential}. 

\smallskip

In our setting, the \textit{opposite} of the Kleisli category of our differential modality is a cartesian differential category \cite[Prop 3.2.1]{blute2009cartesian}. Recall that for a monad $(\mathsf{S}, \mu, \eta)$, its Kleisli category $\mathsf{KL}(\mathsf{S})$ is the category whose objects are the same as $\mathbb{X}$ but where a map from $A \to B$ in the Kleisli category is a map of type $A \to \mathsf{S}(B)$ in $\mathbb{X}$. Alternatively, as is well-known, the Kleisli category $\mathsf{KL}(\mathsf{S})$ is equivalent to the full subcategory of free $\mathsf{S}$-algebras $(\mathsf{S}(A), \mu_A)$, which we will denote by $\mathsf{S}\text{-}\mathsf{ALG}_{\text{free}}$, so $\mathsf{KL}(\mathsf{S}) \simeq \mathsf{S}\text{-}\mathsf{ALG}_{\text{free}}$. 

\smallskip

For our differential modality $\mathsf{S}$, the opposite category of $\mathsf{KL}(\mathsf{S})$ (equiv. $\mathsf{S}\text{-}\mathsf{ALG}_{\text{free}}$) is a cartesian differential category or, in other words, one could say that $\mathsf{KL}(\mathsf{S})$ (equiv. $\mathsf{S}\text{-}\mathsf{ALG}_{\text{free}}$) is a cocartesian differential category. So for a Kleisli map $f: A \to \mathsf{S}(B)$, the differential combinator would send it to a map $\mathsf{D}[f]: A \to \mathsf{S}(B \oplus B)$ (where here recall the product in the opposite category $\mathsf{KL}(\mathsf{S})$ would be the coproduct, which is given by the biproduct $\oplus$ of our base category but which is not a biproduct in the Kleisli category) which is defined as the following composite: 
\begin{align}
  \xymatrixcolsep{5pc}\xymatrix{ A \ar[r]^-{f} & \mathsf{S}(B) \ar[r]^-{\mathsf{d}_B} & \mathsf{S}(B) \otimes B \ar[r]^-{1_{\mathsf{S}(B)} \otimes \eta_B} & \mathsf{S}(B) \otimes \mathsf{S}(B) \ar[r]^-{\chi_{B,B}} & \mathsf{S}(B \oplus B)
  }
\end{align}
where $\chi$ is the Seely map as defined in (\ref{diag:seely}). 

\smallskip

In the case of $(\overline{\mathsf{S}}, \overline{\mu}, \overline{\eta})$, the free $\overline{\mathsf{S}}$-algebras correspond to $\mathsf{S}$-derivations of the form
\[(1_{\mathsf{S}(A_0)} \otimes \phi) \circ \mathsf{d}_{A_0}: (\mathsf{S}(A_0), \mu_{A_0}) \to (\mathsf{S}(A_0) \otimes A_1, \mathsf{m}_{A_0} \otimes 1_{A_1})\]
for some map $\phi: A_0 \to A_1$. Then let $\mathsf{S}\text{-}\mathsf{DER}_{\text{free}}$ be the full subcategory of $\mathsf{S}\text{-}\mathsf{DER}$ of this form. Thus we have that $\mathsf{KL}(\overline{\mathsf{S}}) \simeq \overline{\mathsf{S}},\text{-}\mathsf{ALG}_{\text{free}} \cong \mathsf{S}\text{-}\mathsf{DER}_{\text{free}}$, and so we may conclude that:

\begin{corollary}\label{cor:CDC} The opposite category of $\mathsf{S}\text{-}\mathsf{DER}_{\text{free}}$ is a cartesian differential category. 
\end{corollary}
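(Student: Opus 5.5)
The plan is to chain together three facts: the general result about Kleisli categories of differential modalities, Thm \ref{thm:arrow-diff-cat}, and Thm \ref{thm:deri=alg}.

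First, Thm \ref{thm:arrow-diff-cat} says that $(\overline{\mathsf{S}}, \overline{\mu}, \overline{\eta}, \overline{\mathsf{m}}, \overline{\mathsf{u}}, \overline{\mathsf{d}})$ is a differential modality on $\mathsf{Arr}[\mathbb{X}]$. By Lemma \ref{lem:arrow-monoidal}, $\mathsf{Arr}[\mathbb{X}]$ is an additive symmetric monoidal category with finite biproducts. These are exactly the hypotheses of \cite[Prop 3.2.1]{blute2009cartesian} (in its dual form). That result therefore gives that $\mathsf{KL}(\overline{\mathsf{S}})^{op}$ is a cartesian differential category. Its differential combinator is built, as in the displayed formula above, from $\overline{\mathsf{d}}$, $\overline{\eta}$, $\overline{\mathsf{m}}$, and the Seely map $\overline{\chi}$ of (\ref{diag:seely-arrow}); the Seely map is only used as a map here, not as an isomorphism.

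Second, we transport this structure along equivalences. The standard comparison functor sends a Kleisli map $X \to \overline{\mathsf{S}}(Y)$ to its $\overline{\mu}$-extension. This gives an equivalence $\mathsf{KL}(\overline{\mathsf{S}}) \simeq \overline{\mathsf{S}}\text{-}\mathsf{ALG}_{\text{free}}$. The isomorphism $\mathcal{F}$ of Thm \ref{thm:deri=alg} restricts to full subcategories. It sends the free algebra $(\overline{\mathsf{S}}(\phi), \overline{\mu}_\phi)$ to the $\mathsf{S}$-derivation $(1 \otimes \phi)\circ \mathsf{d}: (\mathsf{S}(A_0),\mu) \to (\mathsf{S}(A_0)\otimes A_1, \mathsf{m}\otimes 1)$. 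To see this, compute $\alpha_{\nu_1}$ for $\nu_1 = (\mathsf{m}\otimes 1)\circ(\mu \otimes 1 \otimes 1)$. We get $(\mathsf{m}\otimes 1)\circ(\mu\eta \otimes 1\otimes 1) = \mathsf{m}\otimes 1$ by the monad unit law. Thus $\overline{\mathsf{S}}\text{-}\mathsf{ALG}_{\text{free}} \cong \mathsf{S}\text{-}\mathsf{DER}_{\text{free}}$. Composing, $\mathsf{S}\text{-}\mathsf{DER}_{\text{free}}^{op} \simeq \mathsf{KL}(\overline{\mathsf{S}})^{op}$.

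Third, we argue that the cartesian differential structure transfers along an equivalence of categories. Finite products transfer along equivalences. A differential combinator can be transported by conjugating with the equivalence and its pseudo-inverse. The cartesian differential axioms are equational in composition, products, and the combinator, so they are preserved; naturality of the unit and counit isomorphisms handles the coherence. This is the step I expect to be the main obstacle, though it is a bookkeeping issue rather than a conceptual one. The cleanest way to avoid it is to define $\mathsf{S}\text{-}\mathsf{DER}_{\text{free}}$ so that it is literally isomorphic, not just equivalent, to $\mathsf{KL}(\overline{\mathsf{S}})$. One does this by indexing objects by $\phi$: distinct $\phi$ give distinct free derivations, since $\phi$ is recovered from the free derivation by precomposing with $\eta$ and using \textbf{[D.3]}, $(1\otimes\phi)\circ\mathsf{d}\circ\eta = \mathsf{u}\otimes\phi$. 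One then checks that morphisms agree by the free–forgetful adjunction. Products in $\mathsf{S}\text{-}\mathsf{DER}_{\text{free}}^{op}$ then come out explicitly as $\overline{\mathsf{S}}(\phi\oplus\psi)$, i.e.\ the free derivation on $\phi \oplus \psi$.
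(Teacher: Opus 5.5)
Your argument is the paper's argument. Thm~\ref{thm:arrow-diff-cat} with the dual of \cite[Prop 3.2.1]{blute2009cartesian} makes $\mathsf{KL}(\overline{\mathsf{S}})^{op}$ a cartesian differential category, and you then pass along $\mathsf{KL}(\overline{\mathsf{S}}) \simeq \overline{\mathsf{S}}\text{-}\mathsf{ALG}_{\text{free}} \cong \mathsf{S}\text{-}\mathsf{DER}_{\text{free}}$; your check that $\alpha_{\nu_1} = \mathsf{m}\otimes 1$ and your note on transporting structure along an equivalence spell out steps the paper leaves implicit.

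One correction to your optional aside: distinct $\phi$ need not give distinct free derivations. \textbf{[D.3]} only recovers $\mathsf{u}\otimes\phi$, and that does not determine $\phi$ without something like an augmentation splitting $\mathsf{u}$. For example, for the trivial differential modality $\mathsf{S}(A)=\mathsf{0}$, every $\phi: A_0 \to A_1$ yields the same zero derivation. Nothing in your proof depends on this, since the transport argument, or indexing objects by $\phi$ by definition, suffices.
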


\bibliographystyle{plain}     
\bibliography{references}   

@article{blute2006differential,
  title={{Differential Categories}},
  author={Blute, R. F. and Cockett, J. R. B. and Seely, R. A. G.},
  journal={{Mathematical Structures Computer Science}},
  year={2006},
  publisher={Cambridge Univ Press}
}

@article{blute2015derivations,
  title={{Derivations in Codifferential Categories}},
  author={Blute, R. F. and Lucyshyn-Wright, R. B. B. and O'Neill, K.},
 journal={{Cahiers de Topologie et G{\'e}om{\'e}trie Diff{\'e}rentielle Cat{\'e}goriques}},
  volume={57},
  pages={243--280},
  year={2016}
}

@Article{Blute2019,
author="Blute, R.F.
and Cockett, J.R.B.
and Lemay, J-S. P.
and Seely, R.A.G.",
title="{Differential Categories Revisited}",
journal="{Applied Categorical Structures}",
year="2020",
volume="28", 
pages="171-235",
doi="10.1007/s10485-019-09572-y"}

@article{lemay2019differential,
  title={{Differential Algebras in Codifferential Categories}},
  author={Lemay, J.-S. P.},
  journal={{Journal of Pure and Applied Algebra}},
  volume={223},
  number={10},
  pages={4191--4225},
  year={2019},
  publisher={Elsevier}
}

@article{ehrhard2017introduction,
  title={{An introduction to Differential Linear Logic: proof-nets, models and antiderivatives}},
  author={Ehrhard, T.},
  journal={{Mathematical Structures Computer Science}},
  year={2017},
  publisher={Cambridge University Press}
}

@article{lang2002algebra,
  title={Algebra, revised 3rd ed},
  author={Lang, S.},
  journal={{Graduate Texts in Mathematics}},
  volume={211},
  year={2002}
}

@book{moerdijk2013models,
  title={{Models for smooth infinitesimal analysis}},
  author={Moerdijk, I. and Reyes, G. E.},
  year={2013},
  publisher={Springer Science \& Business Media}
}

@article{cruttwell2019integral,
  title={{Integral and Differential structure on the Free {$C^\infty$}-ring Modality}},
  author={Cruttwell, G. S. H. and Lemay, J-S. P. and Lucyshyn-Wright, R. B. B.},
journal={{Cahiers de topologie et g{\'e}om{\'e}trie diff{\'e}rentielle cat{\'e}goriques}},
volume={62}, 
number={2}, 
pages={116--176},
  year={2021}
}

@phdthesis{o2017smoothness,
  title={{Smoothness in Codifferential Categories}},
  author={O'Neill, K.},
  year={2017},
  school={Universit{\'e} d'Ottawa/University of Ottawa}
}

@article{blute2011kahler,
  title={{K{\"a}hler Categories}},
  author={Blute, R. and Cockett, J. R. B. and Porter, T. and Seely, R. A. G.},
  journal={Cahiers de topologie et g{\'e}om{\'e}trie diff{\'e}rentielle cat{\'e}goriques},
  volume={52},
  number={4},
  pages={253--268},
  year={2011}
}

@article{dubuc19841,
  title={{On 1-form classifiers}},
  author={Dubuc, E.and Kock, A.},
  journal={{Communications in Algebra}},
  volume={12},
  number={12},
  pages={1471--1531},
  year={1984},
  publisher={Taylor \& Francis}
}

@article{joyce2011introduction,
  title={{An introduction to {$C^\infty$}-schemes and {$C^\infty$}-algebraic geometry}},
  author={Joyce, D.},
  journal={{Surveys in Differential Geometry}},
  volume={17},
  year={2012}
}

@article{blute2009cartesian,
  title={{Cartesian Differential Categories}},
  author={Blute, R. F. and Cockett, J. R. B. and Seely, R. A. G.},
  journal={{Theory and Applications of Categories}},
  volume={22},
  number={23},
  pages={622--672},
  year={2009}
}

@article{cockett2014differential,
  title={{Differential Structure, Tangent Structure, and {SDG}}.},
  author={Cockett, J. R. B. and Cruttwell, G. S. H.},
  journal={{Applied Categorical Structures}},
  volume={22},
  number={2},
  pages={331--417},
  year={2014}
}

@InProceedings{cockett_et_al:LIPIcs:2020:11660,
  author =	{Cockett, J. R. B. and Lemay, J.-S. P. and Lucyshyn-Wright, R. B. B.},
  title =	{{Tangent Categories from the Coalgebras of Differential Categories}},
  booktitle =	{{28th EACSL Annual Conference on Computer Science Logic (CSL 2020)}},
  pages =	{17:1--17:17},
  series =	{{Leibniz International Proceedings in Informatics (LIPIcs)}},
  ISBN =	{978-3-95977-132-0},
  ISSN =	{1868-8969},
  year =	{2020},
  volume =	{152},
  publisher =	{Schloss Dagstuhl--Leibniz-Zentrum fuer Informatik},
  address =	{Dagstuhl, Germany},
  doi =		{10.4230/LIPIcs.CSL.2020.17},
}

@article{riehl2013theory,
  title={{The Theory and Practice of Reedy Categories}},
  author={Riehl, E. and Verity, D.},
  journal={{Theory and Applications of Categories}},
  volume={29},
  number={9},
  pages={256--301},
  year={2014}
}

@book{riehl2014categorical,
  title={{Categorical Homotopy Theory}},
  author={Riehl, E.},
  volume={24},
  year={2014},
  publisher={Cambridge University Press}
}

\end{document}